\definecolor{shadecolor}{gray}{0.875}
\newtheorem{thrm}{Theorem}[section]
\newtheorem{thrmx}{Theorem}
\newtheorem{corx}{Corollary}
\newtheorem{lem}[thrm]{Lemma}
\newtheorem{cor}[thrm]{Corollary}
\newtheorem{prop}[thrm]{Proposition}
\theoremstyle{definition}
\newtheorem{defn}[thrm]{Definition}
\newtheorem{exmple}[thrm]{Example}
\newtheorem{rmk}[thrm]{Remark}
\newtheorem{ques}[thrm]{Question}
\DeclareMathOperator{\Gr}{Gr}
\DeclareMathOperator{\nd}{nd}
\DeclareMathOperator{\Span}{span}
\DeclareMathOperator{\codim}{codim}
\DeclareMathOperator{\MSupp}{MSupp}
\DeclareMathOperator{\HL}{HL}
\DeclareMathOperator{\HR}{HR}
\DeclareMathOperator{\Herm}{Herm}
\DeclareMathOperator{\trdeg}{trdeg}
\title{Hard Lefschetz properties, complete intersections and numerical dimensions}
\author{Jiajun Hu and Jian Xiao}
\date{}
\begin{document}
\maketitle


\begin{abstract}
We study the positivity of complete intersections of nef classes. We first give a sufficient and necessary characterization on the complete intersection classes which have hard Lefschetz property on a compact complex torus, equivalently, in the linear case. In turn, this provides us new kinds of cohomology classes which have Hodge-Riemann property or hard Lefschetz property on an arbitrary compact K\"ahler manifold. We also give a complete characterization on when the complete intersection classes are non-vanishing on an arbitrary compact K\"ahler manifold. Both characterizations are given by the numerical dimensions of various partial summations of the given nef classes. As an interesting byproduct, we show that the numerical dimension endows any finite set of nef classes with a loopless polymatroid structure.
\end{abstract}

\tableofcontents

\section{Introduction}

In this paper, we work over the field of complex numbers $\mathbb{C}$, while corresponding results will be mentioned when they also hold over an arbitrary algebraically closed field $k$.


As summarized in \cite{huhICM2018}, given a mathematical object $X$ of ``dimension'' $d$, it is important to study whether a K\"ahler package holds on $X$. More precisely, sometimes one can construct from $X$ in a natural way a real or complex graded vector space (or even a graded algebra)
$$A(X) = \bigoplus_{q=0} ^d A^q (X),$$
together with a bilinear map $P: A(X)\times A(X) \rightarrow \mathbb{C}$ and a graded linear map $L: A^{\bullet} (X) \rightarrow A^{\bullet+1} (X)$ that is symmetric with respect to $P$. The linear operator $L$ usually comes from a convex cone $K(X)$ in the space of linear operators on $A(X)$.
For a nonnegative integer $q\leq d/2$, a \emph{K\"ahler package} on $X$ usually means the following properties:
\begin{description}
  \item[(PD)] The bilinear paring
  \begin{equation*}
    A^q (X) \times A^{d-q} (X) \rightarrow \mathbb{C},\ (\xi, \eta)\mapsto P(\xi, \eta)
  \end{equation*}
  in non-degenerate (the Poincar\'{e} duality for $X$)
  \item[(HL)] For any $L_1, ..., L_{d-2q}\in K(X)$, the linear map
  \begin{equation*}
    A^q (X) \rightarrow A^{d-q} (X),\ \xi \mapsto \left(\prod_{k=1} ^{d-2q} L_k \right) \cdot \xi
  \end{equation*}
  is bijective (the hard Lefschetz theorem for $X$).
  \item[(HR)] For any $L_1, ..., L_{d-2q+1}\in K(X)$, the quadratic form
  \begin{equation*}
    A^q (X) \times A^q (X) \rightarrow \mathbb{C},\ (\xi_1, \xi_2)\mapsto (-1)^q P\left(\xi_1, \left(\prod_{k=1} ^{d-2q} L_k \right)\overline{\xi_2}\right)
  \end{equation*}
  is positive definite on the kernel of the linear map
  \begin{equation*}
    A^q (X) \rightarrow A^{d-q +1} (X), \ \xi \mapsto \left(\prod_{k=1} ^{d-2q+1} L_k \right) \cdot \xi
  \end{equation*}
  (the Hodge-Riemann relation for $X$).
\end{description}
Sometimes, the set $K(X)$ is a convex cone of $A^1 (X)$, which is called the K\"ahler cone of $X$. In last decades, novel and exciting K\"ahler packages were discovered and have been playing key roles in algebra, combinatorics and geometry, see e.g. \cite{huhICM2018, huh2022combinatorics, williamECM} and the references therein.

In this paper, we are interested in the HL and HR parts, and study for which kinds of $L_1,...,L_{d-2q}$ the HL or HR property will hold. We focus on the model and original case -- the K\"ahler package on a compact K\"ahler manifold.

Let $X$ be a compact K\"ahler manifold of dimension $n$ and $\omega$ a K\"ahler class on $X$. As a fundamental piece of Hodge theory, for any integers $0\leq p, q \leq p+q \leq n$, the complete intersection class $$\Omega = \omega^{n-p-q} \in H^{n-p-q, n-p-q} (X, \mathbb{R}),$$
has the following properties:
\begin{description}
  \item [(HL)] The linear map $$\Omega: H^{p,q}(X, \mathbb{C})\rightarrow H^{n-q,n-p}(X, \mathbb{C}),\ \phi \mapsto \Omega \cdot \phi$$
      is an isomorphism.
  \item [(HR)] The quadratic form $Q$ on $H^{p, q}(X, \mathbb{C})$, defined by
  \begin{equation*}
    Q(\varphi_1, \varphi_2) = c_{p, q} \Omega \cdot  \varphi_1 \cdot \overline{\varphi_2}, \ \text{where}\ c_{p, q} = \mathrm{i}^{q-p} (-1)^{(p+q)(p+q+1)/2},
  \end{equation*}
  is positive definite on the primitive space $P^{p, q}(X, \mathbb{C})$ with respect to $(\Omega, \omega)$, i.e., $\phi \in P^{p, q}(X, \mathbb{C})$ if and only if $\Omega \cdot \omega\cdot \phi=0$.
\end{description}

Note that we arrive at a special case of the above abstract setting for $A(X)$ when $p=q$.

We call that \emph{the class $\Omega$ has HL property} and \emph{the pair $(\Omega, \omega)$ has HR property}.


By replacing the above $\Omega=\omega^{n-p-q}$ by an arbitrary cohomology class $\Omega \in H^{n-p-q, n-p-q} (X, \mathbb{R})$ and $\omega$ by an arbitrary $(1,1)$ class $\eta \in  H^{1, 1} (X, \mathbb{R})$,
it is interesting to study:
\begin{quote}
  When does the class $\Omega$ have HL property and when does the pair $(\Omega, \eta)$ have HR property? (In order to define the primitive space we need the class $\Omega$ coupling with an $(1,1)$ class $\eta$.)
\end{quote}

A complete answer seems unreachable at this moment, nevertheless, we first study a subclass of $\Omega$, coming from complete intersections:

\begin{ques}\label{ques HL}
Let $X$ be a compact K\"ahler manifold of dimension $n$, and let $\alpha_1,....,\alpha_{n-p-q}, \eta \in H^{1,1}(X, \mathbb{R})$, then under which assumptions does the complete intersection class $$\Omega=\alpha_1 \cdot.... \cdot\alpha_{n-p-q}$$ have HL property and does the pair $(\Omega, \eta)$ have HR property?
\end{ques}

\subsubsection*{Known results}
In \cite{DN06} (see also \cite{timorinMixedHRR, gromov1990convex} for related results) and \cite{cattanimixedHRR}, it was proved that given any K\"ahler classes $\omega_1, ..., \omega_{n-p-q+1}$,
the complete intersection class $$\Omega=\omega_1 \cdot ... \cdot \omega_{n-p-q}$$ has HL property,
and the pair $(\Omega, \omega_{n-p-q+1})$ has HR property.
This greatly generalizes the classical K\"ahler package to the mixed situation, in the sense that $\Omega$ can be a complete intersection of different K\"ahler classes. This result has important applications in high dimensional complex dynamics.

In the previous works of the second author \cite{xiaoHodgeIndex, xiaomixedHRR} (see also \cite{zhangHRR} for further refinement), inspired by Hodge index theorems deduced from complex Hessian equations and hyperbolic polynomials, it was realized that the classical positivity assumption -- being K\"ahler or ample -- can be weakened to still ensure that $\Omega$ has HL and HR properties.

More precisely, let $\omega$ be a K\"ahler class and $\alpha_1, ..., \alpha_{m-1}$ be $m$ positive $(1,1)$ classes -- whose definition will be recalled in Section \ref{sec pre}, then by \cite{xiaoHodgeIndex}, for $(p,q)=(1,1)$,
the complete intersection class $$\Omega = \omega^{n-m}\cdot \alpha_1\cdot ...\cdot \alpha_{m-2}$$
has HL property, and the pair $(\Omega, \alpha_{m-1})$ has HR property.
Note that an $m$ positive $(1,1)$ class ($m<n$) can be degenerate and even negative in certain directions, which is a positivity notion much weaker than being K\"ahler or ample. For general $(p,q)$, let $p, q, m$ be integers such that $0\leq p, q\leq p+q \leq m \leq n$. Let $\omega_1, ...,\omega_{n-m}$ be K\"ahler classes, and let $\alpha_1, ...,\alpha_{m-p-q+1}$ be semi-positive and $m$ positive, then by \cite[Theorem A and Remark 3.9]{xiaomixedHRR} we have
that the complete intersection class $$\Omega = \omega_1 \cdot ... \cdot \omega_{n-m}\cdot \alpha_1\cdot ...\cdot \alpha_{m-p-q}$$
has HL property, and that the pair $(\Omega, \alpha_{m-p-q+1})$ has HR property.
These results pushforward \cite{DN06} and \cite{cattanimixedHRR} to the mixed setting allowing degenerate positivity.

Still with degenerate positivity, in a more geometric setting \cite{decataldoLefsemismall} proved the following hard Lefschetz property: if $L$ is a line bundle on a complex projective manifold $X$ such that a positive power of $L$ is generated by its global sections, then the class $\Omega=c_1 (L) ^{n-p-q}$
has HL property if and only if $L$ is lef, in the sense that
$mL=f^*A$
for some projective semismall morphism $f: X\rightarrow Y$, where $A$ is an ample line bundle on $Y$. In particular, the pair $(\Omega, c_1 (L))$ has HR property. The map $f: X\rightarrow Y$ is called semismall if $\dim Y^k + 2k \leq \dim X$, where $Y^k = \{y\in Y | \dim f^{-1} (y) =k\}$. This has been playing important roles in the study of Hodge theory of algebraic maps.

More recently, it was proved in a series of works \cite{ross2019hodge, rossHR, rossHRkahler} that characteristic forms/classes of ample vector bundles also produce K\"ahler packages. Let $\omega$ be a K\"ahler class and $E$ an ample vector bundle on $X$, then for $(p,q)=(1,1)$,
the Schur class $$\Omega=s_\lambda (E), \ \text{with}\ |\lambda|=n-2,$$
has HL property and the pair $(\Omega, \omega)$ has HR property.
Here we only mention a prototype of their results, for more general statements, we refer the reader to the aforementioned references.

\subsubsection*{Our results}

When all the $(1,1)$ classes $\alpha_k$ are nef, we give a complete characterization on those $\Omega$ which have HL property, by using the positivity properties of various partial summations of $\alpha_1,...,\alpha_{n-p-q}$, on a compact complex torus, equivalently, in the linear case.

To present our result, we first introduce some notations.

Given a nef $(1,1)$ class $\alpha$ on a compact K\"ahler manifold $X$, its numerical dimension, denoted by $\nd(\alpha)$, is defined by
\begin{equation*}
  \nd(\alpha) = \max\{k\in \mathbb{N}| \alpha^k \neq 0\}.
\end{equation*}
Equivalently, fix a K\"ahler class $\omega$, $\nd(\alpha)$ is the largest integer $k$ such that $\alpha^k \cdot \omega^{n-k} > 0$.

We denote the set of indices $\{1,2,...,m\}$ by $[m]$. Given finite vectors $v_1, ...,v_m$ in a vector space $V$, for $I\subset [m]$ write
$$v_I = \sum_{i\in I} v_i. $$
For a subset $I\subset [m]$, we denote the cardinality of $I$ by $|I|$.

\begin{thrmx} \label{hardlef tori}
Let $X=\mathbb{C}^n / \Gamma$ be a compact complex torus of dimension $n$ and $0\leq p,q\leq p+q\leq n$. Let $\alpha_1,...,\alpha_{n-p-q}, \eta\in H^{1,1} (X, \mathbb{R})$ be nef classes on $X$. Denote $\Omega=\alpha_1\cdot...\cdot\alpha_{n-p-q}$. Then for HL property, the following statements are equivalent:
\begin{enumerate}
  \item the intersection class $\Omega$ has HL property;
  \item for any subset $I \subset [n-p-q]$, $\nd(\alpha_I)\geq |I|+p+q$.
\end{enumerate}
For HR property, the following statements are equivalent:
\begin{enumerate}
  \item the pair $(\Omega, \eta)$ has HR property for $\eta$ with $\nd(\eta)\geq p+q$;
  \item for any subset $I \subset [n-p-q]$, $\nd(\alpha_I)\geq |I|+p+q$.
\end{enumerate}

\end{thrmx}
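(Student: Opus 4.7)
The plan is to reduce the theorem to a linear-algebra problem on $V=\mathbb{C}^n$, exploiting translation invariance of cohomology on the torus. One has $H^{p,q}(X,\mathbb{C})\cong\wedge^p V^*\otimes\wedge^q\overline{V^*}$, nef $(1,1)$-classes correspond to semipositive Hermitian forms $h_i$ on $V$, cup product becomes wedge of constant forms, and $\nd(\alpha_I)=\operatorname{rank}(\sum_{i\in I}h_i)$. The function $I\mapsto\nd(\alpha_I)$ is monotone and submodular, yielding the polymatroid structure announced in the abstract. The theorem becomes a statement about the wedge action of $\prod_i h_i$ on the exterior algebra of $V^*\oplus\overline{V^*}$.

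For the necessity ``$(1)\Rightarrow(2)$'' in both HL and HR, I would argue by contraposition. Assume $r:=\nd(\alpha_I)<|I|+p+q$ for some nonempty $I$. If $|I|>r$, then $\prod_{i\in I}h_i$ already vanishes on $V/\ker(\sum_{i\in I}h_i)$ (dimension $r<|I|$), so $\Omega=0$ and HL/HR fail trivially. Otherwise set $W=\ker(\sum_{i\in I}h_i)$; each $h_i$ with $i\in I$ descends to a K\"ahler form $\bar h_i$ on $V/W$ of dimension $r$. If $\max(p,q)>r$, construct $\phi=\xi\wedge\bar\eta$ on $V$ with $\xi$ containing the maximal $r$ factors from the annihilator $W^\circ\subset V^*$ (and the rest from a complement), forcing $\Omega_I\wedge\phi=0$ since $\wedge^{|I|+r}W^\circ=0$. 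When $p,q\leq r$, apply the mixed hard Lefschetz of Dinh--Nguyen and Cattani \cite{DN06,cattanimixedHRR} on $V/W$: the mixed Lefschetz decomposition with respect to the $|I|$ K\"ahler classes $\bar h_i$ shows that $\prod \bar h_i\cdot\colon H^{p,q}(V/W)\to H^{|I|+p,|I|+q}(V/W)$ has nontrivial kernel whenever $p+q>r-|I|$. Pulling such a kernel element back along $V\to V/W$ yields $\phi\in H^{p,q}(X)\setminus\{0\}$ with $\Omega\cdot\phi=0$; since the same $\phi$ lies in $P^{p,q}(\Omega,\eta)$ with $Q(\phi,\bar\phi)=0$, HR also fails.

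For the sufficiency ``$(2)\Rightarrow(1)$'' I would combine K\"ahler approximation with a radical computation linking HR to HL. Fix a K\"ahler class $\omega$ and set $\alpha_i(t)=\alpha_i+t\omega$, $\eta(t)=\eta+t\omega$ for $t>0$; each is K\"ahler, so \cite{cattanimixedHRR} gives HR for $(\Omega(t),\eta(t))$. A short Lefschetz-decomposition calculation shows that the radical of the limit Hermitian form $Q_0$ on $P^{p,q}_0=\ker(\Omega\eta\cdot)$ coincides with $\ker(\Omega\cdot)$: if $Q_0(\phi,\bar\psi)=0$ for every $\psi\in P^{p,q}_0$, then expanding $\psi\in H^{p,q}$ via $\eta$-Lefschetz produces $\int \Omega\phi\bar\psi=0$ for all $\psi$, whence $\Omega\phi=0$. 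Combined with positive semi-definiteness of $Q_0$ on $P^{p,q}_0$ inherited by continuity from $Q_t>0$, this reduces HR to HL. It remains to prove HL under (2). In contrapositive form, one supposes HL fails, so that there is $\phi\in\ker(\Omega\cdot)\setminus\{0\}$, and must exhibit a deficient index set $I$ with $\nd(\alpha_I)<|I|+p+q$, inverting the necessity construction via a polymatroid/Hall-type selection on the subspaces $\operatorname{im}(h_i)\subset V$ paired with the linear-algebraic support of $\phi$.

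The hardest step is this final extraction. Unlike the necessity direction, where $I$ is chosen first and $\phi$ is built explicitly from $\ker(\sum_{i\in I}h_i)$, here the construction must be inverted and $I$ produced from the support data of $\phi$. This seems to require the polymatroid framework developed elsewhere in the paper together with a careful analysis of how $\phi$ decomposes against the images of the $h_i$. The remaining ingredients---the linear-algebra reduction, the explicit kernel constructions in the necessity direction, the radical calculation, and the continuity of the HR form---are either standard or follow from known mixed Hodge--Riemann results.
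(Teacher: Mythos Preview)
Your proposal has a genuine gap in the sufficiency direction, and a minor error in the necessity direction.

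\textbf{The main gap: proving HL under (2).} You correctly identify this as ``the hardest step,'' but your suggested approach---extracting a deficient index set $I$ from a kernel element $\phi$ via ``polymatroid/Hall-type selection''---is not carried out and is not how the paper proceeds. The paper's argument is an induction on $n$ of the form $\HR_{n-1}\Rightarrow\HL_n\Rightarrow\HR_n$. The nontrivial step $\HR_{n-1}\Rightarrow\HL_n$ goes by restricting to hyperplanes: given $\phi$ with $\Omega\wedge\phi=0$, one diagonalizes $\alpha_{n-p-q}$ and shows that for a Zariski-dense set of hyperplanes $H$ (those avoiding certain vectors chosen from the kernels $\ker\alpha_I$), the restricted data $({\alpha_1}_{|H},\dots,{\alpha_{n-p-q}}_{|H})$ again satisfy the rank conditions on $H\cong\mathbb{C}^{n-1}$, so $\HR_{n-1}$ forces $Q_H(\phi_{|H},\phi_{|H})\ge 0$. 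Summing over an orthogonal frame adapted to $\alpha_{n-p-q}$ gives $Q_H(\phi_{|H},\phi_{|H})=0$ on enough hyperplanes, whence $\phi_{|H}=0$ by $\HR_{n-1}$ and finally $\phi=0$ since $\dim\ker\alpha_{n-p-q}\le n-1-p-q$. This hyperplane-restriction idea is entirely absent from your proposal, and the polymatroid machinery in the paper is a \emph{consequence} of the theorem, not an input to its proof. Your deformation argument reducing HR to HL (the radical computation) is essentially the paper's $\HL_n\Rightarrow\HR_n$ step, but note that the Lefschetz decomposition you invoke already requires $\HL_n$ (cf.\ the paper's Lemma~\ref{HLtoLD}), so the reduction is fine only once HL is in hand.

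\textbf{Minor error in necessity.} You claim that each $h_i$ with $i\in I$ descends to a \emph{K\"ahler} form $\bar h_i$ on $V/W$, where $W=\ker(\sum_{i\in I}h_i)=\bigcap_{i\in I}\ker h_i$. This is false: only the sum $\sum_{i\in I}\bar h_i$ is positive definite on $V/W$; an individual $\bar h_i$ may have $\ker h_i\supsetneq W$ and hence be merely semipositive. So you cannot invoke \cite{DN06,cattanimixedHRR} on $V/W$ as stated. The paper avoids this entirely: after diagonalizing $\alpha_I$ so that it lives in the first $r=|I|+p+q-1$ coordinates, the map $\alpha_1\wedge\cdots\wedge\alpha_{|I|}$ sends the span $H_1$ of $dz_K\wedge d\bar z_L$ with $K,L\subset[r]$, $|K|=p$, $|L|=q$, into the analogous span $H_2$ with $|K|=p+|I|$, $|L|=q+|I|$, and a direct binomial comparison gives $\dim H_1>\dim H_2$, so the map already fails to be injective on $H_1$. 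No Hodge theory is needed here.
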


Theorem \ref{hardlef tori} is closely related to Alexandrov's seminal work \cite{alexandroff1938theorie} and Panov's further generalization \cite{Panov1987ONSP}, see Remark \ref{panov summary}. Indeed, Alexandrov's and Panov's related results are special cases of Theorem \ref{hardlef tori} when $p=q=1$, $p=q=0$.

As a consequence of Theorem \ref{hardlef tori}, we obtain new kinds of cohomology classes on an arbitrary compact K\"ahler manifold, which have HL and HR properties.
The following result generalizes \cite{xiaomixedHRR}.

\begin{corx}\label{cor kahler hrr intro}
Let $X$ be a compact K\"ahler manifold of dimension $n$ and $0\leq p,q\leq p+q\leq n$. Let $\alpha_1,...,\alpha_{n-p-q}, \eta \in H^{1,1}(X,\mathbb{R})$ be nef classes on $X$. Denote $\Omega=\alpha_1\cdot...\cdot\alpha_{n-p-q}$.
Assume that there exists a smooth semi-positive representative $\widehat{\alpha}_i$ in each class $\alpha_i$, such that for any subset $I \subset [n-p-q]$, $\widehat{\alpha}_I$ is $|I|+p+q$ positive in the sense of forms, and that there exists a smooth semi-positive representative $\widehat{\eta}$ in $\eta$, such that $\widehat{\eta}$ is $p+q$ positive, then
\begin{itemize}
  \item the complete intersection class $\Omega$ has HL property;
  \item the pair $(\Omega, \eta)$ has HR property.
\end{itemize}
\end{corx}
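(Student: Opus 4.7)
The plan is to reduce Corollary~B to Theorem~A (the torus, equivalently linear, case) by combining a pointwise linearization with a Kähler perturbation/limit argument. The conceptual bridge is that a compact complex torus is the geometric incarnation of the pointwise/linear-algebraic structure on any compact Kähler manifold, and the $m$-positivity condition on smooth $(1,1)$-forms is exactly what matches the numerical-dimension condition $\nd(\alpha_I)\geq |I|+p+q$ appearing in Theorem~A.

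First I would linearize the hypothesis. At a point $x\in X$, the smooth semi-positive $(1,1)$-form $\widehat{\alpha}_I(x)$ is a semi-positive Hermitian form on $T_xX$, and being $|I|+p+q$ positive in the sense of forms means exactly that this Hermitian form has at least $|I|+p+q$ strictly positive eigenvalues. Extending $\widehat{\alpha}_I(x)$ to a translation-invariant $(1,1)$-form on the linear complex torus $T_xX/\Lambda$ for any lattice $\Lambda$, this is precisely the condition $\nd(\widehat{\alpha}_I(x))\geq |I|+p+q$ on that torus; the analogous statement holds for $\widehat{\eta}(x)$. Thus the hypothesis of Corollary~B is the pointwise manifestation, at every $x\in X$, of the numerical-dimension hypothesis in Theorem~A, so the pointwise HL and HR statements for translation-invariant forms on each $T_xX$ are available.

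Next I would deform to the Kähler case and take a limit. For a fixed Kähler class $\omega$ and $\varepsilon>0$, the classes $\alpha_i^\varepsilon=\alpha_i+\varepsilon\omega$ and $\eta^\varepsilon=\eta+\varepsilon\omega$ are Kähler with smooth representatives $\widehat{\alpha}_i+\varepsilon\omega$ and $\widehat{\eta}+\varepsilon\omega$. The mixed Hodge-Riemann theorem of \cite{DN06,cattanimixedHRR} gives HL for $\Omega^\varepsilon=\alpha_1^\varepsilon\cdot\ldots\cdot\alpha_{n-p-q}^\varepsilon$ and HR for the pair $(\Omega^\varepsilon,\eta^\varepsilon)$. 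Letting $\varepsilon\to0$, HL at the limit reduces to injectivity of multiplication by $\Omega$ on $H^{p,q}(X,\mathbb{C})$. This injectivity is what the pointwise step delivers: a harmonic representative $\varphi$ of a class annihilated by $\Omega$ should satisfy a pointwise positivity inequality (from the linear HR delivered by Theorem~A) whose integral is forced to vanish, yielding $\varphi\equiv 0$. Once HL persists at $\varepsilon=0$, the HR Hermitian form remains non-degenerate along the deformation, so by continuity of signature it stays positive definite on primitives at $\varepsilon=0$.

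The main obstacle is executing the $\varepsilon\to0$ transfer of HL rigorously: a harmonic $(p,q)$-form annihilated by $\Omega$ need not be pointwise primitive for the linearized data, so the pointwise positivity from Theorem~A has to be wielded with care, either through a $\partial\bar\partial$-lemma/Bochner-type argument or through weak compactness of the perturbed harmonic forms. An alternative, likely cleaner, route which I expect the authors to follow is to stay at the cohomological level throughout: use Theorem~A only to certify the required non-degeneracy of limiting intersection pairings, and combine this with the continuity argument of \cite{DN06,cattanimixedHRR} on signatures of mixed HR forms to transport both HL and the HR positivity across $\varepsilon=0$.
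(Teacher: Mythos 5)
Your pointwise linearization of the hypothesis is exactly right, and you correctly locate the crux: a harmonic (or any chosen) representative of a cohomologically primitive class is not pointwise primitive for $\widehat{\Omega}\wedge\widehat{\eta}$, so the pointwise Hodge--Riemann inequality from Theorem~\ref{hardlef tori} cannot be applied to it directly. But your proposal stops at naming this obstacle rather than resolving it, and the main route you propose --- perturb by $\varepsilon\omega$, invoke \cite{DN06,cattanimixedHRR} for $\varepsilon>0$, and pass to the limit --- cannot close the argument on its own: hard Lefschetz is an open, not a closed, condition, so the limit of the $\varepsilon>0$ statements only yields semi-definiteness of the limiting pairing, and injectivity of $\Omega$ at $\varepsilon=0$ is precisely what must be produced by some additional mechanism. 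The ``continuity of signature'' step is fine for upgrading HL to HR \emph{once HL at $\varepsilon=0$ is known} (this is how the paper argues in the linear setting), but it presupposes the very non-degeneracy that is in question.

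The missing idea, which is the entire content of the local-to-global reduction following \cite{DN06}, is to correct the representative inside its cohomology class so that it becomes pointwise primitive: one solves the $dd^c$-equation
\begin{equation*}
  \widehat{\Omega}\wedge\widehat{\eta}\wedge dd^c\widehat{F} \;=\; \widehat{\Omega}\wedge\widehat{\eta}\wedge\widehat{\Phi},
\end{equation*}
which is a non-degenerate (elliptic) equation precisely because the pointwise HL of Theorem~\ref{hardlef tori} makes wedging with $\widehat{\Omega}\wedge\widehat{\eta}$ a pointwise isomorphism under the stated positivity hypotheses. Then $\widehat{\Phi}-dd^c\widehat{F}$ represents the same class, is pointwise primitive, and the pointwise HR inequality applies to it; integrating and using Stokes gives $Q(\Phi,\Phi)=c_{p,q}\int\widehat{\Omega}\wedge(\widehat{\Phi}-dd^c\widehat{F})\wedge\overline{(\widehat{\Phi}-dd^c\widehat{F})}\geq 0$ with equality forcing pointwise vanishing, hence $\Phi=0$. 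This yields global HR directly (no deformation needed), and global HL follows since $\Omega\cdot\Phi=0$ makes $\Phi$ primitive with $Q(\Phi,\Phi)=0$. Your phrase ``$\partial\bar\partial$-lemma/Bochner-type argument'' gestures in this direction, but without identifying the $dd^c$-correction and why it is solvable, the proof does not go through.
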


\begin{rmk}
Similar to Theorem \ref{hardlef tori}, we expect a complete characterization of HL property for complete intersections of nef classes on an arbitrary compact K\"ahler manifold. The requirement for numerical dimensions of various partial summations should be necessary. However, it is clear that this is not sufficient. The sufficiency is more subtle, we hope to address this in a future work.
\end{rmk}

We consider HL and HR properties as positivity properties of $\Omega$.

Besides HL or HR property, we also study a weaker positivity notion, that is, the non-vanishing property of the complete intersection class $$\Omega=\alpha_1\cdot...\cdot\alpha_m$$
when each $\alpha_k\in H^{1,1} (X, \mathbb{R})$ is a nef class.

Applying Theorem \ref{hardlef tori} to the case $p=q=0$, for nef classes $\alpha_1,...,\alpha_{n-p-q}, \eta\in H^{1,1} (X, \mathbb{R})$ on a compact complex torus the following are equivalent:
\begin{enumerate}
  \item $\alpha_1\cdot...\cdot\alpha_n >0$;
  \item for any subset $I \subset [n]$, $\nd(\alpha_I)\geq |I|$.
\end{enumerate}

It is natural to expect that this also holds on an arbitrary compact K\"ahler manifold.
Indeed, we obtain the following complete characterization.

\begin{thrmx}\label{intro thrm interPosi}
Let $X$ be a compact K\"ahler manifold of dimension $n$ and $1\leq m\leq n$. Let $\alpha_1,...,\alpha_m\in H^{1,1} (X, \mathbb{R})$ be nef classes on $X$. Denote $\Omega=\alpha_1\cdot...\cdot\alpha_m$, then the following statements are equivalent:
\begin{enumerate}
  \item the intersection class $\Omega \neq 0 $ in $H^{m,m} (X, \mathbb{R})$;
  \item for any subset $I \subset [m]$, $\nd(\alpha_I) \geq |I|$.
\end{enumerate}

\end{thrmx}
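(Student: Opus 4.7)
The direction (1)$\Rightarrow$(2) is a multinomial expansion. Fix a K\"ahler class $\omega$; since $\Omega$ is an intersection of nef classes, $\Omega \neq 0$ in $H^{m,m}(X,\mathbb{R})$ is equivalent to $\Omega \cdot \omega^{n-m} > 0$. For any $I \subset [m]$, I would expand
\[
\alpha_I^{|I|} \cdot \prod_{j \in [m]\setminus I} \alpha_j \cdot \omega^{n-m}
\]
via the multinomial theorem. Each summand is an intersection of nef classes against $\omega^{n-m}$, hence non-negative, and the distinguished monomial in which every $\alpha_i$ with $i\in I$ appears exactly once carries coefficient $|I|!$ and contributes $|I|! \cdot \Omega \cdot \omega^{n-m}>0$. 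The whole intersection is therefore strictly positive, which forces $\alpha_I^{|I|} \neq 0$ and hence $\nd(\alpha_I) \geq |I|$.

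For the main direction (2)$\Rightarrow$(1), I would first reduce to the top-degree case $m=n$. Fixing a K\"ahler $\omega$, set $\beta_i := \alpha_i$ for $i\leq m$ and $\beta_i:=\omega$ for $m<i\leq n$, so that $\Omega \neq 0 \Leftrightarrow \beta_1\cdots\beta_n > 0$. Condition (2) transfers to the $\beta_i$: for $J \subset [n]$ meeting $\{m+1,\ldots,n\}$ the class $\beta_J$ is K\"ahler and $\nd(\beta_J) = n \geq |J|$, while for $J \subset [m]$ the hypothesis is unchanged. So we may assume $m=n$ and aim to show $\alpha_1 \cdots \alpha_n > 0$.

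Assume for contradiction that $\alpha_1\cdots\alpha_n = 0$. The key tool is the Khovanskii--Teissier inequality for nef classes,
\[
(\alpha \cdot \beta \cdot \gamma_1 \cdots \gamma_{n-2})^2 \geq (\alpha^2 \cdot \gamma_1 \cdots \gamma_{n-2})(\beta^2 \cdot \gamma_1 \cdots \gamma_{n-2}),
\]
each application of which converts a single vanishing into a disjunction of two further vanishings of mixed monomials $\prod_i \alpha_i^{k_i}$ with $\sum k_i = n$. The plan is to iterate KT and use the numerical-dimension hypotheses $\nd(\alpha_I)\geq |I|$, over every subset $I$, to rule out at each step the branch incompatible with (2). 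The target is to drive every multinomial term of $\alpha_{[n]}^n = (\alpha_1+\cdots+\alpha_n)^n$ to zero, contradicting $\nd(\alpha_{[n]}) \geq n$. This is most naturally organized as an induction on $n$, stripping off at each stage an index of minimal numerical dimension.

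The hard part will be exactly this combinatorial propagation. KT supplies only a two-branch disjunction, so one needs a selection rule that never runs out of room: the numerical-dimension hypotheses must be strong enough to pick the correct branch at every step. The right framework is the submodularity of $I \mapsto \nd(\alpha_I)$ under union and intersection of subsets -- precisely the loopless polymatroid structure the paper identifies as a byproduct -- which should block the "wrong" branch each time. Making the bookkeeping into a clean induction rather than a tree of cases is the main technical subtlety. An alternative route through Theorem~\ref{hardlef tori} does not appear directly available, since there is no canonical way to realize arbitrary nef classes on a compact K\"ahler manifold as pullbacks from a torus preserving numerical dimensions, so the argument has to be carried out intrinsically on $X$.
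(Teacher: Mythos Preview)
Your (1)$\Rightarrow$(2) direction is fine and matches the paper. The gap is in (2)$\Rightarrow$(1): the paper does not run a branching KT argument pruned by submodularity, and your proposal would not close as stated.

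First, the tool you plan to invoke is circular. In the paper, submodularity of $I\mapsto\nd(\alpha_I)$ on a compact K\"ahler manifold is established only for classes with \emph{smooth semi-positive} representatives (Proposition~\ref{rank on Kahler}), and that proof \emph{uses} Theorem~\ref{intro thrm interPosi}. For arbitrary nef classes the paper proves submodularity only in the projective case via reverse Khovanskii--Teissier inequalities (Theorem~\ref{polymatroid}); the general K\"ahler case is left as an expectation. So you cannot appeal to the polymatroid structure to prove this theorem.

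Second, even setting that aside, the KT \emph{inequality} only gives you a disjunction of vanishings, and you never explain how the hypotheses $\nd(\alpha_I)\geq |I|$ single out the correct branch at each step. The paper avoids this combinatorial morass by using a stronger tool: the \emph{equality} characterization coming from a Hodge index theorem for complete intersections of nef classes (Proposition~\ref{luotie}). The point is that if $Q(\alpha,\beta)^2 = Q(\alpha,\alpha)Q(\beta,\beta)$ with $Q=\Omega\cdot(-)\cdot(-)$, then $\Omega\cdot\alpha$ and $\Omega\cdot\beta$ are \emph{proportional} as $(n-1,n-1)$ classes (Corollary~\ref{proportional}), not merely that one of two numbers vanishes. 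This proportionality lets one replace $\alpha_k$ by $\alpha_{[m]}$ one slot at a time: assuming (\ref{eq otherwise}) $\alpha_1\cdots\alpha_{m-1}\cdot\alpha_{[m]}\cdot\omega^{n-m}=0$, the paper iterates Corollary~\ref{proportional} to force $\alpha_{[m]}^m\cdot\omega^{n-m}=0$, contradicting $\nd(\alpha_{[m]})\geq m$; a second pass then rules out $\alpha_1\cdots\alpha_m\cdot\omega^{n-m}=0$. The induction is on $m$, not on $n$, and no branching ever occurs. This proportionality step is the idea you are missing.
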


Similar result also holds for compact K\"ahler varieties (see Corollary \ref{kahler variety interspos}).
In the K\"ahler setting, the above $\Omega \neq 0$ is equivalent to that $\Omega$ has a representative which is a non-zero positive current of bidegree $(m,m)$.

\begin{rmk}
The characterization in Theorem \ref{intro thrm interPosi} structurally looks similar to the characterization of hard Lefschetz property. Indeed, our proof applies a Hodge index type theorem,
and the same criterion holds on a smooth projective variety over an arbitrary algebraically closed field.
\end{rmk}

Theorem \ref{intro thrm interPosi} is analogous to the convexity result
\cite[Theorem 5.1.8]{schneiderBrunnMbook}. Given convex bodies $K_1, ..., K_n$ in $\mathbb{R}^n$, the following statements are equivalent:
\begin{itemize}
  \item The mixed volume $V(K_1, ...,K_n)>0$;
  \item There are segments $S_i \subset K_i$, $1\leq i\leq n$, with linearly independent directions;
  \item $\dim K_I \geq |I|$ for any $I\subset [n]$.
\end{itemize}

Among the applications of Theorem \ref{intro thrm interPosi}, we use it to study when the multidegrees are positive in the K\"ahler setting (see Theorem \ref{multidegree}), extending related results in \cite{CCLMZ20}, where multidegrees for algebraic varieties and their connection with algebra and combinatorics were systematically explored.

In the algebraic setting, the characterization of multidegrees is used to show that the support of multidegrees is a discrete polymatroid, described as the integer points in a generalized permutohedron. These results are closely related to submodular functions.

We relate the characterization of positivity of multidegrees of a K\"ahler submanifold to numerical dimensions. Combining with the submodularity property in the algebraic setting, we are motivated to ask whether the numerical dimension of nef classes is a submodular function.

We prove the following general result.

\begin{thrmx}\label{polymatroid}
Let $X$ be a complex projective manifold of dimension $n$, then for any three nef classes $A,B,C \in H^{1,1}(X, \mathbb{R})$ on $X$, we always have
\begin{equation*}
  \nd(A+B+C) + \nd(C)\leq \nd(A+C)+\nd(B+C).
\end{equation*}

As a consequence, for any finite set of nef classes $E=\{B_1, ..., B_m\}$ on $X$, for $I\subset [m]$ set $$r(I)=\nd(B_I)$$ with the convention that $r(\emptyset)=0$, then the function $r(\cdot)$ endows with $E$ a loopless polymatroid structure.

\end{thrmx}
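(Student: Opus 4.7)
The plan is to reduce the main three-class inequality to the positivity criterion of Theorem \ref{intro thrm interPosi} via an auxiliary subadditivity property of the numerical dimension, and then to derive the polymatroid consequence by standard combinatorial manipulations.

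I will first establish the subadditivity $\nd(\xi+\eta)\le \nd(\xi)+\nd(\eta)$ for nef classes $\xi,\eta$. Setting $k=\nd(\xi+\eta)$ and fixing an ample class $H$, we have $(\xi+\eta)^k\cdot H^{n-k}>0$, and the multinomial expansion $\sum_{i=0}^{k}\binom{k}{i}\,\xi^i\eta^{k-i}\cdot H^{n-k}$ is a sum of nonnegative intersection numbers; hence some $\xi^{i_0}\eta^{k-i_0}\cdot H^{n-k}>0$, which forces $\xi^{i_0}\neq 0$ and $\eta^{k-i_0}\neq 0$ in cohomology, giving $\nd(\xi)+\nd(\eta)\ge i_0+(k-i_0)=k$.

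For the three-class inequality, set $a=\nd(A+C)$, $b=\nd(B+C)$, $c=\nd(C)$, $d=\nd(A+B+C)$. Subadditivity applied to $A+B$ and $C$ gives $\nd(A+B)\ge d-c$. I will then apply Theorem \ref{intro thrm interPosi} to the multiset of $n$ nef classes consisting of $d-c$ copies of $A+B$, $c$ copies of $C$, and $n-d$ copies of $H$: using the scaling invariance $\nd(s\alpha+t\beta)=\nd(\alpha+\beta)$ for $s,t>0$, every sub-sum reduces (up to positive rescaling) to a sum over the distinct classes $A+B$, $C$, $H$; any sub-sum containing $H$ has numerical dimension $n$, so the only nontrivial condition is $\nd(A+B)\ge d-c$, which holds. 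Hence $(A+B)^{d-c}\cdot C^c\cdot H^{n-d}>0$. Expanding $(A+B)^{d-c}$ binomially produces some term $A^iB^jC^c\cdot H^{n-d}>0$ with $i+j=d-c$, and a second application of Theorem \ref{intro thrm interPosi} to this product yields in particular the constraints $i+c\le a$ and $j+c\le b$, whence $d-c=i+j\le a+b-2c$, i.e., $d+c\le a+b$.

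For the polymatroid consequence, set $r(I)=\nd(B_I)$ with $r(\emptyset)=0$; then $r$ is integer-valued. Monotonicity follows because for $I\subset J$ the binomial expansion of $(B_I+B_{J\setminus I})^k\cdot H^{n-k}$ is a sum of nonnegative terms dominating $(B_I)^k\cdot H^{n-k}$. Submodularity is equivalent to the diminishing-returns inequality $r(I\cup\{i,j\})+r(I)\le r(I\cup\{i\})+r(I\cup\{j\})$ for distinct $i,j\notin I$, which is exactly the three-class inequality applied to $A=B_i$, $B=B_j$, $C=B_I$; looplessness $r(\{i\})=\nd(B_i)\ge 1$ holds for each nonzero $B_i$. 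The main subtlety will lie in the first application of Theorem \ref{intro thrm interPosi}: I must carefully exploit scaling invariance to collapse the combinatorial list of conditions on all subsets of the $n$-multiset to the single nontrivial inequality $\nd(A+B)\ge d-c$, and to verify that every sub-sum involving $H$ trivializes because it becomes K\"ahler.
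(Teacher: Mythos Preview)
Your argument is correct, and it is genuinely different from the paper's. The paper proves the inequality $\nd(A+B+C)+\nd(C)\le\nd(A+C)+\nd(B+C)$ by the reverse Khovanskii--Teissier inequality (Lemma~\ref{reverse KT}): it fixes a very ample $\omega$, picks an irreducible $V\in|\omega^{n-(k+l-m+1)}|$, interpolates $D=C+\varepsilon\omega$, and shows via an asymptotic estimate as $\varepsilon\to 0$ that every monomial $A^{s_1}B^{s_2}C^{s_3}\cdot[V]$ with $s_1+s_2+s_3=k+l-m+1$ vanishes. Your route is instead a two-fold application of Theorem~\ref{intro thrm interPosi}: first in the sufficiency direction to produce $(A+B)^{d-c}\cdot C^{c}\cdot H^{n-d}>0$, then (after a binomial expansion) in the necessity direction to extract $i+c\le\nd(A+C)$ and $j+c\le\nd(B+C)$. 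This is more self-contained within the paper (it does not import the reverse KT lemma), and---more strikingly---it does not use projectivity at all: replacing $H$ by any K\"ahler class, every step goes through verbatim on an arbitrary compact K\"ahler manifold. This actually resolves the expectation the paper records immediately after the proof of Theorem~\ref{rank}, where the authors note that the K\"ahler case would follow from a relative reverse KT inequality they do not have. The paper's approach, on the other hand, has the advantage of transporting directly to varieties over any algebraically closed field via \cite{jiang2021algebraicKT}; your approach does too, through Remark~\ref{GenOfThrmInterPosi}, so this is a wash.

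Two minor points. In Step~4 you should note the degenerate cases $i=0$ or $c=0$ (where the subset sum $iA+cC$ does not literally have numerical dimension $\nd(A+C)$); they are harmless since $\nd(A+C)\ge\max(\nd(A),\nd(C))$. For the polymatroid consequence, the paper applies the three-class inequality directly to $(B_I,B_J,B_{I\cap J})$ using the scaling invariance $\nd\bigl(\sum s_i\alpha_i\bigr)=\nd\bigl(\sum\alpha_i\bigr)$, rather than reducing to the diminishing-returns form; your reduction is equivalent and equally valid. The looplessness claim tacitly assumes each $B_i\neq 0$, as in Corollary~\ref{polymatroid2}.
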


\begin{rmk}
The analogous result also holds on a smooth projective variety over an arbitrary algebraically closed field. We expect that Theorem \ref{polymatroid} holds on an arbitrary compact K\"ahler manifold, and establish a special case for semi-positive classes (see Proposition \ref{rank on Kahler}).
\end{rmk}

We apply this kind of results to show that discrete polymatroids appear naturally in the study of positivity of multidegrees in the K\"ahler setting and in the study of HL property in the linear setting (see Corollary \ref{support}, Proposition \ref{polym HL linear}).

\subsection*{Organization} This paper is organized as follows. We present some basic definitions and notations in Section \ref{sec pre}. In Section \ref{sec hardlef}, we give a sufficient and necessary characterization on complete intersection classes of nef classes to have HL property on a compact complex torus, and discuss its some applications. In Section \ref{sec nonvanishing}, we give a complete characterization on the non-vanishing of complete intersections of nef classes on a compact K\"ahler manifold. Finally, in Section \ref{sec polymatroid} we discuss various combinatorial structures closely related to the previous sections.

\subsection*{Acknowledgements}
We would like to thank Botong Wang for helpful comments.
This work is supported by the National Key Research and Development Program of China (No. 2021YFA1002300) and National Natural Science Foundation of China (No. 11901336).

\section{Preliminaries}\label{sec pre}

In this section, we present some basic definitions, notations and results that will be used in the paper.

We denote by
\begin{itemize}
  \item $\Lambda^{p,q}(\mathbb{C}^n)$ the space of $(p, q)$ forms on $\mathbb{C}^n$ with constant coefficients;
  \item  $\Lambda^{p,p}_{\mathbb{R}}(\mathbb{C}^n) \subset \Lambda^{p,p}(\mathbb{C}^n)$ the subspace of real $(p, p)$ forms.
\end{itemize}
Fix a coordinate system $(z_1,...,z_n)$ on $\mathbb{C}^n$, a real $(1,1)$ form
$$\alpha = \mathrm{i} \sum_{i,j=1}^n a_{ij} dz_i \wedge d\overline{z}_j$$
is called
\begin{itemize}
  \item \emph{semi-positive} if the $n\times n$ Hermitian matrix $[a_{ij}]$ is semi-positive;
  \item \emph{K\"ahler} if $[a_{ij}]$ is positive definite.
\end{itemize}
We sometimes identify the $(1,1)$ form $\alpha$ with the matrix $[a_{ij}]$, for example, $\ker \alpha$ means the kernel of the linear transform associated to $[a_{ij}]$.


Fix a K\"ahler metric $\omega$ with constant coefficients on $\mathbb{C}^n$. A real $(1,1)$ form $\alpha \in \Lambda^{1,1}_{\mathbb{R}}(\mathbb{C}^n)$ is called
\begin{itemize}
  \item \emph{$m$ positive} with respect to $\omega$, if
  $\alpha^k \wedge \omega^{n-k} >0$
for every $k$ with $1\leq k\leq m$.
\end{itemize}
This kind of positivity notion arises naturally in the study of Hessian equations.
Assume that
$$\omega=\mathrm{i}\sum_{i=1} ^n dz_i \wedge d\overline{z}_i,\ \alpha= \mathrm{i} \sum_{i=1} ^n \lambda_i dz_i \wedge d\overline{z}_i,$$
then $\alpha$ is $m$ positive with respect to $\omega$ if and only if
$$s_k (\lambda_1,...,\lambda_n) >0,\ 1\leq k\leq m,$$
where $s_k$ is the $k$-th elementary symmetric polynomial with $n$ variables.

The form $\alpha$ is called
\begin{itemize}
  \item \emph{exactly $m$ positive} if it is $m$ positive but not $m+l$ positive for any $l\geq 1$.
\end{itemize}

It is easy to see:

\begin{lem}
Assume that $\alpha \in \Lambda^{1,1}_{\mathbb{R}}(\mathbb{C}^n)$ is semi-positive, then
$\alpha$ is $m$ positive if and only if one of the following equivalent conditions holds:

\begin{enumerate}
  \item $\alpha$ has no less than $m$ positive eigenvalues;
  \item $\alpha ^m \wedge \omega^{n-m} > 0$;
  \item $\codim \ker \alpha \geq m$, where
  \begin{align*}
    \ker \alpha &=\{v\in \mathbb{C}^n|\alpha(v,w)=0, \forall w \in \mathbb{C}^n\}\\
    &=\{v\in \mathbb{C}^n|\alpha(v,v)=0\}.
  \end{align*}
\end{enumerate}

\end{lem}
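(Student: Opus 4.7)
The plan is to reduce everything to the standard fact that a semi-positive Hermitian form on $\mathbb{C}^n$ can be simultaneously diagonalized with the reference K\"ahler metric $\omega$, and then to read off all three conditions from the eigenvalues.

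First I would choose a unitary change of coordinates (with respect to $\omega$) that simultaneously diagonalizes $\alpha$ and $\omega$, so that I can assume
\[
\omega = \mathrm{i}\sum_{i=1}^n dz_i\wedge d\overline{z}_i,\qquad \alpha = \mathrm{i}\sum_{i=1}^n \lambda_i\, dz_i\wedge d\overline{z}_i,\qquad \lambda_i\geq 0.
\]
Let $r$ be the number of strictly positive $\lambda_i$'s. Then a direct computation (already recorded in the paragraph preceding the lemma) gives
\[
\alpha^k\wedge\omega^{n-k} = c_{n,k}\, s_k(\lambda_1,\dots,\lambda_n)\,\omega^n/\vol
\]
for a positive combinatorial constant $c_{n,k}$. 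Because all $\lambda_i\geq 0$, the elementary symmetric polynomial $s_k(\lambda)$ is simply the sum of products of $k$ distinct $\lambda_i$'s, hence $s_k(\lambda)>0$ if and only if at least $k$ of the $\lambda_i$'s are strictly positive, i.e.\ if and only if $r\geq k$.

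From this observation the three conditions are each equivalent to $r\geq m$: condition (1) is the definition of $r\geq m$; condition (2) is $s_m(\lambda)>0$, hence $r\geq m$; and since in the diagonal form $\ker\alpha$ is exactly the coordinate subspace spanned by the $e_i$'s with $\lambda_i=0$, we have $\codim\ker\alpha = r$, giving (3). Finally, the definition of $m$-positive requires $\alpha^k\wedge\omega^{n-k}>0$ for all $1\leq k\leq m$, and in the semi-positive case the sequence $r\geq k$ is monotone, so positivity at the top level $k=m$ automatically propagates downward. This closes the circle (1)$\Leftrightarrow$(2)$\Leftrightarrow$(3)$\Leftrightarrow$($m$-positive).

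It remains to justify the parenthetical equality of the two descriptions of $\ker\alpha$. The inclusion $\{v:\alpha(v,w)=0\ \forall w\}\subset\{v:\alpha(v,v)=0\}$ is trivial. For the reverse, since $\alpha$ is semi-positive Hermitian, the Cauchy--Schwarz inequality $|\alpha(v,w)|^2\leq \alpha(v,v)\alpha(w,w)$ holds, so $\alpha(v,v)=0$ forces $\alpha(v,w)=0$ for every $w$. There are no real obstacles in this proof; the only minor point to be careful about is making sure the constant $c_{n,k}$ is positive so that the sign of $\alpha^k\wedge\omega^{n-k}$ really tracks the sign of $s_k(\lambda)$, which is a routine multilinear algebra computation.
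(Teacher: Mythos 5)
Your proof is correct and follows exactly the route the paper intends: the paper offers no proof (the lemma is prefaced by ``It is easy to see''), but the preceding paragraph already sets up the simultaneous diagonalization and the identification of $\alpha^k\wedge\omega^{n-k}$ with $s_k(\lambda_1,\dots,\lambda_n)$, which is precisely what you use. The only detail worth the care you gave it is the Cauchy--Schwarz step identifying the two descriptions of $\ker\alpha$, and you handled that correctly.
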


On a complex manifold $X$, similar positivity notions (e.g., $m$ positivity, exact $m$ positivity) can be defined if the forms satisfy the positivity requirement at every point of $X$.

Let $X$ be a compact K\"ahler manifold with a K\"ahler metric $\widehat{\omega}$ and $\alpha \in H^{1,1} (X, \mathbb{R})$, then $\alpha$ is called nef if for any $\varepsilon >0$, there exists a smooth representative $\widehat{\omega}_\varepsilon \in \alpha$ such that
$$\widehat{\omega}_\varepsilon > - \varepsilon \widehat{\omega}.$$
Equivalently, $\alpha$ is nef if and only if it is in the closure of the K\"ahler cone of $X$.  Furthermore, if $X$ is projective and $\alpha$ is a real divisor class, then $\alpha$ is nef if and only if $$\alpha \cdot [C] \geq 0$$ for any effective $1$-cycle $C$. The analytic nefness and algebraic nefness are equivalent for divisor classes on a complex projective manifold.

Let $X=\mathbb{C}^n / \Gamma$ be a compact complex torus. It is well known that, by harmonic forms with respect to the standard Euclidean metric $\omega= \mathrm{i}\sum dz_i \wedge d\overline{z}_i$ on $X$, we have
\begin{equation*}
  H^{p,q} (X, \mathbb{C}) \cong \Lambda^{p, q} (\mathbb{C}^n)
\end{equation*}
and the cup product of cohomology classes is identified with the wedge product of differential forms. A class $\alpha \in H^{1,1} (X, \mathbb{R})$ is nef if and only if it has a representative $\widehat{\alpha} \in \alpha$ such that $\widehat{\alpha}$ is a semi-positive form with constant coefficients.

Let $X$ be a compact K\"ahler manifold with a K\"ahler class $\omega$ and $\alpha \in H^{1,1} (X, \mathbb{R})$ a nef class, then the numerical dimension of $\alpha$ is given by
\begin{align*}
  \nd(\alpha)&=\max \{k| \alpha^k \cdot \omega^{n-k}>0\}\\
  &=\max \{k| \alpha^k \neq 0\}.
\end{align*}

The following lemma is clear.

\begin{lem}
Let $X=\mathbb{C}^n / \Gamma$ be a compact complex torus, and $\alpha$ a nef class on $X$. Let $\widehat{\alpha} \in \alpha$ be the semi-positive representative with constant coefficients. Then $\nd(\alpha)$ is the number of positive eigenvalues of $\widehat{\alpha}$ and is also equal to $\codim \ker \widehat{\alpha}$.

\end{lem}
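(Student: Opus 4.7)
The plan is to diagonalize the semi-positive Hermitian matrix associated with $\widehat{\alpha}$ and then compute the wedge powers explicitly. By the spectral theorem, there exists a $\mathbb{C}$-linear change of coordinates on $\mathbb{C}^n$ that simultaneously diagonalizes $\widehat{\alpha}$; let $r$ denote the number of strictly positive eigenvalues of $\widehat{\alpha}$, so that after reordering
\begin{equation*}
\widehat{\alpha} = \mathrm{i}\sum_{i=1}^n \lambda_i\, dz_i \wedge d\overline{z}_i,\quad \lambda_1,\ldots,\lambda_r > 0,\ \lambda_{r+1}=\cdots=\lambda_n = 0.
\end{equation*}
Directly from the definition $\ker\widehat{\alpha}=\{v\in\mathbb{C}^n\mid \widehat{\alpha}(v,v)=0\}$, it is clear that $\ker\widehat{\alpha}$ is spanned by $\partial/\partial z_{r+1},\ldots,\partial/\partial z_n$, hence $\codim\ker\widehat{\alpha}=r$.

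Next, I would compute $\widehat{\alpha}^k$ by expanding the $k$-fold wedge product:
\begin{equation*}
\widehat{\alpha}^k = \mathrm{i}^k k!\sum_{\substack{I\subset[n]\\ |I|=k}} \Big(\prod_{i\in I}\lambda_i\Big)\bigwedge_{i\in I}\big(dz_i\wedge d\overline{z}_i\big).
\end{equation*}
The basic $(k,k)$ forms indexed by distinct $k$-subsets are linearly independent in $\Lambda^{k,k}(\mathbb{C}^n)$, so $\widehat{\alpha}^k\neq 0$ if and only if $\prod_{i\in I}\lambda_i>0$ for at least one $k$-subset $I$. The latter is possible exactly when $k\leq r$, since otherwise any $k$-subset must contain one of the zero eigenvalues.

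Finally, invoking the isomorphism $H^{k,k}(X,\mathbb{C})\cong \Lambda^{k,k}(\mathbb{C}^n)$ recalled just before the lemma, which identifies the cup product with the wedge product of constant-coefficient representatives, one has $\alpha^k\neq 0$ in cohomology iff $\widehat{\alpha}^k\neq 0$ as a form. Therefore
\begin{equation*}
\nd(\alpha) = \max\{k\mid \widehat{\alpha}^k\neq 0\} = r = \codim\ker\widehat{\alpha},
\end{equation*}
which is the claim. There is no real obstacle here beyond the elementary linear algebra; the only thing to be careful about is the passage between cohomology and constant-coefficient forms, which is already supplied by the torus identification.
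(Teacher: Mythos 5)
Your proof is correct and is exactly the standard argument the paper has in mind: the paper states this lemma without proof ("The following lemma is clear"), and your diagonalization of $\widehat{\alpha}$, the explicit expansion of $\widehat{\alpha}^k$, and the identification $H^{k,k}(X,\mathbb{C})\cong\Lambda^{k,k}(\mathbb{C}^n)$ are precisely the ingredients that make it clear. No gaps.
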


A \emph{polymatroid} on a finite set $E$ is given by a rank function $r: 2^E \rightarrow \mathbb{Z}_{\geq 0}$ satisfying the following axioms:
\begin{itemize}
  \item (Submodularity) For any $A_1, A_2 \subset E$, we have $r(A_1 \cup A_2) + r(A_1 \cap A_2) \leq r(A_1) + r(A_2)$;
  \item (Monotonicity) For any $A_1 \subset A_2 \subset E$, we have $r(A_1) \leq r(A_2)$;
  \item (Normalization) For the empty set $\emptyset$, $r(\emptyset) =0$.
\end{itemize}

A polymatroid is called loopless, if the rank of any nonempty subset is nonzero.

A polymatroid is a \emph{matroid} if it also satisfies the ``boundedness'' axiom: for any $A\subset E$, we have $$r(A)\leq |A|.$$

A discrete polymatroid $\mathcal{P}$ on $[m]$ with the rank function $r$ is a collection of points in $\mathbb{N}^m$ of the following form
\begin{equation*}
  \mathcal{P} = \{\mathbf{n}=(n_1,...,n_m)\in \mathbb{N}^m | n_{[m]} = r([m])\ \text{and}\ n_I \leq r(I),\ \forall I\subsetneq [m] \}.
\end{equation*}

\begin{exmple}(see \cite[Definition 2.16]{CCLMZ20})\label{matroid exmpl}
Let $V_1,...,V_m$ be linear subspaces of a vector space $V$ over some field $k$, then the rank function
\begin{equation*}
  r(I) =\dim (\sum_{i\in I} V_i),\ I\subset [m],
\end{equation*}
defines a polymatroid on $[m]$.
A matroid/polymatroid of this kind is called linear or representable over the field $k$.

Let $k\hookrightarrow L$ be a field extension and let $k\hookrightarrow L_i \subset L$, $i\in [m]$, be intermediate field extensions, then the rank function
\begin{equation*}
  r(I)=\trdeg_k (\wedge_{i\in I} L_i)
\end{equation*}
defines a polymatroid, where $\wedge_{i\in I} L_i$ is the smallest subfield in $L$ containing all $L_i$.
A matroid/polymatroid of this type is called algebraic over the field $k$.

Over a field of characteristic zero, the algebraic and representable polymatroids coincide \cite{ingletonRepreMatro}.

The Vam\'{o}s matroid $V_8$ is not representable over any field (see e.g. \cite{oxleyMatroidBook}), indeed, it is not algebraic over any field (see \cite{ingletonNonalgebraic}).
\end{exmple}

In the study of combinatorial properties of numerical dimensions, the ``reverse Khovanskii-Teissier inequalities'' noted in \cite{lehXiaoCorrespondences} (see also \cite{XiaoWeakMorse},\cite{dangxiao-valuations}) will be used in Section \ref{sec polymatroid}.

\begin{lem}\label{reverse KT}
Let $X$ be a compact K\"ahler manifold of dimension $n$, and let $A_1,...,A_k, B, C_1,...,C_{n-k}$ be nef $(1,1)$ classes. Then the following inequality holds:
\begin{equation*}
  \frac{n!}{k!(n-k)!} (A_1 \cdot...\cdot A_k\cdot B^{n-k}) (B^k \cdot C_1\cdot...\cdot C_{n-k}) \geq (B^n) (A_1 \cdot...\cdot A_k\cdot C_1\cdot...\cdot C_{n-k}).
\end{equation*}

\end{lem}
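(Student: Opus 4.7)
My plan is to derive the inequality from the Hodge--Riemann bilinear relations associated with the class $B$, after a standard perturbation reduction, and then use an induction on $k$. By continuity (approximating nef classes by K\"ahler classes), it suffices to treat the case where $B$ is K\"ahler. Then the Lefschetz decomposition and the Hodge--Riemann relations with respect to $B$ become available; in particular the bilinear form $(\alpha,\beta)\mapsto\alpha\cdot\beta\cdot B^{n-2}$ on $H^{1,1}(X,\mathbb{R})$ has Lorentzian signature, with unique positive direction spanned by $B$.

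I would proceed by induction on $k$, noting that $k=0$ and $k=n$ give trivial equalities. For the base case $k=1$, I decompose $A=\lambda B+A_0$ with
$$\lambda=\frac{A\cdot B^{n-1}}{B^n},\qquad A_0\cdot B^{n-1}=0,$$
and use hard Lefschetz to write $C_1\cdots C_{n-1}-\mu B^{n-1}=D_0\cdot B^{n-2}$ for a unique $B$-primitive class $D_0\in H^{1,1}(X,\mathbb{R})$, where $\mu=(B\cdot C_1\cdots C_{n-1})/(B^n)$. A short computation reduces the desired inequality to the bound
$$A_0\cdot D_0\cdot B^{n-2}\ \leq\ (n-1)\,\lambda\,\mu\,(B^n).$$
I would obtain this by Cauchy--Schwarz applied to the negative-definite form $(\gamma,\delta)\mapsto -\gamma\cdot\delta\cdot B^{n-2}$ on primitive classes, combined with the bound $-A_0^2\cdot B^{n-2}\leq\lambda^2(B^n)$, which follows immediately from $A^2\cdot B^{n-2}\geq 0$ by nefness of $A$.

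The inductive step $k\geq 2$ would peel off one class $A_1$ and apply the $k=1$ case with the mixed class $A_2\cdots A_k\cdot C_1\cdots C_{n-k}$ playing the role of the product $C_1\cdots C_{n-1}$, combined with the inductive hypothesis for $k-1$ classes. The combinatorial factor $\binom{n}{k}$ should accumulate multiplicatively as successive factors $n-j+1$ are produced at each peeling, and one checks that the endpoint cases match.

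The main obstacle is controlling $-D_0^2\cdot B^{n-2}$ in the base case. While the nefness of a single $(1,1)$ class trivially controls its primitive self-intersection, the analogous positivity for the product $C_1\cdots C_{n-1}\in H^{n-1,n-1}$ is genuinely more delicate: one likely needs to invoke the mixed Hodge--Riemann relations \`a la Dinh--Nguyen and Cattani, or else to use a recursive application of the very reverse KT inequality being proved. I expect the correct bound to have the form $-D_0^2\cdot B^{n-2}\leq(n-1)^2\mu^2(B^n)$, whose square root then matches the combinatorial factor $n-1$ needed on the right-hand side. Establishing this quadratic bound on the primitive residue of a product of nef classes is where I anticipate the bulk of the technical work to lie.
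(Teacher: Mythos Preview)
The paper does not prove this lemma: it is quoted from the literature (the references \cite{lehXiaoCorrespondences}, \cite{XiaoWeakMorse}, \cite{dangxiao-valuations}) and used as a black box in Section~\ref{sec polymatroid}. So there is no ``paper's own proof'' to compare against; the proofs in those references proceed via pointwise trace/positivity inequalities for differential forms rather than via the Hodge--Riemann primitive decomposition you propose.

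Your proposal has two genuine gaps.

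\emph{First, the induction step does not close.} Peeling off $A_1$ via the $k=1$ case and then applying the $(k-1)$-hypothesis to $A_2,\ldots,A_k$ with $C$-tuple $(B,C_1,\ldots,C_{n-k})$ yields
\[
(B^n)^2\,(A_1\cdots A_k\cdot C_1\cdots C_{n-k})\ \leq\ n\binom{n}{k-1}\,(A_1\cdot B^{n-1})\,(A_2\cdots A_k\cdot B^{n-k+1})\,(B^k\cdot C_1\cdots C_{n-k}),
\]
whose left factor is $(A_1\cdot B^{n-1})(A_2\cdots A_k\cdot B^{n-k+1})$, \emph{not} $(A_1\cdots A_k\cdot B^{n-k})$. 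These are genuinely different, and the derived bound can be strictly weaker than the target. For instance on $X=\mathbb{P}^1\times\mathbb{P}^1\times\mathbb{P}^1$ with $B=H_1+H_2+H_3$, $A_1=H_1$, $A_2=H_2$, $C_1=H_3$ ($n=3$, $k=2$), one computes $(A_1\cdot B^2)(A_2\cdot B^2)/B^3=4/6$ while $(A_1\cdot A_2\cdot B)=1$; plugging in, your chained bound gives only $6\cdot 4/6=4\geq 1$ in place of the sharp $\binom{3}{2}\cdot 1\cdot 1 = 3\geq 1$. So the factors do not ``accumulate to $\binom{n}{k}$'' in the way you hope; the scheme proves a weaker inequality with the wrong left-hand side.

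\emph{Second, the base case is incomplete in exactly the place you flag.} The bound $-D_0^2\cdot B^{n-2}\leq(n-1)^2\mu^2(B^n)$ is equivalent to $E\cdot C_1\cdots C_{n-1}\geq -n(n-2)\mu^2(B^n)$, where $E\in H^{1,1}$ is the (generally non-nef) Lefschetz preimage of $C_1\cdots C_{n-1}$. This is not a consequence of the mixed Hodge--Riemann relations in any direct way that I can see, and invoking reverse KT recursively is circular. Your own $(\mathbb{P}^1)^3$ test with $C_1=H_1$, $C_2=H_2$ shows the bound is sharp (both sides equal $-1/2$), so there is no slack to exploit. Absent an independent argument for this quadratic control of $D_0$, the $k=1$ case remains open in your framework, and even granting it, the induction above would still not recover the stated inequality.
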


The terminology ``reverse Khovanskii-Teissier inequalities'' was first introduced in the work \cite{lehmxiao16convexity}, due to the fact that the classical Khovanskii-Teissier inequalities give a lower bound of the intersection number $A_1 \cdot...\cdot A_k\cdot C_1\cdot...\cdot C_{n-k}$ and our result gives an upper bound by interpolating an auxiliary nef class $B$.

\begin{rmk}\label{jiangli reverseKT}
For a projective variety over an arbitrary algebraically closed field, analogous estimates were established in \cite{jiang2021algebraicKT} by using multipoint Okounkov bodies.
\end{rmk}

\section{Hard Lefschetz property}\label{sec hardlef}

In this section, we give a sufficient and necessary characterization on complete intersection classes of nef classes to have HL property on a compact complex torus.

Recall that we are going to prove:

\begin{thrm} [=Theorem \ref{hardlef tori}] 
Let $X=\mathbb{C}^n / \Gamma$ be a compact complex torus of dimension $n$ and $0\leq p,q\leq p+q\leq n$. Let $\alpha_1,...,\alpha_{n-p-q}, \eta\in H^{1,1} (X, \mathbb{R})$ be nef classes on $X$. Denote $\Omega=\alpha_1\cdot...\cdot\alpha_{n-p-q}$. Then for HL property, the following statements are equivalent:
\begin{enumerate}
  \item the intersection class $\Omega$ has HL property;
  \item for any subset $I \subset [n-p-q]$, $\nd(\alpha_I)\geq |I|+p+q$.
\end{enumerate}
For HR property, the following statements are equivalent:
\begin{enumerate}
  \item the pair $(\Omega, \eta)$ has HR property for $\eta$ with $\nd(\eta)\geq p+q$;
  \item for any subset $I \subset [n-p-q]$, $\nd(\alpha_I)\geq |I|+p+q$.
\end{enumerate}

\end{thrm}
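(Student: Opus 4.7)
The plan is to work in the torus setting where the theorem reduces to pure linear algebra on $\mathbb{C}^n$: harmonic identification gives $H^{p,q}(X,\mathbb{C})\cong\Lambda^{p,q}(\mathbb{C}^n)$, cup product becomes wedge, and nef $(1,1)$-classes correspond to constant-coefficient semi-positive Hermitian $(1,1)$-forms, with $\nd(\alpha)$ equal to the rank of $\alpha$ and $\ker\alpha$ the usual kernel. Under this dictionary HL for $\Omega$ becomes injectivity of $\Omega\wedge\colon\Lambda^{p,q}(\mathbb{C}^n)\to\Lambda^{n-q,n-p}(\mathbb{C}^n)$, and HR for $(\Omega,\eta)$ becomes positive-definiteness of the Hermitian form $Q(\phi,\psi)=c_{p,q}\int\Omega\cdot\phi\cdot\overline{\psi}$ on the primitive subspace $P^{p,q}=\ker(\Omega\cdot\eta\cdot)$; the assumption $\nd(\eta)\geq p+q$ is what ensures $P^{p,q}$ behaves well in the K\"ahler limit. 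I will handle the two equivalences separately, necessity by an explicit construction of a kernel element and sufficiency by deformation to the K\"ahler case combined with a semicontinuity analysis.

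For necessity $(1)\Rightarrow(2)$, I assume $r:=\nd(\alpha_I)<|I|+p+q$ for some $I\subset[n-p-q]$ and produce a nonzero $\phi\in\Lambda^{p,q}(\mathbb{C}^n)$ with $\Omega\cdot\phi=0$. Choose a splitting $\mathbb{C}^n=V\oplus W$ with $W=\ker\alpha_I$ and $\dim V=r$; semi-positivity forces $W\subset\ker\alpha_i$ for every $i\in I$, so $\Omega_I:=\prod_{i\in I}\alpha_i$ lies in $\Lambda^{|I|,|I|}(V)$. If $p\geq r-|I|+1$, I pick $\phi$ with at least $r-|I|+1$ holomorphic factors in $V^*$, whereupon $\Omega_I\cdot\phi$ would demand more than $r$ holomorphic factors in $V$ and hence vanishes; the case $q\geq r-|I|+1$ is symmetric. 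In the remaining case $p,q\leq r-|I|$ but $p+q\geq r-|I|+1$, the map $\Omega_I\wedge\colon\Lambda^{p,q}(V)\to\Lambda^{p+|I|,q+|I|}(V)$ sits strictly past the Lefschetz middle for the full-rank K\"ahler-on-$V$ class $\alpha_I|_V$; perturbing each $\alpha_i$ to $\alpha_i+t\omega$ and restricting to $V$ makes every factor K\"ahler on $V$, and the mixed HL of Dinh-Nguyen combined with the fact that the map is past the middle exhibits a nontrivial kernel that persists to $t=0$ by semicontinuity of kernels. Any such kernel element furnishes $\phi\in\Lambda^{p,q}(V)\subset\Lambda^{p,q}(\mathbb{C}^n)$ with $\Omega\cdot\phi=0$, and when $\nd(\eta)\geq p+q$ the same $\phi$ lies in $P^{p,q}$, so $Q(\phi,\phi)=0$ contradicts HR.

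For sufficiency $(2)\Rightarrow(1)$, I deform $\alpha_i(t)=\alpha_i+t\omega$ and $\eta(t)=\eta+t\omega$ for a fixed K\"ahler class $\omega$ and $t>0$, making all classes K\"ahler so that by Dinh-Nguyen and Cattani the pair $(\Omega(t),\eta(t))$ enjoys HR with a fixed signature. Continuity gives $Q_t\to Q_0$, and upper semicontinuity of signature identifies the kernel of $Q_0$ with the kernel of $\Omega\wedge$ on $\Lambda^{p,q}(\mathbb{C}^n)$; HL and HR at $t=0$ both reduce to the triviality of this limit kernel plus a matching of primitive subspaces guaranteed by $\nd(\eta)\geq p+q$. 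The main obstacle, and the hard part of the proof, is to show that under the $\nd$ hypothesis this kernel is indeed trivial, since bare continuity allows the kernel of $Q_t$ to jump at $t=0$. My plan is to prove the contrapositive by induction on $|I|$: from any nonzero $\phi$ with $\Omega\cdot\phi=0$ I locate a minimal $I$ with $\Omega_I\cdot\phi=0$, and the structural analysis of semi-positive $(1,1)$-forms supported on $\Image\alpha_I$ then forces $\dim\ker\alpha_I\geq n-|I|-p-q+1$, precisely inverting the construction from the previous paragraph. This argument is patterned on Alexandrov's proof of the mixed-discriminant characterization ($p=q=0$) and Panov's extension to $p=q=1$, with the combinatorial shape of the $\nd$ condition matching the polymatroid structure of Theorem~\ref{polymatroid}.
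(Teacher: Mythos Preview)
Your necessity argument $(1)\Rightarrow(2)$ is essentially correct and matches the paper's Proposition~\ref{necessary condition of HL}, though you over-complicate it: once you observe that $\Omega_I$ lives in $\Lambda^{|I|,|I|}(V)$ with $\dim V=r<|I|+p+q$, the map $\Omega_I\wedge\colon\Lambda^{p,q}(V)\to\Lambda^{p+|I|,q+|I|}(V)$ already has source of strictly larger dimension than target by a direct binomial-coefficient count, so it has a kernel with no need for your three-case split or any perturbation to K\"ahler forms. The paper does exactly this one-line dimension count.

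Your sufficiency argument $(2)\Rightarrow(1)$, however, has a genuine gap at precisely the step you flag as ``the main obstacle''. You correctly note that the deformation $\alpha_i(t)=\alpha_i+t\omega$ alone is insufficient because $\ker Q_t$ can jump at $t=0$, and you propose to rule this out by taking a minimal $I$ with $\Omega_I\cdot\phi=0$ and invoking ``structural analysis of semi-positive $(1,1)$-forms supported on $\Image\alpha_I$'' to force $\nd(\alpha_I)<|I|+p+q$. No such analysis is supplied, and it is unclear how minimality of $I$ alone constrains the rank of $\alpha_I$. Already the base case $|I|=1$ asks you to show that $\alpha_1\wedge\phi=0$ with $\phi\neq0$ forces $\mathrm{rank}\,\alpha_1\leq p+q$; verifying this when $\mathrm{rank}\,\alpha_1\geq p+q+1$ amounts to hard Lefschetz on the image subspace of $\alpha_1$, so your induction on $|I|$ is circular unless backed by an induction on the ambient dimension that you have not set up.

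The paper's route is different and is what actually closes the gap: it runs an induction on $n$ via the scheme $\HR_{n-1}\Rightarrow\HL_n\Rightarrow\HR_n$. The substantive step is $\HR_{n-1}\Rightarrow\HL_n$ (Lemmas~\ref{HRR_{n-1}ToHL_nStep1} and~\ref{HRR_{n-1}ToHL_nStep2}): given $\phi$ with $\Omega\wedge\phi=0$, one diagonalizes $\alpha_{n-p-q}$ and restricts to hyperplanes $H\supset\ker\alpha_{n-p-q}$, carefully checking that for a Zariski-dense set of such $H$ the restricted tuple $(\alpha_1|_H,\dots,\alpha_{n-p-q}|_H)$ still satisfies the numerical-dimension hypothesis in dimension $n-1$. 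Then $\HR_{n-1}$ gives $Q_H(\phi|_H,\phi|_H)\geq 0$ for all hyperplanes, the diagonalization of $\alpha_{n-p-q}$ forces equality on enough of them, whence $\phi|_H=0$ for all $H\supset\ker\alpha_{n-p-q}$, and since $\dim\ker\alpha_{n-p-q}\leq n-p-q-1$ this yields $\phi=0$. The step $\HL_n\Rightarrow\HR_n$ is then the standard Lefschetz-decomposition plus signature-continuity argument you sketched, made rigorous because $\HL_n$ guarantees the primitive subspaces have constant dimension along the deformation (Lemmas~\ref{HLtoLD} and~\ref{HLtoHR}). You should replace your minimal-$I$ sketch with this hyperplane-restriction induction.
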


The result is inspired by \cite{Panov1987ONSP} and \cite{xiaoHodgeIndex, xiaomixedHRR}.

As explained in Section \ref{sec pre}, on a compact complex torus this is equivalent to the K\"ahler package in the linear setting: we identify $H^{p,q} (X, \mathbb{C})$ with $\Lambda^{p,q} (\mathbb{C}^n)$ and identify nef classes with semi-positive forms with constant coefficients on $\mathbb{C}^n$. In the following, we use these identifications.

We first show that the positivity assumptions imply HL or HR property.

\begin{prop}\label{thrm HRR Linear}

Let $0 \leq p,q \leq p+q \leq n$ and $\alpha_1,...,\alpha_{n-p-q},\eta \in \Lambda^{1,1}_{\mathbb{R}}(\mathbb{C}^n)$ be semi-positive $(1,1)$ forms.
Denote $\Omega=\alpha_1\wedge...\wedge\alpha_{n-p-q}$.
Assume that $\eta$ is $p+q$ positive and $\alpha_{I}$ is $|I|+p+q$ positive for any subset $I \subset [n-p-q]$. Then the followings holds:
\begin{enumerate}
\item The form $\Omega$ has HL property, i.e., the linear map given by wedge product with $\Omega$:
  \begin{equation*}
    \Omega : \Lambda^{p,q}(\mathbb{C}^n) \rightarrow \Lambda^{n-q,n-p}(\mathbb{C}^n)
  \end{equation*}
  is an isomorphism.

  \item The pair $(\Omega, \eta)$ has HR property, i.e., the quadratic form
  \begin{equation*}
    Q(\Phi,\Psi)=c_{p,q}\Omega \wedge \Phi \wedge \overline{\Psi}
  \end{equation*}
  is positive definite on the primitive space
  $$P^{p,q}(\mathbb{C}^n)=\{\Phi \in \Lambda^{p,q}(\mathbb{C}^n)| \Omega \wedge \eta \wedge \Phi =0\}.$$

\end{enumerate}

\end{prop}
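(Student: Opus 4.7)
The plan is to reduce both assertions to the classical mixed Hodge-Riemann theorem for K\"ahler classes of Dinh-Nguyen \cite{DN06} and Cattani \cite{cattanimixedHRR} by a perturbation and limit argument, using the partial-sum positivity hypotheses to rule out degenerations in the limit. Fix the Euclidean K\"ahler form $\omega_0 = \mathrm{i}\sum_j dz_j\wedge d\overline{z}_j$. For $t > 0$, set $\alpha_i^{(t)} = \alpha_i + t\omega_0$, $\eta^{(t)} = \eta + t\omega_0$, and $\Omega^{(t)} = \alpha_1^{(t)}\wedge\cdots\wedge\alpha_{n-p-q}^{(t)}$, each K\"ahler (resp.\ a product of K\"ahler classes). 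By Dinh-Nguyen, $\Omega^{(t)}$ has HL and the pair $(\Omega^{(t)},\eta^{(t)})$ has HR on $\Lambda^{p,q}(\mathbb{C}^n)$ for every $t > 0$. In particular, the Hermitian form $Q_t(\Phi,\Psi) = c_{p,q}\Omega^{(t)}\wedge\Phi\wedge\overline{\Psi}$ on $\Lambda^{p,q}$ is nondegenerate with a $t$-independent signature.

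The strategy is a signature argument. As $t\to 0^+$, $Q_t\to Q$ (the corresponding form for $\Omega$), and by continuity of spectra the signature of $Q$ is obtained from the common signature of the $Q_t$'s by moving some eigenvalues to zero. The null space of $Q$ coincides with $\ker\Omega$ by the nondegeneracy of Poincar\'e duality on $\mathbb{C}^n$. Thus, once we show $\ker\Omega = \{0\}$ (HL), the signature of $Q$ on $\Lambda^{p,q}$ agrees with that of $Q_t$; then HR follows from an analogous limiting analysis of the primitive subspaces $P^{p,q}(t) = \ker(\eta^{(t)}\Omega^{(t)})\to P^{p,q} = \ker(\eta\Omega)$, whose dimensions stabilize as long as the rank of $\eta\Omega$ does not collapse -- this non-collapse being ensured by $\nd(\eta)\geq p+q$ combined with the partial-sum hypotheses. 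Both assertions therefore come down to proving HL.

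For HL, since $\Omega:\Lambda^{p,q}\to\Lambda^{n-q,n-p}$ maps between equidimensional spaces, injectivity suffices: $\Omega\wedge\Phi = 0\Rightarrow\Phi = 0$. I plan to argue by induction on $m = n-p-q$ (the base $m = 0$ being trivial), with the last factor $\alpha_m$ as the splitting criterion. When $\alpha_m$ has full rank (is K\"ahler), the classical HL for $\alpha_m$ lets one peel off a copy of $\alpha_m$: $\Omega'\wedge(\alpha_m\wedge\Phi) = 0$ with $\Omega' = \alpha_1\wedge\cdots\wedge\alpha_{m-1}$ places $\alpha_m\wedge\Phi$ in the kernel of $\Omega'$ at bi-degree $(p+1,q+1)$, reducing to an inductive instance after verifying that the given hypotheses imply the appropriate partial-sum positivity at the shifted bi-degree, by exploiting $\nd(\alpha_m) = n$. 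When $\alpha_m$ is degenerate with kernel $K\subsetneq\mathbb{C}^n$, decompose $\mathbb{C}^n = K\oplus V'$ along the Hermitian orthogonal; on $V'$, $\alpha_m$ becomes K\"ahler and the partial-sum hypotheses descend to analogous ones for the restrictions $\alpha_i|_{V'}$. Split $\Phi$ by the bi-grading induced by this decomposition, apply the inductive hypothesis on the lower-dimensional torus $V'$ to the $V'$-components, and handle the $K$-components by a direct Poincar\'e-duality computation on $K$ combined with the partial-sum positivity of $\alpha_I$ for $I\not\ni m$.

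The most delicate step is transferring the partial-sum positivity across both kinds of reductions. In the K\"ahler peel-off, one needs the shifted positivity $|I|+p+q+2$ for $I\subset[m-1]$, whereas only $|I|+p+q$ is directly given; this gap must be closed by combining the original hypotheses involving $\alpha_m$ with the full rank of $\alpha_m$ in a careful rank-counting argument. In the slicing, one must verify that the restrictions $\alpha_I|_{V'}$ retain the $(|I|+p+q)$-positivity on $V'$ (whose dimension is strictly less than $n$) and correctly track how the bi-degrees of forms split between $K$ and $V'$. This combinatorial bookkeeping of positivity across the induction is where the bulk of the technical work will lie.
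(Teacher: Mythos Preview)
Your plan for deducing HR from HL by a signature-continuity argument is sound and is exactly what the paper does in Lemmas \ref{HLtoLD} and \ref{HLtoHR}. The difficulty is entirely in HL, and there your scheme breaks down.

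The ``K\"ahler peel-off'' is not an inductive reduction. With $m=n-p-q$ and $\alpha_m$ K\"ahler, writing $\Omega'\wedge(\alpha_m\wedge\Phi)=0$ puts $\alpha_m\wedge\Phi$ in $\Lambda^{p+1,q+1}$, while $\Omega'$ has bidegree $(m-1,m-1)$. An HL statement for $\Omega'$ lives at total degree $n-(m-1)=p+q+1$, not $p+q+2$; the map $\Omega':\Lambda^{p+1,q+1}\to\Lambda^{n-q,n-p}$ is between spaces of \emph{different} dimensions (for $p=q=1$, $n\ge 4$, the source is strictly larger), so it has a huge kernel and you cannot conclude $\alpha_m\wedge\Phi=0$. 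Your acknowledged need to upgrade $|I|+p+q$ positivity to $|I|+p+q+2$ is a symptom of the same degree mismatch, and that upgrade is simply false: take all $\alpha_i$ with $i<m$ equal to a fixed semi-positive form of rank exactly $m-1+p+q$. Since $\alpha_m$ is K\"ahler, every hypothesis on $\alpha_{I\cup\{m\}}$ is vacuous, so no ``rank-counting with $\alpha_m$'' can manufacture the missing positivity. The degenerate branch has the same defect in disguise: restricting to $V'=(\ker\alpha_m)^\perp$ lowers $n$ but not $m$, so on $V'$ you have $m>\dim V'-p-q$ factors and no HL statement to invoke; you then fall back on the K\"ahler case on $V'$, which fails as above.

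The paper's induction is on the ambient dimension $n$, and the reduction is by \emph{hyperplanes}, not by dropping a factor. Assuming $\HR_{n-1}$, one shows (Lemma \ref{HRR_{n-1}ToHL_nStep1}) that for a generic hyperplane $H$ the restrictions $\alpha_1|_H,\dots,\alpha_{m-1}|_H$ satisfy the partial-sum hypotheses in dimension $n-1$ with the \emph{same} $p,q$ (so the count $m-1=(n-1)-p-q$ is correct), while $\alpha_m|_H$ plays the role of $\eta$; this uses only that restriction to a hyperplane avoiding a chosen $v_I\in\ker\alpha_I$ drops $\dim\ker\alpha_I$ by one. Then (Lemma \ref{HRR_{n-1}ToHL_nStep2}) one diagonalizes $\alpha_m$, writes $c_{p,q}\Omega\wedge\Phi\wedge\overline{\Phi}$ as a nonnegative sum of $Q_H(\Phi|_H,\Phi|_H)$ over coordinate hyperplanes, and uses $\HR_{n-1}$ to force $\Phi|_H=0$ for every hyperplane $H\supset\ker\alpha_m$; since $\dim\ker\alpha_m\le n-p-q-1$, any $p+q$ vectors fit inside such an $H$, hence $\Phi=0$. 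This hyperplane-slicing step is the missing idea in your proposal.
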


It is clear that the HR property implies the HL property.

Denote the HL and HR statements of Proposition \ref{thrm HRR Linear} by $\HL_n$ and $\HR_n$ respectively.
We prove the result by induction on dimension, which is inspired by \cite{timorinMixedHRR} (see also \cite{xiaomixedHRR}).

We first show that $\HR_{n-1} \Rightarrow \HL_n$, whose proof is carried out in Lemma \ref{HRR_{n-1}ToHL_nStep1} and Lemma \ref{HRR_{n-1}ToHL_nStep2}.

\begin{lem}\label{HRR_{n-1}ToHL_nStep1}

Let $\alpha_1,...,\alpha_{n-p-q}$ be real $(1,1)$ forms as in Proposition \ref{thrm HRR Linear}.
Assume $\HR_{n-1}$, then for any $\Phi \in \Lambda^{p,q}(\mathbb{C}^n)$ satisfying $$\alpha_1\wedge ... \wedge \alpha_{n-p-q}\wedge \Phi=0$$ and
for any hyperplane $H \subset \mathbb{C}^n$, we have that
  \begin{equation*}
    Q_{H}(\Phi_{|H},\Phi_{|H}):=c_{p,q}{\alpha_{1}}_{|H}\wedge...\wedge{\alpha_{n-p-q-1}}_{|H}\wedge \Phi_{|H} \wedge \overline{\Phi_{|H}} \geq 0.
  \end{equation*}

\end{lem}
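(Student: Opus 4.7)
The plan is to reduce the desired inequality to the induction hypothesis $\HR_{n-1}$ applied on the hyperplane $H$, viewed as a copy of $\mathbb{C}^{n-1}$. On $H$ I would take the $(n-1)-p-q$ semi-positive classes in $\HR_{n-1}$ to be $\alpha_1|_H,\ldots,\alpha_{n-p-q-1}|_H$ and the auxiliary class to be $\eta:=\alpha_{n-p-q}|_H$. With these choices the target quadratic form $Q_H$ is exactly the HR form from $\HR_{n-1}$. Primitivity of $\Phi|_H$ in this setup comes for free: restricting the hypothesis $\alpha_1\wedge\cdots\wedge\alpha_{n-p-q}\wedge\Phi=0$ to $H$ yields
\[
\alpha_1|_H\wedge\cdots\wedge\alpha_{n-p-q-1}|_H\wedge\alpha_{n-p-q}|_H\wedge\Phi|_H=0,
\]
so $\Phi|_H$ lies in the primitive space.

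For the positivity hypotheses required by $\HR_{n-1}$ on $H$, one needs $\alpha_I|_H$ to be $(|I|+p+q)$-positive for each $I\subset[n-p-q-1]$ and $\alpha_{n-p-q}|_H$ to be $(p+q)$-positive. The key observation is that for a semi-positive $(1,1)$ form $\beta$ on $\mathbb{C}^n$ one has $\ker(\beta|_H)=\ker\beta\cap H$, so the rank of $\beta|_H$ drops by at most one and in fact equals that of $\beta$ whenever $H$ does not contain $\ker\beta$. Since the subspaces $\ker\alpha_I$ (for $I\subset[n-p-q]$) form a finite collection of proper subspaces of $\mathbb{C}^n$, for a generic hyperplane $H$ the rank of each $\alpha_I|_H$ equals that of $\alpha_I$, and the positivity hypotheses transfer verbatim from $\mathbb{C}^n$ to $H$. (For $I=\{n-p-q\}$ the conclusion is automatic since we only need $(p+q)$-positivity, which survives a rank drop of one.) Applying $\HR_{n-1}$ then gives $Q_H(\Phi|_H,\Phi|_H)\geq 0$ for every generic $H$.

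To extend to an arbitrary hyperplane $H_0$, I would approximate by a family of generic hyperplanes $H_t\to H_0$; since both $\alpha_i|_{H_t}$ and $\Phi|_{H_t}$ depend continuously on the parameter, the inequality $Q_{H_t}(\Phi|_{H_t},\Phi|_{H_t})\geq 0$ passes to the limit. I expect the main obstacle to be precisely this density step: on a non-generic $H_0$ containing some $\ker\alpha_I$ the positivity needed to invoke $\HR_{n-1}$ may genuinely fail, so the induction hypothesis cannot be applied directly on $H_0$. The resolution is that the desired inequality is a closed condition, so the perturbation argument closes the gap.
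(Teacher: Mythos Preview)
Your approach is correct and essentially identical to the paper's: verify that the $\HR_{n-1}$ hypotheses hold for the restricted forms on a dense open set of hyperplanes, apply the induction hypothesis there, and then extend the inequality $Q_H\geq 0$ to every hyperplane by continuity. The only imprecision is your blanket claim that the $\ker\alpha_I$ are \emph{proper} subspaces --- when $\ker\alpha_I=\{0\}$ every hyperplane contains it and the rank does drop --- but this case is harmless since the required positivity then survives automatically; the paper makes this distinction explicit by singling out the set $\mathcal I$ of indices $I$ for which $\alpha_I$ is \emph{exactly} $|I|+p+q$ positive (so $\ker\alpha_I\neq\{0\}$) and choosing one nonzero $v_I\in\ker\alpha_I$ for each such $I$ to avoid.
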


\begin{proof}
Let
  \begin{equation*}
    \begin{aligned}
      \mathcal{I}=& \{ I \subset [n-p-q-1]|\dim \ker \alpha_I =n-p-q-|I| \} \\
      =&\{ I \subset [n-p-q-1]|\alpha_I \ \text{is exactly} \ p+q+|I| \ \text{positive}\}.
    \end{aligned}
  \end{equation*}

We first consider the case when $\mathcal{I}\neq \emptyset$.

In this case, note that for any $I \in \mathcal{I}$, we have $$|I|\leq n-p-q-1.$$ It follows that
\begin{equation*}
  \dim \ker \alpha_I =n-p-q-|I| \geq 1.
\end{equation*}
We take $0\neq v_{I} \in \ker \alpha_I$ for each $I \in \mathcal{I}$.

Let
  \begin{equation*}
    \mathcal{H}=\{H \in \Gr^1(\mathbb{C}^n)| v_I \notin H, \forall I \in \mathcal{I} \}.
  \end{equation*}

We claim that for any $H \in \mathcal{H}$, the forms
  \begin{equation*}
    ({\alpha_1}_{|H},...,{\alpha_{n-p-q-1}}_{|H},{\alpha_{n-p-q}}_{|H})
  \end{equation*}
satisfies the positivity conditions of $\HR_{n-1}$, i.e.,
\begin{itemize}
  \item  ${\alpha_{n-p-q}}_{|H}$ is $p+q$ positive;
  \item ${\alpha_I}_{|H}$ is $|I|+p+q$ positive for any $ I \subset [n-p-q-1]$.
\end{itemize}
The former is obvious, since
  $\alpha_{n-p-q}$ is $p+q+1$ positive by our assumption and $H$ is a hyperplane. To show the latter, we consider two cases.
If $I \in \mathcal{I}$, then
  \begin{equation*}
    \ker {\alpha_I}_{|H}=H\cap \ker \alpha_I \subsetneq \ker \alpha_I,
  \end{equation*}
since $H$ does not contain a non-zero vector $v_I$ of $\ker \alpha_I$.
Hence
  \begin{align*}
    \dim \ker {\alpha_I}_{|H} &=\dim \ker \alpha_I-1 \\
    &=(n-1)-p-q-|I|,
  \end{align*}
which implies that ${\alpha_I}_{|H}$ is $p+q+|I|$ positive.
If $I \notin \mathcal{I}$, then $\alpha_I$ is $p+q+|I|+1$ positive by the definition of $\mathcal{I}$. Then ${\alpha_I}_{|H}$ is
automatically $p+q+|I|$ positive.

This finishes the proof of the claim.

Still suppose $H \in \mathcal{H}$. By assumption, $${\alpha_1}_{|_H}\wedge ... \wedge {\alpha_{n-p-q}}_{|_H}\wedge \Phi_{|_H}=0.$$
Using the above claim and the assumption $\HR_{n-1}$, we see that
$$Q_H(\Phi_{|H},\Phi_{|H})\geq 0.$$

On the other hand, $\mathcal{H}$ is nothing but a projective space removing finitely many proper linear subspaces. Then $\overline{\mathcal{H}}=\Gr^1(\mathbb{C}^n)$.
By continuity, we get that
$$Q_H(\Phi_{|H},\Phi_{|H})\geq 0,$$
for any hyperplane $H \subset \mathbb{C}^n$.

Now we consider the remaining case when $\mathcal{I}=\emptyset$. By the definition of $\mathcal{I}$, in this case, for any $I\subset [n-p-q]$, $\alpha_I$ is at least $p+q+|I|+1$ positive. For any $H \in \Gr^1(\mathbb{C}^n)$, the forms
  \begin{equation*}
    ({\alpha_1}_{|H},...,{\alpha_{n-p-q-1}}_{|H},{\alpha_{n-p-q}}_{|H})
  \end{equation*}
satisfy the positivity condition of $\HR_{n-1}$,
thus the above discussion still holds.

This finishes the proof.

\end{proof}

\begin{lem}\label{HRR_{n-1}ToHL_nStep2}
Let $\alpha_1,...,\alpha_{n-p-q}$ be real $(1,1)$ forms as in Proposition \ref{thrm HRR Linear}.
Assume $\HR_{n-1}$, then any $\Phi \in \Lambda^{p,q}(\mathbb{C}^n)$ satisfying $$\alpha_1\wedge ... \wedge \alpha_{n-p-q}\wedge \Phi=0$$
vanishes.
\end{lem}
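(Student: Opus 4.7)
The plan is to upgrade the non-negativity $Q_H(\Phi_{|H},\Phi_{|H})\geq 0$ from Lemma \ref{HRR_{n-1}ToHL_nStep1} into actual vanishing on every hyperplane $H$ containing $V:=\ker\alpha_{n-p-q}$, then invoke $\HR_{n-1}$ to deduce $\Phi_{|H}=0$ on such $H$, and finally kill $\Phi$ by a dimension count using $\codim V\geq p+q+1$. The codimension bound follows from applying the positivity hypothesis to $I=\{n-p-q\}$, which says $\alpha_{n-p-q}$ is $p+q+1$ positive.

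For any nonzero linear form $l$ on $\mathbb{C}^n$, the rank-one semi-positive form $\beta_l=\mathrm{i}\,l\wedge\bar{l}$ has kernel hyperplane $H=\ker l$, and as top forms on $\mathbb{C}^n$ the expression
$$c_{p,q}\,\alpha_1\wedge\cdots\wedge\alpha_{n-p-q-1}\wedge\beta_l\wedge\Phi\wedge\overline{\Phi}$$
equals $Q_H(\Phi_{|H},\Phi_{|H})\wedge\beta_l$, which is non-negative by Lemma \ref{HRR_{n-1}ToHL_nStep1} (extended by continuity to every hyperplane). Fix any $H\supset V$ and such an $l$. Since $\ker\beta_l=H\supset V=\ker\alpha_{n-p-q}$, the form $\alpha_{n-p-q}-c\beta_l$ is still semi-positive for all sufficiently small $c>0$; decomposing the residue into rank-one semi-positive summands yields $\alpha_{n-p-q}=c\beta_l+\sum_j c_j\beta_{l_j}$. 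Wedging the hypothesis $\alpha_1\wedge\cdots\wedge\alpha_{n-p-q}\wedge\Phi=0$ with $c_{p,q}\overline{\Phi}$ then writes $0$ as a sum of non-negative terms of the above shape, so each summand must vanish. In particular $Q_H(\Phi_{|H},\Phi_{|H})=0$.

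To pass from this identity to the vanishing of $\Phi_{|H}$, I invoke $\HR_{n-1}$. The dense open set $\mathcal{H}$ constructed in the proof of Lemma \ref{HRR_{n-1}ToHL_nStep1} intersects the slice $\{H\supset V\}$ densely: for each $I\in\mathcal{I}$ the assumption that $\alpha_{I\cup\{n-p-q\}}$ is $(|I|+p+q+1)$ positive gives $\codim\ker\alpha_{I\cup\{n-p-q\}}\geq|I|+p+q+1$, while $\dim\ker\alpha_I=n-|I|-p-q$ forces $\ker\alpha_I\cap V\subsetneq\ker\alpha_I$, so the auxiliary vector $v_I$ may be chosen outside $V$. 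For $H$ in this dense open subset of $\{H\supset V\}$, the restricted system $({\alpha_i}_{|H})$ satisfies the positivity hypotheses of $\HR_{n-1}$, and $\Phi_{|H}$ is primitive because $\alpha_1\wedge\cdots\wedge\alpha_{n-p-q}\wedge\Phi=0$ restricts. Positive-definiteness of $Q_H$ on the primitive space together with $Q_H(\Phi_{|H},\Phi_{|H})=0$ yields $\Phi_{|H}=0$, and by continuity $\Phi_{|H}=0$ for \emph{every} hyperplane $H\supset V$.

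Finally, choose coordinates so that $V=\{z_1=\cdots=z_k=0\}$ with $k=\codim V\geq p+q+1$. Writing $\Phi=\sum c_{IJ}\,dz_I\wedge d\bar{z}_J$ with $|I|=p,|J|=q$ and applying $\Phi_{|\{z_i=0\}}=0$ for each $1\leq i\leq k$, the monomial $dz_I\wedge d\bar{z}_J$ survives the restriction exactly when $i\notin I\cup J$; surviving terms would therefore require $\{1,\dots,k\}\subset I\cup J$, contradicting $|I\cup J|\leq p+q<k$. Hence $\Phi=0$. The main obstacle is the decomposition step together with verifying that the generic set $\mathcal{H}$ of Lemma \ref{HRR_{n-1}ToHL_nStep1} stays dense after slicing by $\{H\supset V\}$; both points rest on the exact $m$-positivity dimension identities encoded in the numerical-dimension hypothesis.
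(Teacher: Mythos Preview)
Your proof is correct and follows the same strategy as the paper's: decompose $\alpha_{n-p-q}$ into rank-one semi-positive pieces to force $Q_H(\Phi_{|H},\Phi_{|H})=0$ for every hyperplane $H\supset\ker\alpha_{n-p-q}$, verify the $\HR_{n-1}$ hypotheses on a dense open subset of such hyperplanes, apply $\HR_{n-1}$ to get $\Phi_{|H}=0$, and finish with the codimension bound $\codim\ker\alpha_{n-p-q}\ge p+q+1$. The only cosmetic difference is that the paper introduces a fresh index set $\mathcal{J}=\{I\subset[n-p-q-1]:\ker\alpha_I\not\subset\ker\alpha_{n-p-q}\}$ for the density step, whereas you recycle the set $\mathcal{I}$ from Lemma~\ref{HRR_{n-1}ToHL_nStep1} together with the observation that $v_I$ may be chosen outside $\ker\alpha_{n-p-q}$; the two bookkeeping devices are equivalent.
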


\begin{proof}

Let
  \begin{equation*}
    \mathcal{J}= \{ I \subset [n-p-q-1]|\ker \alpha_I \not\subset \ker \alpha_{n-p-q}\}.
  \end{equation*}

As in Lemma \ref{HRR_{n-1}ToHL_nStep1}, we first consider the case when $\mathcal{J}\neq \emptyset$.

In this case, take a
  \begin{equation*}
    v_I \in \ker \alpha_I \backslash \ker \alpha_{n-p-q}
  \end{equation*}
for each $I \in \mathcal{J}$.

Let
  \begin{equation*}
    \mathcal{S}=\{H \in \Gr^1(\mathbb{C}^n)| \ker \alpha_{n-p-q} \subset H \ \text{and} \ v_I \notin H , \forall I \in \mathcal{J} \}.
  \end{equation*}

We first show that $\Phi_{|_H}=0$ for any $H\in \mathcal{S}$.

In fact, for any $H \in \mathcal{S}$, since $\ker \alpha_{n-p-q} \subset H$ we can diagonalize $\alpha_{n-p-q}$ in the following way:
  \begin{equation*}
    \alpha_{n-p-q}=\mathrm{i} \gamma_1  dH \wedge d\overline{H}+\mathrm{i}\sum_{j=2}^{n}\gamma_j  dH_j \wedge d\overline{H_j},
  \end{equation*}
with $\gamma_1 >0$ and $\gamma_j \geq 0 $ for $2\leq j \leq n$. Here, $dH$ denotes the direction orthogonal to the plane $H$.

Then $\Omega \wedge \Phi =0$ implies that
    \begin{align*}
      0 &= c_{p,q}\Omega \wedge \Phi \wedge \overline{\Phi} \\
      &= c_{p,q} \alpha_1 \wedge ...\wedge \alpha_{n-p-q-1} \wedge \Phi \wedge \overline{\Phi} \wedge
      (\mathrm{i} \gamma_1  dH \wedge \overline{dH}+\mathrm{i}\sum_{j=2}^{n}\gamma_j  dH_j \wedge \overline{dH_j}) \\
      &= \gamma_1 Q_{H}(\Phi_{|H},\Phi_{|H})+\sum_{j=2}^{n}\gamma_j Q_{H_j}(\Phi_{|{H_j}},\Phi_{|{H_j}})
    \end{align*}

Lemma \ref{HRR_{n-1}ToHL_nStep1} implies that
  $$Q_{H}(\Phi_{|H},\Phi_{|H})\geq 0$$
  and
  $$ Q_{H_j}(\Phi_{|{H_j}},\Phi_{|{H_j}}) \geq 0$$
for any $2 \leq j \leq n$.
Then all the summands of the above equation must be zero. Together with $\gamma_1 >0$, we get
  \begin{equation*}
   Q_{H}(\Phi_{|H},\Phi_{|H})=0
  \end{equation*}

Moreover, it is not hard to check that the forms ${\alpha_1}_{|H},...,{\alpha_{n-p-q-1}}_{|H},{\alpha_{n-p-q}}_{|H}$ have the desired positivity for $\HR_{n-1}$:
\begin{itemize}
  \item it is clear that ${\alpha_{n-p-q}}_{|H}$ is $p+q$ positive on $H$;
  \item for $I \subset [n-p-q-1]$, if $I \in \mathcal{J}$, since $v_I \in \ker \alpha_I$ and $v_I \notin H$, ${\alpha_{I}}_{|H}$ is $|I|+p+q$ positive;
  \item for $I \subset [n-p-q-1]$, if $I \notin \mathcal{J}$, we have $\ker \alpha_I \subset \ker \alpha_{n-p-q}$, which implies that $$\ker \alpha_{I \cup \{n-p-q\}} = \ker \alpha_I.$$
      Since $\alpha_{I \cup \{n-p-q\}}$ is $|I|+p+q+1$ positive by assumption, $\alpha_I$ is also $|I|+p+q+1$ positive, yielding that ${\alpha_{I}}_{|H}$ is $|I|+p+q$ positive.
\end{itemize}

It follows from $\HR_{n-1}$ that $\Phi_{|H}=0$ for any $H\in \mathcal{S}$, as desired.

By the definition of $\mathcal{S}$, we have
  \begin{equation*}
    \mathcal{S}=\{H \in \Gr^1(\mathbb{C}^n)| \ker \alpha_{n-p-q} \subset H  \}\backslash
    \bigcup_{I\in \mathcal{J}} \{  H \in \Gr^1(\mathbb{C}^n)| v_I \in H  \} .
  \end{equation*}
Note that $v_I \notin \ker \alpha_{n-p-q}$ for any $I\in \mathcal{J}$. Hence $\{  H \in \Gr^1(\mathbb{C}^n)| v_I \in H  \}$ intersects
  properly with $$\{H \in \Gr^1(\mathbb{C}^n)| \ker \alpha_{n-p-q} \subset H  \}.$$
Therefore,
  \begin{equation*}
    \overline{\mathcal{S}}=\{H \in \Gr^1(\mathbb{C}^n)| \ker \alpha_{n-p-q} \subset H  \}.
  \end{equation*}

By continuity, we get that $\Phi_{|H}=0$ for any $H  \supset \ker \alpha_{n-p-q}$.

By assumption, $\alpha_{n-p-q}$ is $p+q+1$ positive, that is,
$$\dim \ker \alpha_{n-p-q} \leq n-1-p-q. $$
Hence for any $w_1,...,w_{p+q} \in \mathbb{C}^n$,
  \begin{equation*}
    \dim \Span\{w_1,...,w_{p+q},\ker \alpha_{n-p-q}\} \leq n-1.
  \end{equation*}
  We can extend this linear subspace to a hyperplane $H \in \overline{\mathcal{S}}$. Then
  \begin{equation*}
    \Phi(w_1,...,w_{p+q})=\Phi_{|H} (w_1,...,w_{p+q})=0.
  \end{equation*}
  Since $w_1,...,w_{p+q} \in \mathbb{C}^n$ are arbitrary, we get $\Phi=0$.

In the case when $\mathcal{J}= \emptyset$, we let
 \begin{equation*}
    \mathcal{S}=\{H \in \Gr^1(\mathbb{C}^n)| \ker \alpha_{n-p-q} \subset H \}.
  \end{equation*}
It is not hard to check that for any $H\in \mathcal{S}$, the forms ${\alpha_1}_{|H},...,{\alpha_{n-p-q-1}}_{|H},{\alpha_{n-p-q}}_{|H}$ have the desired positivity for $\HR_{n-1}$:
\begin{itemize}
  \item ${\alpha_{n-p-q}}_{|H}$ is $p+q$ positive on $H$;
  \item for any $I \subset [n-p-q-1]$, we have $\ker \alpha_I \subset \ker \alpha_{n-p-q}$, which implies that ${\alpha_{I}}_{|H}$ is $|I|+p+q$ positive.
\end{itemize}
Thus the above argument for $\mathcal{J}\neq \emptyset$ works in the same way.

This finishes the proof.
\end{proof}

Next we show that $\HL_n \Rightarrow \HR_n$, whose proof is carried out in Lemma \ref{HLtoLD} and Lemma \ref{HLtoHR}.

\begin{lem}\label{HLtoLD}
Notations and assumptions as in Proposition \ref{thrm HRR Linear},
assume $\HL_n$, then we have that
\begin{itemize}
  \item the Lefschetz decomposition holds, i.e., the
$Q$-orthogonal decomposition:
  \begin{equation*}
    \Lambda^{p,q}(\mathbb{C}^n)=\eta \wedge \Lambda^{p-1,q-1}(\mathbb{C}^n)\oplus P^{p,q}(\mathbb{C}^n),
  \end{equation*}
where we use the convention that $\Lambda^{p-1,q-1}(\mathbb{C}^n) =\{0\}$ when $p=0$ or $q=0$.
  \item $\dim P^{p,q}(\mathbb{C}^n) =\dim \Lambda^{p,q}(\mathbb{C}^n) - \dim \Lambda^{p-1,q-1}(\mathbb{C}^n)$.
\end{itemize}

\end{lem}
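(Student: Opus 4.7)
The plan is to reduce the Lefschetz decomposition to a second application of $\HL_n$, now at bidegree $(p-1, q-1)$ with the enlarged family of forms $\alpha_1, \ldots, \alpha_{n-p-q}, \eta, \eta$. The complete intersection of these $n-p-q+2$ forms is $\Omega \wedge \eta^2$, and $\HL_n$ at bidegree $(p-1, q-1)$ asserts that the associated map
\[
\Omega \wedge \eta^2 : \Lambda^{p-1, q-1}(\mathbb{C}^n) \longrightarrow \Lambda^{n-q+1, n-p+1}(\mathbb{C}^n)
\]
is an isomorphism. Since it factors as $(\Omega \wedge \eta) \circ L_\eta$, where $L_\eta$ denotes wedging with $\eta$, injectivity of $L_\eta$ on $\Lambda^{p-1,q-1}$ and surjectivity of $\Omega \wedge \eta$ on $\Lambda^{p,q}$ are obtained for free, and these are exactly the ingredients needed for the decomposition and the dimension formula. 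The degenerate cases $p=0$ or $q=0$ are handled separately: there $P^{p,q}(\mathbb{C}^n) = \Lambda^{p,q}(\mathbb{C}^n)$ trivially because $\Omega \wedge \eta$ already lives in a bidegree with a coordinate exceeding $n$.

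First I would verify that the enlarged family satisfies the positivity assumptions of $\HL_n$ at the shifted bidegree $(p-1, q-1)$, i.e., that for every non-empty subset $I$ of the $n-p-q+2$ indices, the associated sum is $|I| + (p-1) + (q-1)$ positive. A short case analysis on how many of the two copies of $\eta$ lie in $I$ suffices: the hypothesis that $\alpha_{I'}$ is $|I'| + p + q$ positive for non-empty $I' \subset [n-p-q]$ and that $\eta$ is $p+q$ positive, combined with the elementary fact that $\nd(A + B) \geq \max(\nd(A), \nd(B))$ for semi-positive forms (since $\ker(A+B) \subseteq \ker A \cap \ker B$), leaves room to spare because the required positivity is lowered by two.

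Next I would apply $\HL_n$ to conclude that $\Omega \wedge \eta^2$ is an isomorphism between $\Lambda^{p-1, q-1}(\mathbb{C}^n)$ and $\Lambda^{n-q+1, n-p+1}(\mathbb{C}^n)$, which have equal dimension by symmetry of binomial coefficients. Injectivity of $L_\eta$ gives $\dim (\eta \wedge \Lambda^{p-1, q-1}(\mathbb{C}^n)) = \dim \Lambda^{p-1, q-1}(\mathbb{C}^n)$; surjectivity of $\Omega \wedge \eta$ on $\Lambda^{p,q}$, combined with rank--nullity, yields $\dim P^{p,q}(\mathbb{C}^n) = \dim \Lambda^{p, q}(\mathbb{C}^n) - \dim \Lambda^{p-1, q-1}(\mathbb{C}^n)$, which is the second bullet. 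To see the sum is direct, if $\eta \wedge \Psi \in P^{p,q}(\mathbb{C}^n)$ then $\Omega \wedge \eta^2 \wedge \Psi = 0$, forcing $\Psi = 0$ by the isomorphism, so the intersection is trivial; the dimensions already force $\Lambda^{p,q}(\mathbb{C}^n) = \eta \wedge \Lambda^{p-1, q-1}(\mathbb{C}^n) \oplus P^{p,q}(\mathbb{C}^n)$.

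Finally, for $Q$-orthogonality of this decomposition, given $\Psi \in \Lambda^{p-1, q-1}(\mathbb{C}^n)$ and $\Phi \in P^{p,q}(\mathbb{C}^n)$, complex-conjugating the defining equation $\Omega \wedge \eta \wedge \Phi = 0$ (using that the $\alpha_i$ and $\eta$ are real) gives $\Omega \wedge \eta \wedge \overline{\Phi} = 0$, so $Q(\eta \wedge \Psi, \Phi) = c_{p, q}\, \Omega \wedge \eta \wedge \Psi \wedge \overline{\Phi} = 0$ after reordering the wedge. The only non-routine step is the positivity bookkeeping for the enlarged family at the shifted bidegree; the rest is formal linear algebra once the bigger $\HL_n$ has been brought to bear.
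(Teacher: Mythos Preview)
Your proposal is correct and follows essentially the same route as the paper: both apply $\HL_n$ at the shifted bidegree $(p-1,q-1)$ to the enlarged family $\alpha_1,\dots,\alpha_{n-p-q},\eta,\eta$ to make $\Omega\wedge\eta^2$ an isomorphism, then extract the injectivity of $L_\eta$, the surjectivity of $\Omega\wedge\eta$, the trivial intersection, and the dimension count from this single fact. Your write-up is simply more explicit about the positivity bookkeeping and the $Q$-orthogonality, which the paper dismisses as ``clear'' and ``straightforward by definition''.
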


\begin{proof}
It is clear that $\alpha_1,...,\alpha_{n-p-q},\eta,\eta$ have the desired positivity for $\HL_n$ on $\Lambda^{p-1,q-1}(\mathbb{C}^n)$, therefore by the assumption $\HL_n$,
  \begin{equation*}
    \Omega \wedge \eta^2: \Lambda^{p-1,q-1}(\mathbb{C}^n) \rightarrow \Lambda^{n-q+1,n-p+1}(\mathbb{C}^n)
  \end{equation*}
  is a linear isomorphism.
  In particular,
  \begin{equation*}
    \eta:\Lambda^{p-1,q-1}(\mathbb{C}^n) \rightarrow \Lambda^{p,q}(\mathbb{C}^n)
  \end{equation*}
  is an injection and
  \begin{equation*}
    \Omega \wedge \eta:\Lambda^{p,q}(\mathbb{C}^n) \rightarrow \Lambda^{n-q+1,n-p+1}(\mathbb{C}^n)
  \end{equation*}
  is a surjection with $\ker\Omega \wedge \eta=P^{p,q} (\mathbb{C}^n)$.

Again, since $\Omega \wedge \eta^2$ is an isomorphism, we have
  \begin{equation*}
    \eta\wedge \Lambda^{p-1,q-1}(\mathbb{C}^n) \cap P^{p,q} (\mathbb{C}^n) =\{0\}.
  \end{equation*}

Take the dimensions into account, one finds that
  \begin{equation*}
    \Lambda^{p,q}(\mathbb{C}^n)=\eta \wedge \Lambda^{p-1,q-1}(\mathbb{C}^n)\oplus P^{p,q}(\mathbb{C}^n).
  \end{equation*}

The orthogonality is straightforward by definition.

Finally the identity for $\dim P^{p,q} (\mathbb{C}^n)$ follows from the discussions above.
\end{proof}

\begin{lem}\label{HLtoHR}
Notations and assumptions as in Proposition \ref{thrm HRR Linear},
assume $\HL_n$, then the quadratic form
  \begin{equation*}
    Q(\Phi,\Psi)=c_{p,q}\Omega \wedge \Phi \wedge \overline{\Psi}
  \end{equation*}
is positive definite on the primitive space
  $$P^{p,q}(\mathbb{C}^n)=\{\Phi \in \Lambda^{p,q}(\mathbb{C}^n)| \Omega \wedge \eta \wedge \Phi =0\}.$$
\end{lem}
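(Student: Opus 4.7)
The plan is to deduce positive definiteness by a standard deformation–signature argument: connect the given tuple $(\alpha_1,\dots,\alpha_{n-p-q},\eta)$ to the diagonal K\"ahler case $(\omega_0,\dots,\omega_0)$ through a one-parameter family that preserves all positivity hypotheses, show that the quadratic form on the primitive space is nondegenerate all along the family (using $\HL_n$), and invoke local constancy of signature for a continuous family of nondegenerate Hermitian forms. The base point $t=0$ is the classical Hodge–Riemann case where positive definiteness on $P^{p,q}$ is known.

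Concretely, I would fix the standard K\"ahler form $\omega_0=\mathrm{i}\sum dz_i\wedge d\overline{z}_i$ and set $\alpha_i(t)=(1-t)\omega_0+t\alpha_i$ and $\eta(t)=(1-t)\omega_0+t\eta$ for $t\in[0,1]$. Since adding a K\"ahler form strictly shrinks the kernel (or keeps it trivial), the $(|I|+p+q)$-positivity of $\alpha_I(t)=\sum_{i\in I}\alpha_i(t)$ and the $(p+q)$-positivity of $\eta(t)$ are preserved for every $t\in[0,1]$. Thus the assumption of Proposition \ref{thrm HRR Linear} holds along the path, and by the assumed $\HL_n$ the intersection class $\Omega(t)=\alpha_1(t)\wedge\cdots\wedge\alpha_{n-p-q}(t)$ has the HL property for every $t$. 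By Lemma \ref{HLtoLD} applied at each $t$, the primitive subspace $P^{p,q}_t:=\ker\bigl(\Omega(t)\wedge\eta(t)\bigr)\subset\Lambda^{p,q}(\mathbb{C}^n)$ has the same dimension for all $t\in[0,1]$, so it varies continuously as a subspace of $\Lambda^{p,q}(\mathbb{C}^n)$ (e.g.\ via the continuous projection along the complementary summand $\eta(t)\wedge\Lambda^{p-1,q-1}(\mathbb{C}^n)$).

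Next I would verify that the Hermitian form $Q_t(\Phi,\Psi)=c_{p,q}\,\Omega(t)\wedge\Phi\wedge\overline{\Psi}$ is nondegenerate on $P^{p,q}_t$: if $\Phi\in P^{p,q}_t$ pairs trivially with all of $P^{p,q}_t$, then, using the $Q_t$-orthogonal Lefschetz decomposition of Lemma \ref{HLtoLD}, $\Phi$ pairs trivially with all of $\Lambda^{p,q}(\mathbb{C}^n)$, hence $\Omega(t)\wedge\Phi=0$ in $\Lambda^{n-q,n-p}(\mathbb{C}^n)$, which by the $\HL_n$ isomorphism forces $\Phi=0$. Transporting $Q_t$ to a fixed reference space via the continuous family of projections onto $P^{p,q}_t$ produces a continuous one-parameter family of nondegenerate Hermitian forms on a fixed finite-dimensional space, whose signature is therefore constant. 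At $t=0$, $\Omega(0)=\omega_0^{n-p-q}$ and $\eta(0)=\omega_0$, so $Q_0$ is positive definite on $P^{p,q}_0$ by the classical Hodge–Riemann relations on $\mathbb{C}^n$. By signature constancy, $Q_1=Q$ is positive definite on $P^{p,q}(\mathbb{C}^n)$, which is the desired conclusion.

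The only delicate point is the continuous dependence of $P^{p,q}_t$ on $t$ and the identification of $Q_t$ as a continuous family of Hermitian forms on a fixed space; this is handled by using that $\dim P^{p,q}_t$ is constant (Lemma \ref{HLtoLD}), so the Lefschetz projector onto $P^{p,q}_t$ along $\eta(t)\wedge\Lambda^{p-1,q-1}(\mathbb{C}^n)$ depends continuously on the entries of $\alpha_i(t)$ and $\eta(t)$. Everything else is either the preservation of the positivity hypotheses under convex combination with $\omega_0$, the formal nondegeneracy argument above, or the classical base case, none of which requires new input.
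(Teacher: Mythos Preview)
Your proposal is correct and is essentially the same deformation--signature argument as the paper's proof: connect $(\alpha_1,\dots,\alpha_{n-p-q},\eta)$ to the diagonal K\"ahler tuple by the convex homotopy $(1-t)\omega+t\alpha_i$, use $\HL_n$ and Lemma~\ref{HLtoLD} to get constant dimension of $P^{p,q}_t$ and nondegeneracy of $Q_t$ on it, and conclude by signature constancy from the classical Hodge--Riemann endpoint. The only difference is cosmetic (you put the K\"ahler endpoint at $t=0$ rather than $t=1$) and you spell out the nondegeneracy and continuity details that the paper leaves implicit.
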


\begin{proof}
Fix a K\"ahler form $\omega$ and for $t\in [0,1]$ set
  \begin{equation*}
    \alpha_j^t=(1-t)\alpha_j+t\omega, \eta^t=(1-t)\eta +t\omega,\Omega^t=\alpha_1^t \wedge...\wedge \alpha_{n-p-q}^t
  \end{equation*}
  and
  \begin{equation*}
    Q_t(\Phi,\Psi)= c_{p,q}\Omega^t \wedge \Phi\wedge \overline{\Psi}.
  \end{equation*}

By Lemma \ref{HLtoLD}, we have the $Q_t$-orthogonal decomposition:
  \begin{equation*}
    \Lambda^{p,q}(\mathbb{C}^n)=\eta^t \wedge \Lambda^{p-1,q-1}(\mathbb{C}^n) \oplus P^{p,q}_t,
  \end{equation*}
  where
  \begin{equation*}
    P^{p,q}_t=\{ \Phi \in \Lambda^{p,q}(\mathbb{C}^n) | \Omega^t \wedge \eta^t \wedge \Phi =0\}.
  \end{equation*}

By Lemma \ref{HLtoLD} again, the dimension of $P^{p,q}_t$ is stable under deformation.

Since $Q_1$ is positive definite on $P^{p,q}_1$ by the classical Hodge theory and the quadratic form $Q_t$ is non-degenerate in the course of deformation by the assumption $\HL_n$,  it follows that $Q_0$ is positive definite on $P^{p,q}_0$.

This finishes the proof.
\end{proof}

We have finished the proof that the positivity assumption implies HL or HR property.

Next we show that the the positivity assumption is also necessary.

\begin{prop}\label{necessary condition of HL}
  Let $0\leq p,q\leq p+q \leq n$ and $\alpha_1,...,\alpha_{n-p-q}\in \Lambda^{1,1}_{\mathbb{R}}(\mathbb{C}^n)$ be semi-positive $(1,1)$ forms.
  If there exists some $I \subset [n-p-q]$ such that $\alpha_I$ is not $|I|+p+q$ positive, then the form $$\alpha_1\wedge...\wedge \alpha_{n-p-q}$$ does not have HL property.
\end{prop}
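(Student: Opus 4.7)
The plan is, for any $I$ violating the stated positivity condition, to exhibit a non-zero $\Phi \in \Lambda^{p,q}(\mathbb{C}^n)$ with $\Omega \wedge \Phi = 0$; this shows that the wedge map $\Omega : \Lambda^{p,q}(\mathbb{C}^n) \to \Lambda^{n-q,n-p}(\mathbb{C}^n)$ is not injective, so cannot have HL property. Set $K = \ker \alpha_I$ and $k = \codim K$, so the hypothesis reads $k \leq |I|+p+q-1$. Since each $\alpha_i$ ($i \in I$) is semi-positive with $\alpha_i(v,v) \leq \alpha_I(v,v) = 0$ for $v \in K$, we have $K \subset \ker \alpha_i$, and I may choose coordinates $(z_1,\ldots,z_n)$ on $\mathbb{C}^n$ with $K = \Span(\partial_{z_{k+1}},\ldots,\partial_{z_n})$; then each such $\alpha_i$ involves only $dz_1,\ldots,dz_k$ and their conjugates. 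Writing $\Omega_I = \prod_{i\in I}\alpha_i$ and $\Omega' = \prod_{j\notin I}\alpha_j$, the form $\Omega_I$ lives in $\Lambda^{|I|,|I|}(\mathbb{C}^k) \subset \Lambda^{|I|,|I|}(\mathbb{C}^n)$, and $\Omega = \Omega_I \wedge \Omega'$, so it suffices to find $\Phi \neq 0$ with $\Omega_I \wedge \Phi = 0$.

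I would then split into easy and hard cases. If $|I| > k$, then $\Omega_I$ vanishes outright as an $(|I|,|I|)$-form on $\mathbb{C}^k$, so $\Omega = 0$ and any non-zero $\Phi$ works. Otherwise $|I| \leq k$; if additionally $\max(p,q) \geq k-|I|+1$, say $p \geq k-|I|+1$, take
\begin{equation*}
\Phi = dz_1 \wedge \cdots \wedge dz_p \wedge d\bar{z}_{j_1} \wedge \cdots \wedge d\bar{z}_{j_q}
\end{equation*}
for any distinct $j_1,\ldots,j_q \in [n]$. In each monomial $c_{JJ'}\, dz_J \wedge d\bar{z}_{J'}$ of $\Omega_I$, the index set $J \subset [k]$ has $|J|=|I| > k-p$, so $J \cap [p] \neq \emptyset$ by pigeonhole, whence $dz_J \wedge dz_1 \wedge \cdots \wedge dz_p = 0$ and $\Omega_I \wedge \Phi = 0$.

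The main obstacle is the remaining case $|I| \leq k$ and $p,q \leq k-|I|$, in which the wedge map $\Omega_I \wedge : \Lambda^{p,q}(\mathbb{C}^k) \to \Lambda^{|I|+p,|I|+q}(\mathbb{C}^k)$ lands in a non-trivial target and pigeonhole alone is insufficient. Here I would prove the elementary binomial inequality
\begin{equation*}
\binom{k}{p}\binom{k}{q} > \binom{k}{|I|+p}\binom{k}{|I|+q} \quad \text{whenever}\quad p+q > k-|I|,\ |I|+p \leq k,\ |I|+q \leq k,
\end{equation*}
verified by writing the ratio of the two sides as $\prod_{i=1}^{|I|} \frac{(p+i)(q+i)}{(k-|I|-p+i)(k-|I|-q+i)}$ and using that the two pairs $(p+i,\, k-|I|-p+i)$ and $(q+i,\, k-|I|-q+i)$ share the common sum $k-|I|+2i$, which reduces each factor to $(k-|I|+2i)(p+q-(k-|I|))/(CD) + 1$; each factor thus exceeds $1$ precisely when $p+q > k-|I|$. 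Since the hypothesis gives $p+q \geq k-|I|+1$, the source of $\Omega_I \wedge$ has strictly greater dimension than its target, so there is a non-zero $\psi \in \Lambda^{p,q}(\mathbb{C}^k)$ with $\Omega_I \wedge \psi = 0$. Viewing $\psi$ as an element $\Phi \in \Lambda^{p,q}(\mathbb{C}^n)$ using only indices in $[k]$ completes the construction, since $\Omega \wedge \Phi = \Omega' \wedge (\Omega_I \wedge \Phi) = 0$.
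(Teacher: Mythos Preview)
Your proof is correct and follows essentially the same idea as the paper's: restrict to the subspace cut out by $\ker\alpha_I$, observe that $\Omega_I=\prod_{i\in I}\alpha_i$ lives there, and use a dimension count to force the wedge map to have a kernel. The only real difference is bookkeeping. The paper pads up to the \emph{maximal} possible rank, working in $\mathbb{C}^{|I|+p+q-1}$ regardless of the actual rank of $\alpha_I$; this collapses your three cases into just two ($p=0$ or $q=0$ versus $p,q\geq 1$), and the binomial inequality becomes the single-line observation
\[
\binom{|I|+p+q-1}{p}\binom{|I|+p+q-1}{q} > \binom{|I|+p+q-1}{|I|+p}\binom{|I|+p+q-1}{|I|+q},
\]
which the paper states without further comment. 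Your choice to work in $\mathbb{C}^k$ with $k=\operatorname{rank}\alpha_I$ forces the extra Case~1 ($|I|>k$) and Case~2 ($\max(p,q)>k-|I|$), but in exchange you give an explicit kernel element in Case~2 and a clean telescoping verification of the binomial inequality in Case~3 that the paper omits. Neither approach is materially harder; the paper's padding trick is a minor streamlining worth noting.
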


\begin{proof}
We may suppose $I=[k]$ for some $1\leq k \leq n-p-q$ and assume that $$\alpha_{[k]}=\alpha_1+...+\alpha_k$$ is not $k+p+q$ positive.

Without loss of generalities, by taking a coordinates transform we can diagonalize $\alpha_{[k]}$ as following:
  \begin{equation*}
    \alpha_{[k]}=\mathrm{i}\sum_{j=1}^{k+p+q-1}\gamma_j dz_j\wedge d\overline{z_j},\ \gamma_j \geq 0.
  \end{equation*}
Since $\ker \alpha_{[k]} \subset \ker \alpha_j$ for any $j \in I$, the product of $\alpha_j$ takes the form
  \begin{equation*}
    \alpha_1 \wedge...\wedge \alpha_k = \sum_{\substack{I,J\subset [k+p+q-1] \\ |I|=|J|=k}}u_{I,J} dz_I\wedge d\overline{z_J}.
  \end{equation*}

Consider the linear subspaces:
\begin{align*}
  H_1 = \Span \{dz_I\wedge d\overline{z_J}| I,J \subset [k+p+q-1] ,|I|=p,|J|=q \}
  \subset \Lambda^{p,q}(\mathbb{C}^n)
\end{align*}
and
\begin{align*}
   H_2 =\Span \{dz_I\wedge d\overline{z_J}| I,J \subset [k+p+q-1] ,|I|=p+k,|J|=q+k \}
   \subset \Lambda^{p+k,q+k}(\mathbb{C}^n).
\end{align*}

Note that $H_1$ and $H_2$ are stablized by $\alpha_1 \wedge...\wedge \alpha_k$ in the sense that
\begin{equation*}
  \alpha_1 \wedge...\wedge \alpha_k \wedge H_1 \subset H_2.
\end{equation*}

When $p=0$ or $q=0$, it is easy to see that $\alpha_1 \wedge...\wedge \alpha_k \wedge H_1 =\{0\}$, therefore $\alpha_1 \wedge...\wedge \alpha_k$ is not injective on $\Lambda^{p,q}(\mathbb{C}^n)$.

Next we consider the case when $p\geq 1, q\geq 1$.
Note that
\begin{align*}
  \dim H_1 &= \binom{k+p+q-1}{p}\binom{k+p+q-1}{q},\\
  \dim H_2 &= \binom{k+p+q-1}{p+k}\binom{k+p+q-1}{q+k}.
\end{align*}
It follows that $$\dim H_1 > \dim H_2$$
and thus
\begin{equation*}
  \alpha_1 \wedge...\wedge \alpha_k: \Lambda^{p,q}(\mathbb{C}^n)\rightarrow \Lambda^{p+k,q+k}(\mathbb{C}^n)
\end{equation*}
is not injective. A fortiori, $\alpha_1\wedge...\wedge \alpha_{n-p-q}$ is not injective.
\end{proof}

Together with Proposition \ref{thrm HRR Linear} and Proposition \ref{necessary condition of HL}, we finish the proof of Theorem \ref{hardlef tori}.

\begin{rmk}\label{panov summary}
Let $\Herm_n$ be the space of $n\times n$ Hermitian matrices, and let $A_1,...,A_n \in \Herm_n$. The mixed discriminant of $A_1,...,A_n$ is given by
  \begin{equation*}
    D(A_1,...,A_n)=\frac{1}{n!}\frac{\partial^n }{\partial t_1... \partial t_n} \det(t_1A_1+... + t_nA_n).
  \end{equation*}

The matrices $A_1,...,A_{n-2}$ induce a quadratic form $Q$ on $\Herm_n$:
  \begin{align*}
    Q:&\Herm_n\times \Herm_n \rightarrow \mathbb{R}\\
    &(A,B) \mapsto D(A,B,A_1,...,A_{n-2})
  \end{align*}

If all $A_1,...,A_{n-2}$ are positive definite, then $Q$ is a Lorentzian form (i.e., a quadratic form with signature $(+,-,\cdots,-)$), which was shown in the seminal work of Alexandrov \cite{alexandroff1938theorie} (see also \cite{alexandrovSelectedworksIconvexbodies}).

Panov \cite{Panov1987ONSP} generalized this important result to semi-positive real symmetric matrices $A_1,...,A_{n-2}$ under certain non-degenerate assumption and proved that the assumption is also necessary. While Panov's results are only stated for real symmetric matrices, it also works for Hermitian matrices without any difficulty.

We give a brief summary of Panov's work.
Denote the number of positive eigenvalues of a semi-positive matrix $A$ by $r(A)$.
Assume that $A_1,...,A_{n}\in \Herm_n$ are semi-positive, then the following equivalence holds:
\begin{itemize}
    \item $D(A_1 ,A_2,...,A_{n})>0$;
    \item $r(A_I)\geq |I|,\ \text{for any}\ I\subset [n]$.
\end{itemize}
and the following statements are equivalent:
  \begin{itemize}
    \item $Q=D(- ,- ,A_1,...,A_{n-2})$ is a Lorentzian form on $\Herm_n$;
    \item $r(A_I)\geq |I|+2,\ \text{for any}\ I \subset [n-2]$.
  \end{itemize}
Furthermore, the work \cite{Panov1987ONSP} also give a thorough description of the degeneration of $Q$ when $A_1,...,A_{n-2}$ do not satisfy the above non-degenerate condition.

Note that we have the isomorphism
\begin{align*}
    \phi: &\Herm_n \rightarrow \Lambda^{1,1}_{\mathbb{R}}(\mathbb{C}^n),\\
    &A=[a_{jk}] \mapsto \mathrm{i} \sum_{j,k =1} ^n a_{jk}dz_j \wedge d\overline{z_k}.
\end{align*}
Denote $\alpha_k = \phi (A_k)$, then we have (up to a constant volume form on $\mathbb{C}^n$)
\begin{equation*}
  D(A_1,...,A_n)= \alpha_1 \wedge ...\wedge \alpha_n.
\end{equation*}

By the above identification, one can use the $p=q=1$ and $p=q=0$ cases of Theorem \ref{hardlef tori} to deduce Panov's results.
\end{rmk}

In the following, we discuss some consequences of Theorem \ref{hardlef tori}.

The first is an application to the case $p=q=0$ that we mentioned in Remark \ref{panov summary}. We restate it using positivity. Note that the semi-positive $(1,1)$ forms $\alpha_1,...,\alpha_n \in \Lambda^{1,1}_{\mathbb{R}}(\mathbb{C}^n)$ has HL property if and only if
$$\alpha_1\wedge ... \wedge \alpha_n >0.$$
Therefore, applying Theorem \ref{hardlef tori}, we get a description of positivity of complete intersection of semi-positive real $(1,1)$ forms.

\begin{cor}\label{interPos linear}
  Let $\alpha_1,... ,\alpha_n \in \Lambda^{1,1}_{\mathbb{R}}(\mathbb{C}^n)$ be semi-positive $(1,1)$ forms. Then the following statements are equivalent:
  \begin{itemize}
    \item $\alpha_1\wedge ... \wedge \alpha_n >0$
    \item $\alpha_I$ is $|I|$ positive, for any $I \subset [n]$.
  \end{itemize}
\end{cor}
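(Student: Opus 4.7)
The plan is to deduce the corollary as a direct translation of Theorem \ref{hardlef tori} in the case $p=q=0$ from the torus setting into the purely linear setting of constant-coefficient forms on $\mathbb{C}^n$. The main work is bookkeeping rather than new content: I will match each half of the stated equivalence with the corresponding half of Theorem \ref{hardlef tori}, using the dictionary between nef classes on a compact complex torus and semi-positive $(1,1)$ forms with constant coefficients that is recalled in Section~\ref{sec pre}.

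First, I would fix a lattice $\Gamma \subset \mathbb{C}^n$ and pass to the compact complex torus $X = \mathbb{C}^n/\Gamma$. Under the isomorphism $H^{p,q}(X,\mathbb{C}) \cong \Lambda^{p,q}(\mathbb{C}^n)$ by harmonic representatives with respect to the standard Euclidean metric, each semi-positive $\alpha_i \in \Lambda^{1,1}_{\mathbb{R}}(\mathbb{C}^n)$ corresponds to a nef class on $X$, and wedge product corresponds to cup product. Next, I would specialize Theorem \ref{hardlef tori} to $(p,q)=(0,0)$: there the hard Lefschetz map is
\begin{equation*}
    \Omega : \Lambda^{0,0}(\mathbb{C}^n) \rightarrow \Lambda^{n,n}(\mathbb{C}^n),
\end{equation*}
which is a map between one-dimensional spaces and hence is an isomorphism iff $\Omega = \alpha_1 \wedge \cdots \wedge \alpha_n$ is nonzero. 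Since each $\alpha_i$ is semi-positive, $\Omega$ is a non-negative multiple of the volume form, so nonzero is equivalent to $\Omega > 0$. This shows that the first bullet of the corollary is exactly the HL part of Theorem \ref{hardlef tori} at $p=q=0$.

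For the other bullet, I would invoke the lemma in Section~\ref{sec pre} identifying $\nd(\alpha_I)$ (for the nef class associated to $\alpha_I$) with the number of positive eigenvalues of the constant-coefficient semi-positive form $\alpha_I$, equivalently with $\codim \ker \alpha_I$. Combined with the earlier lemma characterizing $m$-positivity of a semi-positive form by $\codim \ker \alpha \geq m$, this gives the equivalence
\begin{equation*}
    \alpha_I \text{ is } |I| \text{ positive} \quad \Longleftrightarrow \quad \nd(\alpha_I) \geq |I|,
\end{equation*}
for every $I \subset [n]$. Substituting this into the second condition of Theorem \ref{hardlef tori} (with $p=q=0$) yields exactly the second bullet of the corollary.

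There is no real obstacle here: both directions of the equivalence follow once the identifications are set up. The only subtle point is to be precise about the boundary cases, e.g.\ $I = \emptyset$ (where both sides are vacuous) and $|I|=n$ (where the statement for $I=[n]$ recovers the first bullet), and to record that semi-positivity, together with $\Omega \neq 0$, upgrades to strict positivity of the top form. Once this is checked, the corollary is an immediate consequence of Theorem \ref{hardlef tori}.
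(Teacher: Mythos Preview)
Your proposal is correct and follows essentially the same approach as the paper: both obtain the corollary by specializing Theorem~\ref{hardlef tori} to $p=q=0$, noting that HL property on $\Lambda^{0,0}(\mathbb{C}^n)$ amounts to $\alpha_1\wedge\cdots\wedge\alpha_n>0$, and translating the numerical dimension condition $\nd(\alpha_I)\geq |I|$ into $|I|$-positivity of $\alpha_I$ via the dictionary in Section~\ref{sec pre}. The paper states this in one line before the corollary rather than writing out the bookkeeping, but the content is the same.
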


The second is an application to the products of forms satisfying HL property.

\begin{cor}
Let $\alpha_1,...,\alpha_k, \beta_1,...,\beta_l \in \Lambda^{1,1}_{\mathbb{R}}(\mathbb{C}^n)$ be semi-positive $(1,1)$ forms with $k+l\leq n$. Denote $\Phi=\alpha_1 \wedge...\wedge\alpha_k$ and $\Psi=\beta_1 \wedge...\wedge\beta_l$. Assume that $\Phi, \Psi$ have HL property, then $\Phi \wedge \Psi$ also has HL property.

\end{cor}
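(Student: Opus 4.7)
The plan is to apply the characterization of HL property in the linear setting---the content of Propositions \ref{thrm HRR Linear} and \ref{necessary condition of HL}---to translate all three HL hypotheses/conclusion into conditions on partial sums, and then verify that the conditions for $\Phi$ and $\Psi$ individually imply the condition for $\Phi\wedge\Psi$.

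Concretely, for semi-positive $(1,1)$-forms with $k=n-p-q$ (so the relevant bidegree satisfies $p+q=n-k$), the two propositions above combine to give: $\Phi=\alpha_1\wedge\cdots\wedge\alpha_k$ has HL property if and only if for every nonempty $I\subset[k]$, the semi-positive form $\alpha_I$ is $|I|+n-k$ positive; equivalently $\nd(\alpha_I)=\codim\ker\alpha_I\geq |I|+n-k$. The same characterization applies to $\Psi$ (with $l$ in place of $k$) and to $\Phi\wedge\Psi$ (with $k+l$ in place of $k$). Hence the task reduces to the following: for every nonempty $K\subset[k+l]$, writing $K=I\sqcup J$ with $I\subset[k]$ and $J\subset[l]$, show that
\[
  \nd(\alpha_I+\beta_J)\;\geq\;|I|+|J|+n-k-l.
\]

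The key input is the monotonicity of numerical dimension under addition of semi-positive forms: if $\alpha,\beta$ are semi-positive, then $\ker(\alpha+\beta)=\ker\alpha\cap\ker\beta$, so $\nd(\alpha+\beta)\geq\max(\nd\alpha,\nd\beta)$. Since $K$ is nonempty, we may assume without loss of generality that $I\neq\emptyset$ (the case $J\neq\emptyset$ is symmetric). Then
\[
  \nd(\alpha_I+\beta_J)\;\geq\;\nd(\alpha_I)\;\geq\;|I|+n-k\;\geq\;|I|+|J|+n-k-l,
\]
where the middle inequality uses the hypothesis that $\Phi$ has HL, and the last step uses $|J|\leq l$. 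This yields the required condition and completes the proof.

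I do not foresee a serious obstacle: everything reduces to the equivalence ``HL $\Leftrightarrow$ numerical dimension of partial sums is sufficiently large'' already established in this section, plus the elementary fact that adjoining more semi-positive summands cannot decrease numerical dimension. The only bookkeeping point to be careful about is that in the equivalence one should quantify over \emph{nonempty} index sets (for the empty set the condition degenerates), which matches the decomposition $K=I\sqcup J$ with $K$ nonempty.
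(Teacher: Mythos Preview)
Your proof is correct and follows essentially the same route as the paper's. The paper phrases the criterion in terms of $\dim\ker\alpha_I\leq k-|I|$ rather than $\nd(\alpha_I)\geq |I|+n-k$, and then bounds $\dim\ker(\alpha_I+\beta_J)\leq\min(k-|I|,l-|J|)\leq k+l-|I|-|J|$; this is exactly your monotonicity step $\nd(\alpha_I+\beta_J)\geq\nd(\alpha_I)$ rewritten on the kernel side, together with the same arithmetic $|J|\leq l$.
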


\begin{proof}

By Theorem \ref{hardlef tori}, $\Phi$ has HL property on $\Lambda^{n-k}(\mathbb{C}^n)$ if and only if for any subset $I\subset [k]$, $$\dim \ker \alpha_I \leq k-|I|.$$
Similarly, $\Psi$ has HL property on $\Lambda^{n-l}(\mathbb{C}^n)$ if and only if for any subset $J\subset [l]$,
$$\dim \ker \beta_J \leq l-|J|.$$

Therefore, for any subset $I\subset [k]$ and any subset $J\subset [l]$,
\begin{equation*}
  \dim \ker (\alpha_I + \beta_J) \leq \min (k-|I|, l-|J|) \leq k+l - (|I| +|J|).
\end{equation*}
Together with
$$\dim \ker \alpha_I \leq k+l-|I|$$
for any subset $I\subset [k]$ and
$$\dim \ker \beta_J \leq k+l-|J|$$
any subset $J\subset [l]$, we obtain that $\Phi \wedge \Psi$ also has HL property on $\Lambda^{n-k-l}(\mathbb{C}^n)$ by
Theorem \ref{hardlef tori} again.

\end{proof}

Using the linear version, we obtain new kinds of cohomology classes on an arbitrary compact K\"ahler manifold, which have HL and HR properties. Recall that we need to prove:

\begin{cor}[=Corollary \ref{cor kahler hrr intro}]\label{cor global}
Let X be a compact K\"ahler manifold of dimension $n$ and $0\leq p,q\leq p+q\leq n$. Let $\alpha_1,...,\alpha_{n-p-q}, \eta \in H^{1,1}(X,\mathbb{R})$ be nef classes on $X$. Denote $\Omega=\alpha_1\cdot...\cdot\alpha_{n-p-q}$.
Assume that there exists a smooth representative $\widehat{\alpha}_i$ in each class $\alpha_i$, such that for any subset $I \subset [n-p-q]$, $\widehat{\alpha}_I$ is $|I|+p+q$ positive in the sense of forms, and that there exists a smooth representative $\widehat{\eta}$ in $\eta$, such that $\widehat{\eta}$ is $p+q$ positive, then
\begin{itemize}
  \item the complete intersection class $\Omega$ has HL property;
  \item the pair $(\Omega, \eta)$ has HR property.
\end{itemize}
\end{cor}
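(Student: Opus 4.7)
The plan is to deduce Corollary \ref{cor global} from the pointwise linear Proposition \ref{thrm HRR Linear} together with the classical mixed Hodge--Riemann theorem of \cite{DN06,cattanimixedHRR}, via a deformation-and-continuity argument in the spirit of Lemmas \ref{HLtoLD} and \ref{HLtoHR}. Fix a K\"ahler form $\widehat{\omega}$ on $X$ and, for $t\in[0,1]$, set
$$\widehat{\alpha}_i^{\,t}=(1-t)\widehat{\omega}+t\widehat{\alpha}_i,\qquad \widehat{\eta}^{\,t}=(1-t)\widehat{\omega}+t\widehat{\eta},$$
with corresponding cohomology classes $\alpha_i^{\,t},\eta^{\,t}$ and product representative $\widehat{\Omega}^{\,t}=\widehat{\alpha}_1^{\,t}\wedge\cdots\wedge\widehat{\alpha}_{n-p-q}^{\,t}$ of $\Omega^{\,t}$. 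For $t\in[0,1)$ every $\widehat{\alpha}_i^{\,t}(x)$ and $\widehat{\eta}^{\,t}(x)$ is positive definite at each $x\in X$, so $\alpha_i^{\,t}$ and $\eta^{\,t}$ are K\"ahler classes; the classical mixed Hodge--Riemann theorem then gives HL for $\Omega^{\,t}$ and HR for $(\Omega^{\,t},\eta^{\,t})$ throughout $[0,1)$, and at $t=1$ we recover the setting of the corollary.

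I then propagate HL and HR across the endpoint $t=1$. Let $Q_t(\phi,\psi)=c_{p,q}\,\Omega^{\,t}\cdot\phi\cdot\overline{\psi}$ be the Hermitian form on $H^{p,q}(X,\mathbb{C})$ and $P^{p,q}_t=\ker(\Omega^{\,t}\cdot\eta^{\,t}\cdot-)$ the primitive subspace. I proceed by induction on $\min(p,q)$. In the base case $\min(p,q)=0$, the pointwise primitive subspace is all of $\Lambda^{p,q}(T_xX)$, so the pointwise HR of Proposition \ref{thrm HRR Linear} yields genuine pointwise positivity of $c_{p,q}\widehat{\Omega}^{\,1}(x)\wedge\psi\wedge\overline{\psi}$ for every $\psi\in\Lambda^{p,q}(T_xX)$; applying this to a $d$-harmonic representative of a putative class in $\ker(\Omega^{\,1}\cdot-)$ and using that $\widehat{\Omega}^{\,1}\wedge\widehat{\phi}$ is then $d$-exact, Stokes' theorem forces the harmonic form to vanish pointwise. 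In the inductive step, the corollary at level $(p-1,q-1)$---whose positivity hypotheses are implied (indeed strengthened) by those at level $(p,q)$---guarantees that $\Omega^{\,1}\cdot\eta^{\,1}\cdot\eta^{\,1}\colon H^{p-1,q-1}(X)\to H^{n-q+1,n-p+1}(X)$ is an isomorphism at $t=1$, which by the cohomological counterpart of Lemma \ref{HLtoLD} pins down $\dim P^{p,q}_1=h^{p,q}-h^{p-1,q-1}$ and matches the locally constant value on $[0,1)$. With the primitive dimension constant on all of $[0,1]$, the family $Q_t|_{P^{p,q}_t}$ varies continuously; once non-degeneracy of $Q_1$ at $t=1$ is established, positive-definiteness for $t<1$ propagates to $t=1$ by continuity of signature, yielding HR, and HL follows.

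The heart of the argument, and the main obstacle, is thus establishing non-degeneracy of $Q_1$ on $P^{p,q}_1$ when $\min(p,q)\geq 1$. Suppose $0\neq\phi\in P^{p,q}_1$ has $Q_1(\phi,\phi)=0$ and let $\widehat{\phi}$ be its $d$-harmonic representative with respect to $\widehat{\omega}$. Since $\widehat{\Omega}^{\,1}$ is $d$-closed and the cohomology class $\Omega^{\,1}\cdot\phi$ is represented by the form $\widehat{\Omega}^{\,1}\wedge\widehat{\phi}$, one computes $\int_X c_{p,q}\widehat{\Omega}^{\,1}\wedge\widehat{\phi}\wedge\overline{\widehat{\phi}}=0$ via Stokes. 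Pointwise, the forms $\widehat{\alpha}_i(x),\widehat{\eta}(x)$ satisfy the hypotheses of Proposition \ref{thrm HRR Linear}; but the pointwise HR gives positivity only on the pointwise primitive part of $\widehat{\phi}(x)$, whereas a global $d$-harmonic form need not be pointwise primitive relative to the mixed Lefschetz operator $\widehat{\Omega}^{\,1}\wedge\widehat{\eta}^{\,1}\wedge-$. Bridging this gap requires an integration-by-parts/Bochner-type identity, adapted from \cite{xiaomixedHRR}, that exploits the semi-positivity of the $\widehat{\alpha}_i$ together with the Lefschetz decomposition relative to $\widehat{\omega}$ to control the non-primitive components of $\widehat{\phi}$ and reduce the global vanishing of $Q_1(\phi,\phi)$ to the pointwise HL of Proposition \ref{thrm HRR Linear}.
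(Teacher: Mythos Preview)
Your proposal has a genuine gap --- one you yourself name but do not close. The entire argument hinges on showing that $Q_1$ is non-degenerate on $P^{p,q}_1$ (equivalently, that $\Omega$ has HL at level $(p,q)$), and your final paragraph only says that ``bridging this gap requires an integration-by-parts/Bochner-type identity, adapted from \cite{xiaomixedHRR}''. That is a description of what is needed, not a proof. Without it, continuity of signature cannot be invoked at $t=1$: a family of positive-definite forms may well degenerate at an endpoint, and your induction on $\min(p,q)$ only stabilizes $\dim P^{p,q}_t$, it does not prevent $Q_1$ from acquiring a kernel on $P^{p,q}_1$.

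The paper fills exactly this hole with a device you did not use: Lemma~\ref{ddc eq}, a $dd^c$-equation in the style of \cite{DN06}. Given $\Phi\in P^{p,q}(X,\mathbb{C})$ with smooth representative $\widehat{\Phi}$, one solves $\widehat{\Omega}\wedge\widehat{\eta}\wedge dd^c\widehat{F}=\widehat{\Omega}\wedge\widehat{\eta}\wedge\widehat{\Phi}$ (solvability following from the pointwise HL of Proposition~\ref{thrm HRR Linear}), so that $\widehat{\Phi}-dd^c\widehat{F}$ is \emph{pointwise} primitive relative to $(\widehat{\Omega},\widehat{\eta})$. Then the pointwise HR of Proposition~\ref{thrm HRR Linear} applies directly at every $x\in X$, Stokes gives $Q(\Phi,\Phi)\geq 0$, and equality forces $\widehat{\Phi}-dd^c\widehat{F}=0$ pointwise, hence $\Phi=0$. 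This proves HR (and hence HL) directly at $t=1$: no deformation, no induction on $(p,q)$, no appeal to \cite{DN06,cattanimixedHRR} for the global statement. Your harmonic-representative approach fails precisely because a $\widehat{\omega}$-harmonic form is not in general pointwise primitive for the mixed operator; the $dd^c$-correction is exactly what replaces the missing Bochner-type identity you allude to.
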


The proof from the local setting to the global setting is standard, following essentially from \cite{DN06} (see also \cite{xiaomixedHRR}). For reader's convenience, we give a sketch of the key argument.

We denote the space of smooth complex $(p,q)$ forms on $X$ by $\Lambda^{p,q}(X, \mathbb{C})$, and the space of primitive $(p,q)$ classes with respect to $(\Omega, \eta)$ by $P^{p,q}(X, \mathbb{C})$.

The global setting can be reduced to the local setting by solving a $dd^c$-equation.

\begin{lem}\label{ddc eq}
Notations as in Corollary \ref{cor global}, denote $\widehat{\Omega} = \widehat{\alpha}_1 \wedge...\wedge \widehat{\alpha}_{n-p-q}$, then for any smooth $d$-closed form $\widehat{\Phi} \in \Lambda^{p, q}(X, \mathbb{C})$ with its class $\{\widehat{\Phi}\}\in P^{p, q} (X, \mathbb{C})$, there is a smooth $(p-1, q-1)$ form $\widehat{F}$ such that
\begin{equation}\label{eq primitive}
  \widehat{\Omega} \wedge \widehat{\eta}\wedge dd^c \widehat{F} =  \widehat{\Omega} \wedge \widehat{\eta}\wedge \widehat{\Phi}.
\end{equation}
\end{lem}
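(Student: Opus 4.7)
The plan is to combine the $\partial\bar\partial$-lemma on the compact K\"ahler manifold $X$ with the pointwise Lefschetz isomorphisms supplied by Proposition \ref{thrm HRR Linear} applied at each point of $X$ to the smooth representatives $\widehat{\alpha}_i$ and $\widehat{\eta}$.

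First, since $\{\widehat{\Phi}\}\in P^{p,q}(X,\mathbb{C})$, the cohomology class of $\widehat{\Omega}\cdot\widehat{\eta}\cdot\widehat{\Phi}$ vanishes in $H^{n-q+1,n-p+1}(X,\mathbb{C})$. As $\widehat{\Omega}$, $\widehat{\eta}$, $\widehat{\Phi}$ are smooth and $d$-closed, the product $\widehat{\Omega}\wedge\widehat{\eta}\wedge\widehat{\Phi}$ is a smooth $d$-exact $(n-q+1,n-p+1)$-form. The $\partial\bar\partial$-lemma then yields a smooth $(n-q,n-p)$-form $\widehat{G}$ with $\widehat{\Omega}\wedge\widehat{\eta}\wedge\widehat{\Phi}=dd^c\widehat{G}$.

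Next, the positivity hypotheses on the $\widehat{\alpha}_i$ are precisely those required by Proposition \ref{thrm HRR Linear} at parameters $(p,q)$, applied pointwise; hence wedging by $\widehat{\Omega}(x)$ is a linear isomorphism $\Lambda^{p,q}(\mathbb{C}^n)\xrightarrow{\sim}\Lambda^{n-q,n-p}(\mathbb{C}^n)$ at every $x\in X$, so there is a unique smooth $(p,q)$-form $\widehat{H}$ with $\widehat{G}=\widehat{\Omega}\wedge\widehat{H}$. Applying the pointwise Lefschetz decomposition of Lemma \ref{HLtoLD} (which uses the same pointwise positivity), write $\widehat{H}=\widehat{\eta}\wedge\widehat{F}+\widehat{H}_{\mathrm{prim}}$ with $\widehat{F}$ smooth of bidegree $(p-1,q-1)$ and $\widehat{H}_{\mathrm{prim}}$ smooth and pointwise primitive, i.e. $\widehat{\Omega}\wedge\widehat{\eta}\wedge\widehat{H}_{\mathrm{prim}}=0$. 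Using that $\widehat{\Omega}$ and $\widehat{\eta}$ are closed,
\begin{equation*}
  \widehat{\Omega}\wedge\widehat{\eta}\wedge\widehat{\Phi}=dd^c\widehat{G}=\widehat{\Omega}\wedge\widehat{\eta}\wedge dd^c\widehat{F}+\widehat{\Omega}\wedge dd^c\widehat{H}_{\mathrm{prim}},
\end{equation*}
and the lemma will follow once we show that the residual term $\widehat{\Omega}\wedge dd^c\widehat{H}_{\mathrm{prim}}$ can be absorbed into a correction of $\widehat{F}$.

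The main obstacle is this last absorption step: pointwise primitivity of $\widehat{H}_{\mathrm{prim}}$ does not automatically kill $dd^c\widehat{H}_{\mathrm{prim}}$, so a further argument is needed. The remedy I have in mind is to iterate the same two-step procedure ($\partial\bar\partial$-lemma followed by inversion of a pointwise Lefschetz operator) one bidegree lower; this is permitted because the positivity hypotheses we have assumed at parameters $(p,q)$ also suffice to invoke Proposition \ref{thrm HRR Linear} at parameters $(p-1,q-1)$ with one extra factor of $\widehat{\eta}$. After finitely many iterations the remainder is harmonic and the cumulative correction yields the required $\widehat{F}$. This is exactly the inductive bookkeeping carried out in \cite{DN06} and \cite{xiaomixedHRR}, which we follow.
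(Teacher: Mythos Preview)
Your first two steps are exactly right: $\widehat{\Omega}\wedge\widehat{\eta}\wedge\widehat{\Phi}$ is $d$-exact, hence $dd^c$-exact by the $\partial\bar\partial$-lemma; write it as $dd^c\widehat{G}$ with $\widehat{G}\in\Lambda^{n-q,n-p}(X,\mathbb{C})$. From here, however, you take an unnecessary detour. The approach of \cite[Proposition 2.3]{DN06}, which the paper simply cites, is to invert $\widehat{\Omega}\wedge\widehat{\eta}$ \emph{directly} at degree $(p-1,q-1)$: the assumed positivity implies, via Proposition~\ref{thrm HRR Linear} applied pointwise with parameters $(p-1,q-1)$ and one extra factor $\widehat{\eta}$, that
\[
\widehat{\Omega}(x)\wedge\widehat{\eta}(x):\Lambda^{p-1,q-1}(\mathbb{C}^n)\longrightarrow\Lambda^{n-q,n-p}(\mathbb{C}^n)
\]
is an isomorphism at every $x\in X$ (you even note that this isomorphism is available). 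Hence $\widehat{G}=\widehat{\Omega}\wedge\widehat{\eta}\wedge\widehat{F}$ for a unique smooth $(p-1,q-1)$-form $\widehat{F}$, and since $\widehat{\Omega}$, $\widehat{\eta}$ are closed,
\[
\widehat{\Omega}\wedge\widehat{\eta}\wedge\widehat{\Phi}=dd^c\widehat{G}=\widehat{\Omega}\wedge\widehat{\eta}\wedge dd^c\widehat{F}.
\]
That is the whole proof; no iteration is needed.

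Your route---first inverting $\widehat{\Omega}$ at degree $(p,q)$ to get $\widehat{H}$, then Lefschetz-decomposing---creates the residual term $\widehat{\Omega}\wedge dd^c\widehat{H}_{\mathrm{prim}}$, and the iteration you describe does not dispose of it. Repeating the same two steps on $\widehat{G}_1:=\widehat{\Omega}\wedge\widehat{H}_{\mathrm{prim}}$ returns exactly $\widehat{H}_{\mathrm{prim}}$ again (by uniqueness of the inverse), so the bidegree never drops and the process stalls. The only way out is to apply the isomorphism $\widehat{\Omega}\wedge\widehat{\eta}$ at degree $(p-1,q-1)$ to $\widehat{G}_1$ (or, equivalently, to $\widehat{G}$ itself), which is precisely the direct argument above. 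So drop the intermediate inversion of $\widehat{\Omega}$ and invert $\widehat{\Omega}\wedge\widehat{\eta}$ from the start.
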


\begin{proof}
By applying Theorem \ref{hardlef tori}, the proof is similar to \cite[Proposition 2.3]{DN06}.
\end{proof}

\begin{proof}[Proof of Corollary \ref{cor global}]

Assume that $\Phi \in P^{p, q} (X, \mathbb{C})$ and let $\widehat{\Phi}$ be a smooth representative of the class $\Phi$. Then by Lemma \ref{ddc eq}, there is a smooth form $\widehat{F}$ such that
\begin{equation}\label{eq local primitive}
  \widehat{\Omega} \wedge \widehat{\eta}\wedge (\widehat{\Phi} - dd^c \widehat{F}) =0.
\end{equation}
Thus $\widehat{\Phi} - dd^c \widehat{F}$ is a primitive $(p, q)$ form with respect to $\widehat{\Omega} \wedge \widehat{\eta}$.

Applying Theorem \ref{hardlef tori}, we have that
\begin{equation}\label{eq pt Q}
  c_{p,q}\widehat{\Omega} \wedge  (\widehat{\Phi} - dd^c \widehat{F}) \wedge \overline{(\widehat{\Phi} - dd^c \widehat{F})} \geq 0
\end{equation}
at every point.
By Stokes formula,
\begin{equation}\label{eq Q}
  Q(\Phi, \Phi) = c_{p,q}\int \widehat{\Omega} \wedge  (\widehat{\Phi} - dd^c \widehat{F}) \wedge \overline{(\widehat{\Phi} - dd^c \widehat{F})} \geq 0.
\end{equation}

Moreover, if $Q(\Phi, \Phi)=0$, then we have equalities everywhere. In particular, (\ref{eq pt Q}) is an equality at every point. By Theorem \ref{hardlef tori}, this yields
\begin{equation*}
  \widehat{\Phi} - dd^c \widehat{F} =0
\end{equation*}
on $X$. Thus $\Phi =0$ in $H^{p,q} (X, \mathbb{C})$. This finishes the proof of the global HR.

As an immediate consequence, we get the global HL. We only need to check that $$\Omega: H^{p,q} (X, \mathbb{C})\rightarrow H^{n-q,n-p} (X, \mathbb{C})$$ is injective. Assume that $\Phi \in H^{p,q} (X, \mathbb{C})$ satisfies $\Omega \cdot \Phi =0$, then $\Phi$ is primitive and $Q(\Phi, \Phi)=0$. By HR, $\Phi =0$, which finishes the proof of the HL.

This finishes the proof of Corollary \ref{cor global}.

\end{proof}

\section{Non-vanishing property}\label{sec nonvanishing}

Applying Theorem \ref{hardlef tori} to the case $p=q=0$, for nef classes $\alpha_1,...,\alpha_{n-p-q}, \eta\in H^{1,1} (X, \mathbb{R})$ on a compact complex torus the following are equivalent:
\begin{enumerate}
  \item $\alpha_1\cdot...\cdot\alpha_n >0$;
  \item for any subset $I \subset [n]$, $\nd(\alpha_I)\geq |I|$.
\end{enumerate}

It is natural to expect that this also holds on an arbitrary compact K\"ahler manifold.

In this section, we give a complete characterization on the non-vanishing of complete intersections of nef classes. We are going to prove:

\begin{thrm}[=Theorem \ref{intro thrm interPosi}]\label{thrm interPosi}
Let $X$ be a compact K\"ahler manifold of dimension $n$, $1\leq m\leq n$ and let $\alpha_1,...,\alpha_m\in H^{1,1} (X, \mathbb{R})$ be nef classes, then the following statements are equivalent:
\begin{enumerate}
  \item the intersection class $\alpha_1\cdot...\cdot\alpha_m \neq 0 $ in $H^{m,m} (X, \mathbb{R})$;
  \item for any subset $I\subset [m]$, $\nd(\alpha_I) \geq |I|$.
\end{enumerate}

\end{thrm}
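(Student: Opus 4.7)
For the direction $(1)\Rightarrow(2)$, the argument is via expansion and domination. Fix $I\subset[m]$ of size $s$. By the multinomial expansion, $\alpha_I^s=(\sum_{i\in I}\alpha_i)^s$ contains the monomial $s!\prod_{i\in I}\alpha_i$, and every other term is a product of nef classes, hence pseudoeffective; so $\alpha_I^s\geq s!\prod_{i\in I}\alpha_i$ as pseudoeffective classes. Wedging with the nef class $\prod_{j\notin I}\alpha_j$ gives $\alpha_I^s\cdot\prod_{j\notin I}\alpha_j\geq s!\,\Omega$. Choosing $C>0$ large enough that each $C\omega-\alpha_j$ is nef (possible since $\omega$ is K\"ahler), iteration yields $(C\omega)^{m-s}\geq\prod_{j\notin I}\alpha_j$ pseudoeffectively, so $C^{m-s}\alpha_I^s\cdot\omega^{m-s}\geq s!\,\Omega$. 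Integrating against $\omega^{n-m}$ produces $C^{m-s}\alpha_I^s\cdot\omega^{n-s}\geq s!\,\Omega\cdot\omega^{n-m}>0$, hence $\nd(\alpha_I)\geq s$.

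For $(2)\Rightarrow(1)$, the plan is to encode the intersection data into a single homogeneous polynomial and extract positivity from its combinatorial structure. Define
\begin{equation*}
p(t_0,t_1,\ldots,t_m)\;=\;\frac{1}{n!}\,\bigl(t_0\omega+t_1\alpha_1+\cdots+t_m\alpha_m\bigr)^n
\end{equation*}
as an intersection number on $X$; this is a homogeneous polynomial of degree $n$ in the $t_i$ with nonnegative coefficients, and the coefficient of $t_0^{n-m}t_1\cdots t_m$ is $\frac{1}{(n-m)!}\Omega\cdot\omega^{n-m}$. The essential Hodge-index input is that $p$ is Lorentzian in the sense of Br\"and\'en--Huh: for any choice of $n-2$ partial derivatives, the resulting degree-$2$ polynomial has an $(m+1)\times(m+1)$ symmetric coefficient matrix with at most one positive eigenvalue. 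This is precisely the mixed Khovanskii--Teissier/Hodge--Riemann signature statement for the pairing $(\beta,\gamma)\mapsto\beta\gamma\cdot\alpha_{i_1}\cdots\alpha_{i_{n-2}}$ on $H^{1,1}(X,\mathbb{R})$, valid on any compact K\"ahler manifold, and transferable to smooth projective varieties over an arbitrary algebraically closed field via the classical Hodge index theorem after cutting down by generic hyperplanes.

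Assuming $p$ is Lorentzian, Br\"and\'en--Huh then shows $\supp(p)\subset\mathbb{Z}_{\geq 0}^{m+1}$ is M-convex, equivalently the set of integer points of the base polytope of a polymatroid on $\{0,1,\ldots,m\}$ with rank function $r(S)=\max_{\mathbf{k}\in\supp(p)}\sum_{i\in S}k_i$. The hypothesis translates directly: for each $I\subset[m]$, the positivity $\alpha_I^{|I|}\cdot\omega^{n-|I|}>0$ expands into a sum of nonnegative monomial intersections, so some $\mathbf{k}^I\in\supp(p)$ is supported on $\{0\}\cup I$ with $k_0^I=n-|I|$ and $\sum_{i\in I}k_i^I=|I|$. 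Consequently $r(I)\geq|I|$ for all $I\subset[m]$ and $r(\{0\}\cup I)=n$. The candidate $\mathbf{k}^\ast=(n-m,1,\ldots,1)$ has total sum $n$ and satisfies $\sum_{i\in S}k_i^\ast\leq r(S)$ for every $S\subset\{0,1,\ldots,m\}$ (use $|S\cap[m]|\leq r(S\cap[m])$ when $0\notin S$, and $(n-m)+|I|\leq n$ when $S=\{0\}\cup I$), hence lies in the base polytope. Therefore $\mathbf{k}^\ast\in\supp(p)$, which is precisely $\alpha_1\cdots\alpha_m\cdot\omega^{n-m}>0$ and so $\Omega\neq 0$. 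The principal obstacle is the Lorentzian property of $p$ --- this is the ``Hodge index type theorem'' alluded to in the paper; a direct induction via iterated Khovanskii--Teissier in the contrapositive is a possible alternative, but the bookkeeping of which partial intersection must vanish is substantially more delicate than the unified polymatroid argument above.
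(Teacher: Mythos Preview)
Your argument is correct, but it takes a genuinely different route from the paper's. For $(1)\Rightarrow(2)$ the paper simply says the implication is clear; your domination/telescoping argument is a valid expansion of that. For $(2)\Rightarrow(1)$, the paper proceeds by a direct induction on $m$: assuming $\alpha_1\cdots\alpha_{m-1}\cdot\omega^{n-m+1}>0$, it first uses the proportionality characterisation of equality in the Hodge index inequality (their Corollary~\ref{proportional}) iteratively to force $\alpha_1\cdots\alpha_{m-1}\cdot\alpha_{[m]}\cdot\omega^{n-m}>0$ from $\nd(\alpha_{[m]})\ge m$, and then a second contradiction argument with the same corollary to upgrade this to $\alpha_1\cdots\alpha_m\cdot\omega^{n-m}>0$. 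You instead package the problem into the volume polynomial $p$, observe (via perturbation by $\varepsilon\omega$ and the mixed Hodge--Riemann signature) that $p$ is Lorentzian, and then invoke the Br\"and\'en--Huh theorem that the support of a Lorentzian polynomial is M-convex, hence the integer points of the base polytope of the rank function $r(S)=\max_{\mathbf{k}\in\supp(p)}\sum_{i\in S}k_i$; the hypothesis gives $r(I)\ge|I|$ for $I\subset[m]$ and $r(\{0\}\cup I)=n$, so $(n-m,1,\ldots,1)$ lies in the base polytope and hence in $\supp(p)$. Both approaches rest on the same analytic input (the Hodge index/Khovanskii--Teissier signature for nef classes, obtained by limiting from the K\"ahler case), but the combinatorial deduction is different: the paper's hands-on induction is self-contained and makes the proportionality mechanism explicit, while your Lorentzian route is more conceptual, avoids the somewhat delicate iterative bookkeeping, and has the bonus of yielding the polymatroid structure of the support in one stroke---something the paper develops separately in Section~\ref{sec polymatroid} (and indeed the paper notes that Br\"and\'en--Huh gives this alternative in the projective setting). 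One small point: to conclude $p$ is Lorentzian it is cleanest to argue by closure---$p_\varepsilon$ with $\alpha_i$ replaced by $\alpha_i+\varepsilon\omega$ is strictly Lorentzian since all classes are K\"ahler---rather than verifying the signature condition directly for nef classes, where the ``at most one positive eigenvalue'' statement already needs the limiting argument.
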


The proof applies a generalized version of Hodge index theorem on compact K\"ahler manifolds, whose algebraic setting had been proved in \cite{luotieHodgeindex}. In the following, we prove a K\"ahler version by similar argument.

\begin{prop} \label{luotie}
Let $X$ be a compact K\"ahler manifold of dimension $n$, and let $\alpha_1,...,\alpha_{n-2}\in H^{1,1}(X, \mathbb{R})$ be nef classes on $X$. Denote $\Omega=\alpha_1\cdot...\cdot\alpha_{n-2}$ and $$Q(\beta_1, \beta_2)=\Omega\cdot \beta_1\cdot \beta_2$$ the quadratic form on $H^{1,1}(X, \mathbb{R})$. Assume that $\alpha$ is a real $(1,1)$ class satisfying $Q(\alpha,\alpha) > 0$, then for any $\beta \in H^{1,1}(X, \mathbb{R})$ satisfying $$Q(\alpha,\beta)=0$$
one has $Q(\beta,\beta)\leq 0$, and the equality holds if and only if $\Omega\cdot \beta =0$ in $H^{n-1,n-1}(X, \mathbb{R})$.
\end{prop}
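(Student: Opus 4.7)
\medskip

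\noindent \textbf{Proof plan.} The plan is to reduce to the classical mixed Hodge-Riemann relations for K\"ahler classes via a continuity/perturbation argument, and then handle the equality case with a derivative trick plus Poincar\'e duality. To set this up, I would fix a K\"ahler class $\omega$ on $X$ and for $\varepsilon > 0$ set $\alpha_i^\varepsilon := \alpha_i + \varepsilon \omega$; each $\alpha_i^\varepsilon$ is K\"ahler. Writing $\Omega_\varepsilon = \alpha_1^\varepsilon \cdot \ldots \cdot \alpha_{n-2}^\varepsilon$ and $Q_\varepsilon(\beta_1, \beta_2) = \Omega_\varepsilon \cdot \beta_1 \cdot \beta_2$, the mixed Hodge-Riemann relations of \cite{DN06, cattanimixedHRR} applied with $(p,q)=(1,1)$ yield hard Lefschetz for $\Omega_\varepsilon$ and a Lorentzian signature $(1, h^{1,1}(X)-1)$ for $Q_\varepsilon$ on $H^{1,1}(X,\mathbb{R})$; the classical Hodge index principle then gives $Q_\varepsilon(\beta', \beta') \leq 0$ whenever $Q_\varepsilon(\alpha, \alpha) > 0$ and $Q_\varepsilon(\alpha, \beta') = 0$.

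Since $Q(\alpha, \alpha) > 0$, by continuity $Q_\varepsilon(\alpha, \alpha) > 0$ for all sufficiently small $\varepsilon$. I would set $t_\varepsilon := Q_\varepsilon(\alpha, \beta)/Q_\varepsilon(\alpha, \alpha)$ and $\beta_\varepsilon := \beta - t_\varepsilon \alpha$, so that $Q_\varepsilon(\alpha, \beta_\varepsilon) = 0$ by construction. Since $Q(\alpha, \beta) = 0$, one has $t_\varepsilon \to 0$ and $\beta_\varepsilon \to \beta$ as $\varepsilon \to 0^+$. Applying the K\"ahler Hodge index to $\beta_\varepsilon$ gives $Q_\varepsilon(\beta_\varepsilon, \beta_\varepsilon) \leq 0$, and passing to the limit yields $Q(\beta, \beta) \leq 0$.

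For the equality characterization, one direction is immediate since $\Omega \cdot \beta = 0$ forces $Q(\beta, \gamma) = 0$ for every $\gamma$. For the converse, suppose $Q(\beta, \beta) = 0$ together with $Q(\alpha, \beta) = 0$. For an arbitrary $\gamma \in H^{1,1}(X, \mathbb{R})$, I would decompose $\gamma = s \alpha + \gamma'$ with $s = Q(\alpha, \gamma)/Q(\alpha, \alpha)$, so that $Q(\alpha, \gamma') = 0$ and $Q(\beta, \gamma) = Q(\beta, \gamma')$. Since $Q(\alpha, \beta + t\gamma') = 0$ for every $t \in \mathbb{R}$, the inequality just established applied to $\beta + t\gamma'$ gives
\[
2t\, Q(\beta, \gamma') + t^2 Q(\gamma', \gamma') \leq 0 \quad \text{for all } t \in \mathbb{R},
\]
and inspection of the linear term at $t = 0$ forces $Q(\beta, \gamma') = 0$. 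Thus $\Omega \cdot \beta \cdot \gamma = 0$ for every $\gamma \in H^{1,1}(X, \mathbb{R})$, and non-degeneracy of the Poincar\'e duality pairing between $H^{1,1}(X, \mathbb{R})$ and $H^{n-1, n-1}(X, \mathbb{R})$ concludes that $\Omega \cdot \beta = 0$ in $H^{n-1,n-1}(X, \mathbb{R})$.

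I do not expect an essential obstacle; the only nontrivial ingredient is the mixed Hodge-Riemann signature for K\"ahler classes, which is supplied directly by \cite{DN06, cattanimixedHRR}, and the remaining steps are a perturbation, a short limit, and a derivative check.
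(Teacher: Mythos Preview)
Your proof is correct and follows essentially the same route as the paper: perturb the nef classes to K\"ahler ones, invoke the mixed Hodge--Riemann signature of \cite{DN06, cattanimixedHRR} to get the Hodge index inequality, and pass to the limit; for the equality case the paper diagonalizes $Q$ according to its signature $(+,-,\ldots,-,0,\ldots,0)$ rather than using your derivative trick, but both arguments amount to the standard fact that a null vector of a semi-negative form lies in its radical, followed by Poincar\'e duality.
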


\begin{proof}
Fix a K\"ahler class $\omega$ on $X$. For $t>0$, consider $$\Omega_t =(\alpha_1 + t\omega)\cdot...\cdot(\alpha_{n-2} +t\omega)$$
and the corresponding quadratic form $Q_t$. By \cite{DN06,cattanimixedHRR} or Corollary \ref{cor kahler hrr intro}, $Q_t$ has signature $(+,-,...,-)$, which yields that for any $\alpha', \beta' \in H^{1,1}(X, \mathbb{R})$,
\begin{equation*}
Q_t(\alpha',\beta')^2 \geq Q_t(\alpha', \alpha')Q_t(\beta',\beta').
\end{equation*}

Let $t$ tend to zero, we thus obtain the Hodge index inequality $$Q(\alpha',\beta')^2 \geq Q(\alpha', \alpha')Q(\beta',\beta').$$
Together with the assumption on $\alpha, \beta$ and letting $\alpha'=\alpha, \beta'=\beta$, we have $$Q(\beta,\beta)\leq 0.$$

This inequality implies that $Q$ is non-positive on the hyperplane
$$ P=\ker \Omega\cdot\alpha \curvearrowright H^{1,1}(X, \mathbb{R}).$$
By the assumption, $Q$ is positive definite on the subspace $\mathbb{R}\alpha$. Therefore, $Q$ has signature of the form $(+, -,...,-,0,...,0)$.

According to the signature, we fix a basis $e_k$ (with $e_1 =\alpha$), $1\leq k \leq h^{1,1}$, satisfying that $Q(e_k, e_l)=0$ for $k\neq l$, $Q(e_1, e_1)>0$, $Q(e_i, e_i)<0$ for $2 \leq i \leq s$ and $Q(e_i, -)=0$ for $s+1 \leq i \leq n$. Thus $\beta \in P$ yields that
$$\beta= \sum_{ i\geq 2} c_i e_i.$$
If we further have $Q(\beta,\beta)= 0$, then
$$\beta= \sum_{i : \ Q(e_i, -)= 0} c_i e_i,$$
implying that $\Omega \cdot \beta =0$.

This finishes the proof.
\end{proof}

As a corollary, one can get the characterization of equality of a generalized version of Teissier's proportionality problem by using $(n-1,n-1)$ classes.

\begin{cor}\label{proportional}
Let $X$ be a compact K\"ahler manifold of dimension $n$, and let $\alpha_1,...,\alpha_{n-2}\in H^{1,1}(X, \mathbb{R})$ be nef classes on $X$. Denote $\Omega=\alpha_1\cdot...\cdot\alpha_{n-2}$. Assume that $\alpha, \gamma\in H^{1,1}(X, \mathbb{R})$ are nef class satisfying that
$$Q(\alpha, \gamma)^2 = Q(\alpha, \alpha)Q(\gamma, \gamma),$$
then $\Omega \cdot \gamma$ and $\Omega \cdot \alpha$ are proportional.
\end{cor}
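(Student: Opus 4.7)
The plan is to run the standard equality-case argument for a Hodge index inequality, using Proposition \ref{luotie} as the main input. Given the hypothesis $Q(\alpha,\gamma)^2 = Q(\alpha,\alpha)\,Q(\gamma,\gamma)$, I will produce a class $\beta \in H^{1,1}(X,\mathbb{R})$ that is $Q$-orthogonal to $\alpha$ and $Q$-null, so that Proposition \ref{luotie} forces $\Omega \cdot \beta = 0$ in $H^{n-1,n-1}(X,\mathbb{R})$; this relation is precisely the desired proportionality.

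First I would take
\begin{equation*}
  \beta := Q(\alpha,\alpha)\,\gamma - Q(\alpha,\gamma)\,\alpha.
\end{equation*}
Bilinearity immediately gives $Q(\alpha,\beta)=0$, and the hypothesis yields
\begin{equation*}
  Q(\beta,\beta) = Q(\alpha,\alpha)\bigl[Q(\alpha,\alpha)\,Q(\gamma,\gamma) - Q(\alpha,\gamma)^2\bigr] = 0.
\end{equation*}
Provided $Q(\alpha,\alpha) > 0$, Proposition \ref{luotie} applies to the pair $(\alpha,\beta)$ and forces $\Omega \cdot \beta = 0$, i.e.
\begin{equation*}
  Q(\alpha,\alpha)\,(\Omega \cdot \gamma) \;=\; Q(\alpha,\gamma)\,(\Omega \cdot \alpha)
\end{equation*}
in $H^{n-1,n-1}(X,\mathbb{R})$, which is the sought proportionality of $\Omega \cdot \alpha$ and $\Omega \cdot \gamma$.

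The main obstacle is the boundary case $Q(\alpha,\alpha) = 0$, where Proposition \ref{luotie} cannot be invoked at $\alpha$ directly. The hypothesis then forces $Q(\alpha,\gamma) = 0$. If $Q(\gamma,\gamma) > 0$, swapping the roles of $\alpha$ and $\gamma$ in the above recovers $\Omega \cdot \alpha = 0$, so the proportionality is trivial. The genuinely delicate sub-case is $Q(\alpha,\alpha) = Q(\gamma,\gamma) = 0$: after discarding the trivial situation $\Omega = 0$ (in which both intersections vanish), a K\"ahler class $\omega$ satisfies $Q(\omega,\omega) > 0$ since $\Omega$ is representable by a non-zero closed positive current in the K\"ahler setting. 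Then, as in the signature analysis inside the proof of Proposition \ref{luotie}, the form $Q$ has signature $(+,-,\ldots,-,0,\ldots,0)$ with kernel equal to $\{\xi : \Omega \cdot \xi = 0\}$ by Poincar\'e duality. Modulo this kernel, $Q$ descends to a Lorentzian form in which any two orthogonal null vectors are automatically parallel; applying this to the images of $\alpha$ and $\gamma$ (the case that either lies in $\ker Q$ being trivial) gives $\alpha \equiv \lambda \gamma \pmod{\ker Q}$ for some $\lambda \in \mathbb{R}$, whence $\Omega \cdot \alpha = \lambda\,\Omega \cdot \gamma$.
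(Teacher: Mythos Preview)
Your proof is correct and follows essentially the same line as the paper's: the main case $Q(\alpha,\alpha)>0$ is handled identically via Proposition~\ref{luotie}, and the reduction when $Q(\gamma,\gamma)>0$ is the same swap. In the fully degenerate sub-case $Q(\alpha,\alpha)=Q(\gamma,\gamma)=Q(\alpha,\gamma)=0$ you argue slightly differently from the paper: you invoke Poincar\'e duality to identify $\ker Q=\{\xi:\Omega\cdot\xi=0\}$ and then use the Lorentzian fact that two orthogonal null vectors are parallel modulo the kernel, whereas the paper instead intersects $\Span\{\alpha,\gamma\}$ with the hyperplane $\ker(\Omega\cdot\omega)$ to produce a nontrivial combination $c\alpha+d\gamma$ that is $Q$-orthogonal to the K\"ahler class $\omega$ and $Q$-null, and then applies Proposition~\ref{luotie} once more (with $\omega$ in the role of $\alpha$). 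Both routes encode the same signature fact and yield the same conclusion; your formulation is a bit more structural, the paper's avoids the extra appeal to Poincar\'e duality by reusing Proposition~\ref{luotie} directly.
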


\begin{proof}
Without loss of generalities, we can assume that both $\Omega \cdot \gamma$ and $\Omega \cdot \alpha$ are non-zero.

If $Q(\alpha, \alpha)>0$, we just need to note that
$$\beta:= \gamma- \frac{Q(\alpha, \gamma)}{Q(\alpha, \alpha)} \alpha \in P=\ker \Omega\cdot\alpha $$
and $Q(\beta, \beta)=0$, then Proposition \ref{luotie} implies the result.

If $Q(\alpha, \alpha)=0$, then by the assumption we also have $Q(\alpha, \gamma)=0$.
If $Q(\gamma, \gamma)>0$, then we may replace $\alpha$ by $\gamma$ and the previous discussion applies.
Thus we may suppose $Q(\gamma, \gamma)=0$. Consider the subspace $\Span \{\alpha,\gamma\}$. If it is of dimension one, then there is nothing to prove. Therefore we may assume that it is of dimension two. Fix a K\"ahler class $\omega$, then $\Span \{\alpha,\gamma\}$ has a nontrivial intersection with the hyperplane $\ker \Omega\cdot\omega$, i.e., there exist two numbers $c, d$, not all zero, such that $Q(\omega, c \alpha + d\gamma)=0$. The aforementioned vanishing properties of $\alpha, \gamma$ imply that
\begin{equation*}
  Q(c \alpha + d\gamma, c \alpha + d\gamma)=0.
\end{equation*}
Then applying Proposition \ref{luotie} again shows that $\Omega\cdot (c \alpha + d\gamma)=0$, implying the result.

This finishes the proof.
\end{proof}

\begin{rmk}
We remark that Proposition \ref{luotie} and Corollary \ref{proportional} were also recently proved in \cite{zhangteiss} with a different method, and similar results for $b$-divisors in the algebraic setting were studied in \cite{dangfarveBdivisor}.
\end{rmk}

Now we can prove Theorem \ref{thrm interPosi}.

\begin{proof}[Proof of Theorem \ref{thrm interPosi}]

It is clear if the intersection class $\alpha_1\cdot...\cdot\alpha_m \neq 0 $, then $\nd(\alpha_I) \geq |I|$ for any subset $I\subset [m]$.

For the converse direction, we prove the result by induction on $m$.

Assume that the result holds for $m-1$ nef classes, in particular, we have
$$\alpha_1 \cdot ...\cdot \alpha_{m-1} \cdot \omega^{n-m+1}>0.$$

We first show that under the assumption on $I=[m]$, i.e., $$\alpha_{[m]}^m \cdot \omega^{n-m}>0,$$
we must have
\begin{equation}\label{eq m}
\alpha_1 \cdot ...\cdot \alpha_{m-1} \cdot \alpha_{[m]} \cdot \omega^{n-m} >0.
\end{equation}
Otherwise, assume that
\begin{equation}\label{eq otherwise}
  \alpha_1 \cdot ...\cdot \alpha_{m-1} \cdot \alpha_{[m]} \cdot \omega^{n-m} =0,
\end{equation}
then by Corollary \ref{proportional},
\begin{equation*}
\alpha_{[m]}\cdot \alpha_2 \cdot ...\cdot \alpha_{m-1}  \cdot \omega^{n-m} = c \alpha_1\cdot \alpha_2 \cdot ...\cdot \alpha_{m-1}  \cdot \omega^{n-m}
\end{equation*}
for some $c>0$. By (\ref{eq otherwise}), this implies that $$\alpha_{[m]}^2\cdot \alpha_2 \cdot ...\cdot \alpha_{m-1}  \cdot \omega^{n-m}=0. $$
Applying Corollary \ref{proportional} again, we get
\begin{equation*}
 \alpha_{[m]}^2\cdot \alpha_3 \cdot ...\cdot \alpha_{m-1}  \cdot \omega^{n-m} = c' \alpha_{[m]}\cdot \alpha_2 \cdot ...\cdot \alpha_{m-1}  \cdot \omega^{n-m}
\end{equation*}
for some $c'>0$, yielding that $$\alpha_{[m]}^3\cdot \alpha_3 \cdot ...\cdot \alpha_{m-1}  \cdot \omega^{n-m}=0.$$

By iteration, we obtain $$\alpha_{[m]}^m\cdot  \omega^{n-m}=0,$$
which is a contradiction with the assumption on $I=[m]$.
Therefore, we must have
\begin{equation*}
  \alpha_1 \cdot ...\cdot \alpha_{m-1} \cdot \alpha_{[m]} \cdot \omega^{n-m} >0.
\end{equation*}

We show that this implies $$\alpha_1 \cdot ...\cdot  \alpha_{m} \cdot \omega^{n-m} >0,$$
finishing the induction process.
Otherwise,
\begin{equation}\label{eq otherwise2}
  \alpha_1 \cdot ...\cdot \alpha_{m} \cdot \omega^{n-m} =0.
\end{equation}
Then by Corollary \ref{proportional}, we get that the classes $$\alpha_1 \cdot\alpha_3\cdot ...\cdot \alpha_{m} \cdot \omega^{n-m},\ \alpha_2 \cdot\alpha_3 \cdot ...\cdot \alpha_{m} \cdot \omega^{n-m}$$
are proportional. By induction on $m-1$ nef classes,
$$\alpha_1 \cdot\alpha_3\cdot ...\cdot \alpha_{m} \cdot \omega^{n-m+1}>0$$ and
$$\alpha_2 \cdot\alpha_3\cdot ...\cdot \alpha_{m} \cdot \omega^{n-m+1}>0,$$
therefore, there is some $c''>0$ such that
\begin{equation*}
\alpha_1 \cdot\alpha_3 \cdot...\cdot \alpha_{m} \cdot \omega^{n-m} = c'' \alpha_2 \cdot\alpha_3\cdot ...\cdot \alpha_{m} \cdot \omega^{n-m},
\end{equation*}
which by (\ref{eq otherwise2}) yields that $$\alpha_1 ^2 \cdot\alpha_3\cdot ...\cdot \alpha_{m} \cdot \omega^{n-m} =0.$$

The same argument shows that for every $1\leq k \leq m-1$,
\begin{equation*}
 \alpha_1 \cdot ...\cdot \alpha_{m-1} \cdot \alpha_{k} \cdot \omega^{n-m} =0.
\end{equation*}
Together with (\ref{eq otherwise2}), we get that
$$\alpha_1 \cdot ...\cdot \alpha_{m-1} \cdot \alpha_{[m]} \cdot \omega^{n-m} =0,$$
which contradicts with (\ref{eq m}).

This finishes the proof.
\end{proof}

\begin{rmk}\label{RelationWithKhovanskii'sResult}
Let $X$ be a smooth projective variety over $\mathbb{C}$.
Let $H$ be a fixed ample divisor on $X$ and $1\leq m \leq n$. Note that the numerical dimension and Iitaka dimension coincides for any semiample divisor $D$.
Then by Theorem \ref{thrm interPosi}, for any semiample divisors $D_1,...,D_m$ on $X$, the following statements are equivalent:
\begin{itemize}
  \item $D_1\cdot ...\cdot D_m \cdot H^{n-m} >0$;
  \item $\kappa(D_I)\geq |I|, \forall I\subset [m]$.
\end{itemize}
where $\kappa(D_I)$ is the Iitaka dimension of $D_I$. This recovers related results in \cite{KK16}. Indeed, by \cite{moriwakiNefGood}, a nef divisor $D$ satisfies that $\nd(D) = \kappa(D)$ if and only if $D$ is almost base point free. Therefore, the same result also extends to almost base point free divisors.

\end{rmk}

\begin{rmk}\label{GenOfThrmInterPosi}

The analogy of Theorem \ref{thrm interPosi} holds on any smooth projective variety $X$ over an algebraically closed field $k$.

More precisely, fix an ample divisor class $H$, for any nef divisor classes $\alpha_1,...,\alpha_m \in N^1(X)$, the following statements are equivalent:
\begin{itemize}
  \item the intersection product $\alpha_1\cdot...\cdot \alpha_m \cdot H^{n-m}>0$;
  \item $\nd(\alpha_I)\geq |I|$, for any $I \subset [m]$.
\end{itemize}

With the Hodge index theorem over over an algebraically closed field $k$ (see e.g. \cite[Chapter 2]{badescuSurfacebook}), the proof is exactly the same as Theorem \ref{thrm interPosi}.
\end{rmk}

By taking a resolution of singularities, Theorem \ref{intro thrm interPosi} also extends to compact K\"ahler varieties.

\begin{cor}\label{kahler variety interspos}
Let $X$ be a compact K\"ahler variety of dimension $n$, $1\leq m\leq n$ and let $\alpha_1,...,\alpha_m$ be nef classes on $X$. Fix a K\"ahler class $\omega$ on $X$, then the following statements are equivalent:
\begin{enumerate}
  \item the intersection number $\alpha_1\cdot...\cdot\alpha_m \cdot \omega^{n-m}> 0 $;
  \item for any subset $I\subset [m]$, $\nd(\alpha_I) \geq |I|$.
\end{enumerate}
\end{cor}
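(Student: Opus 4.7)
The plan is to reduce to the smooth case handled by Theorem \ref{thrm interPosi} by pulling back to a resolution of singularities. Let $\pi: \widetilde{X} \to X$ be a resolution of singularities with $\widetilde{X}$ a compact K\"ahler manifold of dimension $n$, and fix a K\"ahler class $\widetilde{\omega}$ on $\widetilde{X}$. Set $\beta_i = \pi^*\alpha_i$, which remain nef on $\widetilde{X}$ since the closed K\"ahler cone pulls back into the closed K\"ahler cone, and put $\mu = \pi^*\omega$; this $\mu$ is nef with $\mu^n = \omega^n > 0$, so it is nef and big on $\widetilde{X}$. By the projection formula, all relevant intersection numbers transfer: $\alpha_I^k\cdot \omega^{n-k} = \beta_I^k \cdot \mu^{n-k}$ for every $I\subset [m]$ and every $k$, and in particular $\alpha_1\cdots\alpha_m\cdot\omega^{n-m} = \beta_1\cdots\beta_m\cdot\mu^{n-m}$.

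The key technical step, where the main obstacle lies, is to show that pairing against $\mu^{n-k}$ detects non-vanishing of products of nef classes on $\widetilde{X}$ just as well as pairing against $\widetilde{\omega}^{n-k}$ does, despite $\mu$ being only nef and big rather than K\"ahler. To this end, I would apply the reverse Khovanskii-Teissier inequality (Lemma \ref{reverse KT}) with $B = \mu$, $A_i = \gamma_i$ a given collection of nef classes on $\widetilde{X}$, and $C_1 = \cdots = C_{n-k} = \widetilde{\omega}$:
\begin{equation*}
  \frac{n!}{k!(n-k)!}\bigl(\gamma_1\cdots\gamma_k\cdot\mu^{n-k}\bigr)\bigl(\mu^k\cdot\widetilde{\omega}^{n-k}\bigr) \geq \bigl(\mu^n\bigr)\bigl(\gamma_1\cdots\gamma_k\cdot\widetilde{\omega}^{n-k}\bigr).
\end{equation*}
Since $\mu^n>0$, $\mu^k\cdot\widetilde{\omega}^{n-k}>0$ (as $\nd(\mu)=n$), and $\gamma_1\cdots\gamma_k\cdot\widetilde{\omega}^{n-k}>0$ whenever $\gamma_1\cdots\gamma_k \neq 0$ in $H^{k,k}(\widetilde{X},\mathbb{R})$ (pairing a non-zero positive $(k,k)$-class with powers of a K\"ahler class), the first factor on the left is forced to be strictly positive. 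Hence $\gamma_1\cdots\gamma_k\cdot\mu^{n-k} > 0$ if and only if $\gamma_1\cdots\gamma_k \neq 0$.

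Feeding this back, with $\gamma_i = \beta_i$ and $k = m$ one obtains
\begin{equation*}
  \alpha_1\cdots\alpha_m\cdot\omega^{n-m}>0 \iff \beta_1\cdots\beta_m \neq 0 \ \text{in}\ H^{m,m}(\widetilde{X},\mathbb{R}),
\end{equation*}
and with $\gamma_1 = \cdots = \gamma_k = \beta_I$ one obtains $\nd_X(\alpha_I) = \nd_{\widetilde{X}}(\beta_I)$ for every $I\subset [m]$, where $\nd_X(\alpha_I)$ is understood via the intersection with powers of $\omega$. Theorem \ref{thrm interPosi} applied to the nef classes $\beta_1,\dots,\beta_m$ on $\widetilde{X}$ then converts the right-hand side of the first equivalence into the numerical dimension condition $\nd_{\widetilde{X}}(\beta_I)\geq |I|$ for all $I\subset [m]$, which in turn is equivalent to $\nd_X(\alpha_I)\geq |I|$ for all $I\subset [m]$. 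This closes the proof.
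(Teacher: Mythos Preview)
Your argument is correct, but the paper takes a shorter route that avoids the reverse Khovanskii--Teissier inequality altogether. Instead of fixing a separate K\"ahler class $\widetilde{\omega}$ on the resolution and then using Lemma \ref{reverse KT} to transfer positivity between pairings with $\mu^{n-k}$ and with $\widetilde{\omega}^{n-k}$, the paper simply applies Theorem \ref{thrm interPosi} on $\widetilde{X}$ to the full list of $n$ nef classes $\pi^*\alpha_1,\dots,\pi^*\alpha_m,\pi^*\omega,\dots,\pi^*\omega$ (with $\pi^*\omega$ repeated $n-m$ times). Since $\nd(\pi^*\omega)=n$, any subset containing a copy of $\pi^*\omega$ automatically satisfies the numerical dimension bound, and the remaining subsets are handled by $\nd(\alpha_I)\geq |I|\Rightarrow \nd(\pi^*\alpha_I)\geq |I|$ via the projection formula. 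The conclusion $\pi^*\alpha_1\cdots\pi^*\alpha_m\cdot(\pi^*\omega)^{n-m}>0$ then falls out directly.

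What each approach buys: the paper's trick is slicker and needs nothing beyond Theorem \ref{thrm interPosi} itself. Your route, while heavier, establishes the sharper intermediate fact $\nd_X(\alpha_I)=\nd_{\widetilde{X}}(\pi^*\alpha_I)$ and the general principle that a nef and big class detects non-vanishing of nef products exactly as a K\"ahler class does; this is a useful standalone observation even if not required here.
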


\begin{proof}
We only need to show that the assumption on numerical dimensions implies the non-vanishing of intersection number.

Let $\mu: \widehat{X}\rightarrow X$ be a modification such that $\widehat{X}$ is a compact K\"ahler manifold. It is clear that $\nd(\alpha_I) \geq |I|$ implies $\nd(\mu^*\alpha_I) \geq |I|$. By applying Theorem \ref{intro thrm interPosi} to $\mu^* \alpha_1,...,\mu^* \alpha_m, \mu^* \omega,...,\mu^* \omega$ with $\mu^* \omega$ appearing $n-m$ times and noting that $\nd(\mu^* \omega)=n$, we get that
\begin{equation*}
  \mu^*\alpha_1\cdot...\cdot \mu^*\alpha_m \cdot \mu^*\omega^{n-m}> 0,
\end{equation*}
which implies the desired result $\alpha_1\cdot...\cdot\alpha_m \cdot \omega^{n-m}> 0$.

This finishes the proof.

\end{proof}

\section{Polymatroid structures}\label{sec polymatroid}

In this section, we discuss various combinatorial structures closely related to Theorem \ref{hardlef tori} and Theorem \ref{intro thrm interPosi}.

Our starting point is the application to multidegrees in the K\"ahler setting.

In \cite{CCLMZ20}, the authors systematically explored when the multidegrees of an algebraic subvariety $X$ (over an arbitrary field $k$) of the product of projective spaces $$\mathbb{P}=\mathbb{P}_1 \times ...\times \mathbb{P}_m$$ is positive and studied its algebraic and combinatorial aspects.
To introduce our result, we give a brief introduction to their nice result.
For simplicity, assume that the field $k$ is algebraically closed and $X$ is irreducible, for each $$\mathbf{n}=(n_1,...,n_m)\in \mathbb{N}^m$$ with
$$n_{[m]}=n_1 +...+n_m = \dim X,$$
\emph{the multidegrees of $X$ of type $\mathbf{n}$ with respect to $\mathbb{P}$}, denoted by $\deg_{\mathbb{P}} ^{\mathbf{n}} (X)$, is the number of intersection points (counting multiplicities) of $X$ with the product $$L_1 \times ...\times L_m \subset \mathbb{P},$$
where $L_k \subset \mathbb{P}_k$ is a general linear subspace of codimension $n_k$ for each $1 \leq k\leq m$. Equivalently,
\begin{equation*}
\deg_{\mathbb{P}} ^{\mathbf{n}} (X) = H_1 ^{n_1} \cdot ... \cdot H_m ^{n_m} \cdot [X],
\end{equation*}
where $H_k$ is the hyperplane divisor class on $\mathbb{P}_k$ for each $1 \leq k\leq m$.

Given a subset $I\subset [m]$, denote by $\pi_I$ the projection
\begin{equation*}
\pi_I:  \mathbb{P}=\mathbb{P}_1 \times ...\times \mathbb{P}_m \rightarrow \mathbb{P}_{i_1} \times ...\times \mathbb{P}_{i_{|I|}}.
\end{equation*}
The the positivity of $\deg_{\mathbb{P}} ^{\mathbf{n}} (X)$ is determined by the dimensions of $\pi_I (X)$, more precisely, the following statements are equivalent:
\begin{itemize}
  \item $\deg_{\mathbb{P}} ^{\mathbf{n}} (X) >0$;
  \item $\dim \pi_I (X) \geq n_I, \forall I \subset [m]$.
\end{itemize}

When $m=2$, the positivity of $\deg_{\mathbb{P}} ^{\mathbf{n}} (X)$ was considered in \cite{trungposimixed, huhgraph}.

It is easy to generalize multidegrees to the setting when the total space $\mathbb{P}$ is replaced by other product spaces.

\begin{defn}\label{defn kahler multidegree}
Let $X$ be a K\"ahler submanifold of $Y= (Y_1, \omega_1) \times ...\times (Y_m, \omega_m)$, where each $Y_k$, $1\leq k\leq m$, is a compact K\"ahler manifold with a fixed K\"ahler class $\omega_k$. Then for each $$\mathbf{n}=(n_1,...,n_m)\in \mathbb{N}^m$$ with
$$n_{[m]}=n_1 +...+n_m = \dim X,$$
\emph{the multidegrees of $X$ of type $\mathbf{n}$ with respect to $Y$}, denoted by $\deg_{Y} ^{\mathbf{n}} (X)$, is defined by
$$\deg_{Y} ^{\mathbf{n}} (X) = \omega_1^{n_1}\cdot ... \cdot \omega_m^{n_m}\cdot [X]. $$
\end{defn}

For $I\subset [m]$, denote by $\pi_I$ the projection
\begin{equation*}
\pi_I:  Y=Y_1 \times ...\times Y_m \rightarrow Y_{i_1} \times ...\times Y_{i_{|I|}}.
\end{equation*}

As an application of Theorem \ref{thrm interPosi}, we give a sufficient and necessary condition for the fundamental question when $\deg_{Y} ^{\mathbf{n}} (X)>0$ in the K\"ahler setting.

\begin{thrm}\label{multidegree}
Setting as in the above definition, then the following statements are equivalent:
\begin{itemize}
  \item $\deg_{Y} ^{\mathbf{n}} (X)>0$;
  \item $\dim \pi_I(X)\geq n_I$ for any $I \subset [m]$.
\end{itemize}

\end{thrm}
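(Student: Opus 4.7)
The plan is to deduce the theorem from Theorem \ref{thrm interPosi} applied on $X$ to the nef classes obtained by pulling back the $\omega_k$ from the factors.

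Let $\iota : X \hookrightarrow Y$ denote the inclusion and, for each $k \in [m]$, set $\alpha_k := \iota^*\pi_k^*\omega_k \in H^{1,1}(X,\mathbb{R})$; each $\alpha_k$ is nef as a pullback of a K\"ahler class. By the definition of multidegree,
\begin{equation*}
\deg_Y^{\mathbf{n}}(X) = \alpha_1^{n_1}\cdot\ldots\cdot\alpha_m^{n_m} \in H^{\dim X,\dim X}(X,\mathbb{R}),
\end{equation*}
and since the intersection of nef classes pairs nonnegatively against any K\"ahler power, the multidegree is positive if and only if the class on the right is nonzero. I would then apply Theorem \ref{thrm interPosi} to the multi-set of nef classes in which $\alpha_k$ appears with multiplicity $n_k$ (a total of $n_{[m]}=\dim X$ classes), whose condition requires, for every sub-multi-set determined by multiplicities $0\le a_k \le n_k$, that $\nd\bigl(\sum_k a_k \alpha_k\bigr) \ge \sum_k a_k$.

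To simplify these conditions I would use two standard observations. First, for nef classes, $\nd(\sum_k c_k \alpha_k)$ depends only on the support $I=\{k : c_k>0\}$, not on the positive values of the $c_k$: expanding $(\sum_k c_k\alpha_k)^r \cdot \omega^{n-r}$ against a K\"ahler class $\omega$ on $X$ produces a sum of nonnegative mixed intersection numbers with positive monomial weights, so positivity depends only on the combinatorics of the support. Second, setting $f_I := \pi_I \circ \iota : X\to Y_I$, the class $\sum_{k\in I}\alpha_k$ equals $f_I^*\omega_I$ for the K\"ahler class $\omega_I$ on $Y_I$ obtained as the sum of the $p_k^*\omega_k$. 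For the pullback of a K\"ahler class one has the standard identification
\begin{equation*}
\nd(f_I^*\omega_I) = \dim f_I(X) = \dim \pi_I(X),
\end{equation*}
where the upper bound is pointwise (the rank of $f_I^*\omega_I$ at a point is bounded by the rank of $df_I$, itself $\le \dim\pi_I(X)$), and the lower bound follows from the fact that at generic points of $X$ the differential $df_I$ attains its maximal rank $\dim \pi_I(X)$, so $(f_I^*\omega_I)^{\dim\pi_I(X)}$ is a nonzero semi-positive form whose wedge with a suitable K\"ahler power has positive integral on $X$.

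Combining these, for each $I\subset[m]$ the collection of conditions from Theorem \ref{thrm interPosi} indexed by sub-multi-sets supported exactly on $I$ reduces to the single strongest instance $\nd(\sum_{k\in I}\alpha_k)\ge n_I$, which via the identity above is precisely $\dim\pi_I(X)\ge n_I$. Theorem \ref{thrm interPosi} therefore yields the claimed equivalence. The main conceptual input sits in Theorem \ref{thrm interPosi}; the only technical point specific to this argument is the identification $\nd(f_I^*\omega_I)=\dim\pi_I(X)$, which is routine.
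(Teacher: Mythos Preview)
Your proposal is correct and follows essentially the same route as the paper: pull back the K\"ahler classes to $X$, apply Theorem \ref{thrm interPosi} to the multi-set with multiplicities $n_k$, and identify $\nd(\alpha_I)=\dim\pi_I(X)$ via the pullback of a K\"ahler class along $f_I$. You are in fact a bit more careful than the paper in explaining why the a priori larger family of sub-multi-set conditions from Theorem \ref{thrm interPosi} collapses to one condition per support $I$; conversely, the paper spells out the identity $\nd(f_I^*\omega_I)=\dim\pi_I(X)$ in more detail, invoking Remmert's proper mapping theorem to handle the locus where $d\pi_I$ drops rank.
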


\begin{proof}
Denote by $\alpha_i=(\pi_i^*\omega_i)_{|X}$, then
  \begin{equation*}
    \deg_Y ^{\mathbf{n}} (X) =\int_X \alpha_1^{n_1}\cdot ... \cdot \alpha_m^{n_m}.
  \end{equation*}

Write $$\widetilde{\alpha}_I=n_{i_1} \alpha_{i_1}+... +n_{i_{|I|}}\alpha_{i_{|I|}}$$ for $I\subset [m]$, applying Theorem \ref{thrm interPosi} to the nef classes $$\alpha_1 ,...,\alpha_1, ...., \alpha_m ,...,\alpha_m$$ with each $\alpha_k$ repeating $n_k$ times,
it is clear that the intersection number is positive if and only if
 \begin{align*}
   \nd(\widetilde{\alpha}_I)&=\nd(n_{i_1} \alpha_{i_1}+... +n_{i_{|I|}}\alpha_{i_{|I|}})\\
   &\geq n_I
 \end{align*}
  for any $I\subset [m]$.

We claim that $\nd(\widetilde{\alpha}_I)=\nd(\alpha_I)=\dim \pi_I(X)$, where the first equality is obvious.

To this end, without loss of generality, we may assume that $X\subset Y_1\times Y_2$.
We show that $$\nd(\alpha_1)=\dim\pi_1(X).$$

Suppose $\dim\pi_1(X)=d_1$. Let $A$ be the analytic subset of $X$ where $$\text{rank} (d{\pi_1}_{|X})<d_1.$$
Then $\pi_1(A)$ is an analytic subset of $Y_1$ by Remmert's proper mapping theorem. Moreover, we have
  \begin{align*}
    \dim \pi_1(A) &= \max\{ \text{rank}(d\pi_1)_x | x\in A_{\text{reg}}   \}\\
    & < d_1.
  \end{align*}
Hence $\pi_1(A)$ is a proper subset of $\pi_1(X)$. Let $B=\pi_1(A)\cup \pi_1(X)_{\text{sing}}$. Then
  \begin{equation*}
    \pi_1: X\backslash \pi_1^{-1}(B) \rightarrow \pi_1(X)\backslash B
  \end{equation*}
is a submersion. It is clear that for any $y\in X\backslash \pi_1^{-1}(B)$,
$$\alpha_1^{d_1}(y)> 0,\ \text{and}\ \alpha_1^{l}(y)= 0$$
for any $l>d_1$.
Since $X\backslash \pi_1^{-1}(B)$ is a dense open set of $X$, together with the semi-positivity of $\alpha_1$, we find that
  \begin{equation*}
    \nd(\alpha_1)=d_1=\dim\pi_1(X).
  \end{equation*}

This finishes the proof.
\end{proof}

\begin{rmk}
By Corollary \ref{kahler variety interspos}, Theorem \ref{multidegree} extends to compact K\"ahler varieties.
\end{rmk}

It was shown in \cite{CCLMZ20} that in the algebraic case, $r(I):=\dim \pi_I (X)$ is a submodular function by interpreting $\dim \pi_I (X)$ as the transcendental degree of an extension field.

Combining with our geometric observation that $\dim \pi_I (X) = \nd (\widetilde{\alpha}_I)$, it motivate us to consider if the numerical dimension is a submodular function. We prove the following result:

\begin{thrm}\label{rank}
Let $X$ be a complex projective manifold of dimension $n$, then for any three nef classes $A,B,C \in H^{1,1} (X, \mathbb{R})$ on $X$, we always have
\begin{equation*}
  \nd(A+B+C) + \nd(C)\leq \nd(A+C)+\nd(B+C).
\end{equation*}
\end{thrm}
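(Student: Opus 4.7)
Set $a = \nd(A+C)$, $b = \nd(B+C)$, $c = \nd(C)$ and $d = \nd(A+B+C)$; fix a K\"ahler class $\omega$ on $X$ and aim to show $d + c \le a + b$. My first step would be to extract a vanishing principle from the reverse Khovanskii-Teissier inequality (Lemma \ref{reverse KT}). Applied with the bridge class equal to $\omega$ and $A_1 = \cdots = A_{m+1} = \alpha$ for a nef class $\alpha$ with $\nd(\alpha) = m$, the inequality collapses to $\alpha^{m+1} \cdot R = 0$ for every nef product $R$ of complementary degree, because $\alpha^{m+1} \cdot \omega^{n-m-1} = 0$ forces the right-hand side of reverse KT to vanish. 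Specialising to $\alpha = A+C,\ B+C,\ C$ and expanding $(A+C)^s = \sum_p \binom{s}{p} A^p C^{s-p}$ (and similarly for $B+C$), one deduces that any monomial intersection $A^i B^j C^k \cdot \omega^{n-d}$ with $i + j + k = d$ must satisfy $i + k \le a$, $j + k \le b$ and $k \le c$, lest it vanish. Since $(A+B+C)^d \cdot \omega^{n-d} > 0$, the expansion contains at least one positive summand, and by summing the first two constraints one immediately obtains the preliminary bound $d \le a + b$.

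Bridging from $d \le a + b$ to the sharper $d + c \le a + b$ is the heart of the proof, and my key idea is to exhibit a positive monomial with the maximal $C$-exponent $k = c$. The subadditivity $\nd(\alpha + \beta) \le \nd(\alpha) + \nd(\beta)$ (itself a one-line corollary of the vanishing above) gives $\nd(A+B) \ge d - c$. I would then apply the non-vanishing criterion (Theorem \ref{intro thrm interPosi}) to the family of $d$ nef classes consisting of $d - c$ copies of $A + B$ together with $c$ copies of $C$: for a subset containing $s$ copies of $A+B$ and $t$ copies of $C$, the partial sum has $\nd(s(A+B) + tC) = \nd(A+B+C) = d \ge s + t$ when $s, t \ge 1$ (using scale-invariance of $\nd$), $\nd(s(A+B)) = \nd(A+B) \ge d - c \ge s$ when $t = 0$, and $\nd(tC) = c \ge t$ when $s = 0$; so all the hypotheses of Theorem \ref{intro thrm interPosi} are met. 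Hence $(A+B)^{d-c} \cdot C^c \ne 0$ in $H^{d,d}(X, \mathbb{R})$, and therefore $(A+B)^{d-c} \cdot C^c \cdot \omega^{n-d} > 0$. Expanding the factor $(A+B)^{d-c}$, there must exist a positive monomial $A^{i_0} B^{j_0} C^c \cdot \omega^{n-d}$ with $i_0 + j_0 = d - c$; the vanishing constraints from the first step then force $i_0 \le a - c$ and $j_0 \le b - c$, and summing gives $d - c \le (a - c) + (b - c)$, i.e., $d + c \le a + b$.

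The main obstacle is the second step: one must exhibit a nonzero monomial with $C$-exponent equal to the extremal value $c$, and nothing in the direct expansion of $(A+B+C)^d$ delivers this. The trick is to work instead with the auxiliary complete intersection $(A+B)^{d-c} \cdot C^c$, whose non-triviality is guaranteed by Theorem \ref{intro thrm interPosi} once the subadditivity $\nd(A+B) \ge d - c$ has been verified; both ingredients are themselves consequences of Lemma \ref{reverse KT}. The whole argument thus amounts to a two-fold invocation of the reverse Khovanskii-Teissier inequality, organised so that the first use produces the combinatorial vanishing constraints on monomials in $(A+B+C)^d$ and the second, routed through Theorem \ref{intro thrm interPosi}, forces the matching positivity at the boundary $k = c$.
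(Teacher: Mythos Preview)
Your proof is correct and takes a genuinely different route from the paper's. The paper argues \emph{directly} that $(A+B+C)^{a+b-c+1}=0$: fixing a very ample $\omega$ and an irreducible subvariety $V$ in the class $\omega^{n-(a+b-c+1)}$, it applies the reverse Khovanskii--Teissier inequality \emph{on $V$} with the interpolating class $D=\varepsilon\omega+C$ and shows that every monomial $A^{s_1} B^{s_2} C^{s_3}\cdot[V]$ with $s_1+s_2+s_3=a+b-c+1$ is bounded above by an expression of order $\varepsilon$, hence vanishes. You instead work at degree $d$: you extract the constraints $i+k\le a$, $j+k\le b$ on any nonvanishing monomial, and then invoke Theorem~\ref{intro thrm interPosi} on the family of $d-c$ copies of $A+B$ together with $c$ copies of $C$ to force a positive monomial with the extremal $C$-exponent $k=c$, whence $d+c\le a+b$. (Your justification ``scale-invariance of $\nd$'' for $\nd(s(A+B)+tC)=d$ is slightly loose; what is really used is that for $s,t>0$ the powers $(s(A+B)+tC)^m$ and $(A+B+C)^m$ have the same nonvanishing monomials when tested against $\omega^{n-m}$, which follows from nonnegativity of nef intersections.)

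Two further remarks. First, your attribution in the final paragraph is off: the vanishing constraints already follow from $(A+C)^{a+1}=0$ together with nonnegativity of nef intersections, with no appeal to reverse KT; and Theorem~\ref{intro thrm interPosi} in the paper is proved via the Hodge index theorem (Proposition~\ref{luotie}), not via Lemma~\ref{reverse KT}. Second, and more interestingly, your argument uses only ingredients valid on an arbitrary compact K\"ahler manifold (the elementary vanishing, subadditivity of $\nd$, and Theorem~\ref{intro thrm interPosi}), whereas the paper's asymptotic argument needs the honest subvariety $V$ and hence projectivity. Your proof therefore establishes the submodularity for nef classes on any compact K\"ahler manifold, confirming the expectation recorded in the remark following Theorem~\ref{rank} and subsuming the semi-positive special case of Proposition~\ref{rank on Kahler}.
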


\begin{proof}

In order to prove the inequality, it is sufficient to verify the following statement:
\begin{quote}
  Let $k, l, m$ be non-negative integers such that
   \begin{align*}
     &(A+C)^{k+1}=0,\\
     &(B+C)^{l+1}=0,\\
     &C^m \neq 0, C^{m+1}=0,
   \end{align*}
then $(A+B+C)^{k+l-m+1}=0$.
\end{quote}

To this end, it is equivalent to prove that for any triple of nonnegative integers $(s_1, s_2, s_3)$ satisfying $$s_1 + s_2 + s_3 = k+l-m+1,$$
we have that $$A^{s_1} \cdot B^{s_2} \cdot C^{s_3}=0.$$

Fix a very ample class $\omega$, and let $V$ be an irreducible subvariety in the cycle class $\omega^{n-(k+l-m+1)}$.

For $\varepsilon>0$, set $D=\varepsilon \omega+ C$.

By Lemma \ref{reverse KT}, we have that

\begin{align*}
 & A^{s_1} \cdot B^{s_2} \cdot C^{s_3} \cdot [V]\\
 &  \leq \frac{(k+l-m+1)!}{s_2 ! (k+l-m+1-s_2)!} \frac{(A^{s_1}  \cdot C^{s_3}\cdot D^{s_2} \cdot [V])(B^{s_2} \cdot D^{k+l-m+1-s_2} \cdot [V])}{D^{k+l-m+1} \cdot [V]}.
\end{align*}

For notational simplicity, in the following we use the notations $\simeq, \preceq$: for two real numbers $a, b$, $a\simeq b$ if there are two constants $c_1, c_2 >0$ such that $c_1 a \leq b \leq c_2 a$; $a \preceq b$ if there is a constant $c>0$ such that $a \leq c b$.

By the assumption $C^m \neq 0, C^{m+1}=0$,
\begin{equation*}
  D^{k+l-m+1} \cdot [V] \simeq \varepsilon^{k+l-2m+1} + ...
\end{equation*}
where $...$ is a term with higher power than $\varepsilon^{k+l-2m+1}$. Similarly, by the assumptions $$(A+C)^{k+1}=0,(B+C)^{l+1}=0,$$
we get that
\begin{align*}
   & A^{s_1}  \cdot C^{s_3}\cdot D^{s_2} \cdot [V] \preceq  \varepsilon^{l-m+1} + ...,\\
   & B^{s_2} \cdot D^{k+l-m+1-s_2} \cdot [V] \preceq \varepsilon^{k-m+1} + ....
\end{align*}

Putting the above estimates together implies that
\begin{equation*}
  A^{s_1} \cdot B^{s_2} \cdot C^{s_3} \cdot [V] \preceq \frac{\varepsilon^{k+l-2m+2}+...}{\varepsilon^{k+l-2m+1}+...},
\end{equation*}
which tends to zero as $\varepsilon\rightarrow 0$.
Therefore, $$A^{s_1} \cdot B^{s_2} \cdot C^{s_3} =0.$$

This finishes the proof.
\end{proof}

\begin{rmk}
We expect that Theorem \ref{rank} also holds for nef classes on an arbitrary compact K\"ahler manifold, this depends on a relative version of the reverse Khovanskii-Teissier inequalities by replacing the cycle class $[V]$ by a general complete intersection of transcendental K\"ahler classes.
\end{rmk}

\begin{rmk} \label{nd subadd}
By the definition of numerical dimension, for any two nef $(1,1)$ classes $A, B$, it is clear that
\begin{equation*}
  \nd(A+B)\leq \nd(A) +\nd(B).
\end{equation*}
This is a special case of Theorem \ref{rank} when $C=0$.

\end{rmk}

\begin{rmk} \label{nd equi estimate}
Notations as Theorem \ref{rank}, by the definition of numerical dimension we have that
\begin{equation*}
  \nd(A+B+C) \geq \max\{\nd(A+C), \nd(B+C)\},
\end{equation*}
which yields that
\begin{equation*}
  2(\nd(A+B+C)+\nd(C)) \geq \nd(A+C)+ \nd(B+C).
\end{equation*}
Therefore combining with Theorem \ref{rank},
\begin{equation*}
\frac{1}{2} (\nd(A+C)+ \nd(B+C))\leq  \nd(A+B+C)+\nd(C) \leq \nd(A+C)+ \nd(B+C).
\end{equation*}
\end{rmk}

\begin{cor} \label{polymatroid2}
Given any finite nonzero nef classes $B_1, ..., B_m$ on a complex projective manifold, which we identify the collection with the indices set $E=[m]$. For any $I\subset [m]$, set $r(I)=\nd(B_I)$ with the convention that $r(\emptyset)=0$. Then the function $r(\cdot)$ endows with $E$ a loopless polymatroid structure.

\end{cor}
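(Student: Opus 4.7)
The plan is to verify the three polymatroid axioms (normalization, monotonicity, submodularity) together with the loopless condition, with Theorem \ref{rank} doing essentially all the work.

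Normalization is built into the definition $r(\emptyset) = 0$, so nothing is required there. For the loopless property, I would observe that each $B_i$ is a nonzero nef class, and a nonzero nef class $\alpha$ satisfies $\alpha \cdot \omega^{n-1} > 0$ (where $\omega$ is a Kähler class), whence $\nd(B_i) \geq 1$. By the monotonicity step below, this forces $r(I) \geq 1$ for every nonempty $I$.

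For monotonicity, suppose $I \subset J$. Write $B_J = B_I + B_{J\setminus I}$, and fix a Kähler class $\omega$. Let $k = \nd(B_I)$, so $B_I^k \cdot \omega^{n-k} > 0$. Expanding $B_J^k = (B_I + B_{J\setminus I})^k$ binomially produces a sum of terms of the form $B_I^a \cdot B_{J\setminus I}^{k-a}$, each of which is an intersection of nef classes and hence has nonnegative pairing with $\omega^{n-k}$. Since one summand is $B_I^k$, we obtain $B_J^k \cdot \omega^{n-k} > 0$, giving $r(J) = \nd(B_J) \geq k = r(I)$.

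The heart of the argument is submodularity, and this is where Theorem \ref{rank} is applied directly. Given subsets $A_1, A_2 \subset [m]$, set
\begin{equation*}
A = B_{A_1 \setminus A_2}, \qquad B = B_{A_2 \setminus A_1}, \qquad C = B_{A_1 \cap A_2},
\end{equation*}
with the convention that an empty sum of classes equals $0$ (which is still nef). Then
\begin{equation*}
B_{A_1} = A + C,\quad B_{A_2} = B + C,\quad B_{A_1 \cup A_2} = A + B + C,\quad B_{A_1 \cap A_2} = C,
\end{equation*}
and Theorem \ref{rank} applied to the nef triple $(A,B,C)$ yields
\begin{equation*}
r(A_1 \cup A_2) + r(A_1 \cap A_2) = \nd(A+B+C) + \nd(C) \leq \nd(A+C) + \nd(B+C) = r(A_1) + r(A_2).
\end{equation*}
The degenerate cases (one of $A_1\setminus A_2$, $A_2\setminus A_1$, or $A_1 \cap A_2$ being empty) reduce either to the zero class in Theorem \ref{rank} or to tautological equalities, so no separate argument is needed. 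Since submodularity is the only nontrivial axiom and it is a direct corollary of Theorem \ref{rank}, the main obstacle has already been handled; the present statement is essentially packaging.
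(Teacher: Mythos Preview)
Your proof is correct and follows essentially the same approach as the paper: verify the polymatroid axioms directly, with Theorem \ref{rank} supplying submodularity. The only cosmetic difference is that the paper applies Theorem \ref{rank} to the triple $(B_{A_1}, B_{A_2}, B_{A_1\cap A_2})$ and uses that $\nd(B_{A_1}+B_{A_2}+B_{A_1\cap A_2})=\nd(B_{A_1\cup A_2})$, whereas your disjoint decomposition $A=B_{A_1\setminus A_2}$, $B=B_{A_2\setminus A_1}$, $C=B_{A_1\cap A_2}$ makes the identities $A+C=B_{A_1}$, etc., hold on the nose; both are equivalent and equally short.
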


\begin{proof}
For any nonempty $I, J\subset E$, it is clear that $$\nd(B_I + B_J+ B_{I \cap J}) = \nd (B_{I\cup J}).$$ Applying Theorem \ref{rank} to $B_I, B_J, B_{I \cap J}$ yields the submodularity:
\begin{equation*}
r(I\cup J) + r(I \cap J) \leq r(I)+ r(J).
\end{equation*}

It is obvious that for any $I\subset J \subset E$, $r(I) \leq r(J)$ -- this verifies the monotonicity.

For any nonempty $I\subset E$, $r(I)\neq 1$, which verifies the loopless.

This finishes the proof.

\end{proof}

Combining Theorem \ref{rank} with Corollary \ref{polymatroid2},
we finish the proof of Theorem \ref{polymatroid}.

\begin{rmk}
By Remark \ref{jiangli reverseKT}, it is clear that the same argument works on any smooth projective variety over an algebraically closed field, therefore the analogous result also holds in this setting.
\end{rmk}

Next we give an alternative approach to the submodularity of numerical dimension for semi-positive classes on an arbitrary compact K\"ahler manifold. This follows from the characterization of positivity of complete intersection of nef classes given by
Theorem \ref{intro thrm interPosi}.

\begin{prop}\label{rank on Kahler}
Let $X$ be a compact K\"ahler manifold of dimension $n$ and $A,B,C \in H^{1,1}(X,\mathbb{R})$ be nef classes with semi-positive representatives. Then we have that
  \begin{equation*}
    \nd(A+B+C) + \nd(C)\leq \nd(A+C)+\nd(B+C).
  \end{equation*}
\end{prop}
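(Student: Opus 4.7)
The plan is to reduce the inequality to two applications of Theorem \ref{intro thrm interPosi} via a carefully chosen auxiliary product. Write $a = \nd(A+C)$, $b = \nd(B+C)$, $c = \nd(C)$, $s = \nd(A+B+C)$, so the goal becomes $s + c \leq a + b$. Using the semi-positive representatives one observes that for any $\lambda_1, \lambda_2 > 0$ the form $\lambda_1 \widehat{A} + \lambda_2 \widehat{C}$ has pointwise kernel $\ker \widehat{A} \cap \ker \widehat{C}$, so $\nd(\lambda_1 A + \lambda_2 C) = a$; analogous identities hold for the pair $(B, C)$ and for any positive combination of $A + B + C$ with $C$. In particular $a, b, s \geq c$, and since the case $s = 0$ is trivial we assume $s \geq 1$.

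The first key step is to apply Theorem \ref{intro thrm interPosi} to the system of $s$ nef classes consisting of $s - c$ copies of $A + B + C$ together with $c$ copies of $C$. Any nonempty subset is specified by integers $l \leq s - c$ and $m \leq c$ counting the copies of each; the partial sum has numerical dimension $s$ when $l \geq 1$ and equals $c$ when $l = 0$, by the preliminary observation. Both bounds dominate $l + m$, so Theorem \ref{intro thrm interPosi} yields
\begin{equation*}
(A+B+C)^{s-c} \cdot C^{c} \neq 0 \quad \text{in } H^{s,s}(X, \mathbb{R}).
\end{equation*}

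The second step is to expand via the multinomial theorem
\begin{equation*}
(A+B+C)^{s-c} \cdot C^{c} = \sum_{i+j+k = s-c} \binom{s-c}{i,j,k}\, A^{i} \cdot B^{j} \cdot C^{k+c}
\end{equation*}
and to observe that each summand is represented by a wedge of semi-positive $(1,1)$ forms, so its pairing with $\omega^{n-s}$ for a K\"ahler class $\omega$ is non-negative. Since the total is strictly positive, some triple $(i_0, j_0, k_0)$ with $i_0 + j_0 + k_0 = s - c$ gives $A^{i_0} \cdot B^{j_0} \cdot C^{k_0+c} \neq 0$. Applying Theorem \ref{intro thrm interPosi} to this non-vanishing product, and selecting the subset of all $A$- and $C$-factors (when $i_0 \geq 1$), produces $a \geq i_0 + k_0 + c$, and symmetrically $b \geq j_0 + k_0 + c$ (when $j_0 \geq 1$); adding yields $a + b \geq s + k_0 + c \geq s + c$. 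The remaining degenerate cases are handled directly: if $i_0 = 0$ with $j_0 \geq 1$ then $b \geq s$ combined with $a \geq c$ suffices, while $i_0 = j_0 = 0$ forces $C^{s} \neq 0$, hence $s = c$ and $a + b \geq 2c$.

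The main obstacle I expect is the second step, namely passing from the non-vanishing of the whole sum to the non-vanishing of a single monomial. This relies on the fact that each monomial $A^{i} \cdot B^{j} \cdot C^{k+c}$ is a non-negative class in the sense that its pairing with $\omega^{n-s}$ is non-negative; since all summands in the multinomial expansion carry positive coefficients, the sum's non-vanishing forces at least one summand to be non-zero. The semi-positive representative hypothesis is essential here, as it is what guarantees that the requisite wedge products remain positive and that numerical dimensions can be read off pointwise kernels.
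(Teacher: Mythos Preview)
Your argument is correct, and it differs from the paper's proof in an interesting way. Both proofs begin similarly, applying Theorem~\ref{intro thrm interPosi} to obtain a nonvanishing product of the shape $C^{c}\cdot(\text{something})^{s-c}$; the paper uses $(A+B)^{s-c}$ while you use $(A+B+C)^{s-c}$, which is marginally simpler to check since $\nd(A+B+C)=s$ directly, whereas the paper needs the subadditivity $\nd(A+B)\geq s-c$. The real divergence is in the second half. The paper passes to a point $x$ where the semi-positive representative of the product is strictly positive, reads off exact positivity of $\widehat{C}(x)$ and $\widehat{A}(x)+\widehat{B}(x)+\widehat{C}(x)$, and then invokes the linear (pointwise) submodularity of $\codim\ker(-)$. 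You instead stay at the cohomological level: you expand multinomially, use nonnegativity of each term against $\omega^{n-s}$ to extract a nonzero monomial $A^{i_0}B^{j_0}C^{k_0+c}$, and feed this back into the necessity direction of Theorem~\ref{intro thrm interPosi}.

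What your approach buys is more than elegance: it does not actually use the semi-positive representatives. The two facts you invoke beyond Theorem~\ref{intro thrm interPosi} are (i) $\nd(\lambda_1\alpha+\lambda_2\beta)=\nd(\alpha+\beta)$ for $\lambda_1,\lambda_2>0$, and (ii) $A^{i}B^{j}C^{l}\cdot\omega^{n-s}\geq 0$. Both hold for arbitrary nef classes on a compact K\"ahler manifold, by expanding and using that intersection numbers of nef classes are limits of intersection numbers of K\"ahler classes. Your justification of (i) via pointwise kernels is valid but unnecessary; the cohomological argument suffices. Consequently your proof establishes the submodularity inequality for \emph{all} nef classes on a compact K\"ahler manifold, not only those with semi-positive representatives. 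This is precisely the extension of Theorem~\ref{rank} that the paper says it expects but does not prove (see the remark following Theorem~\ref{rank}), so you may want to state your result at that level of generality.
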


\begin{proof}
We may fix a semipositive representative in each class, denoted by $\widehat{A},\widehat{B},\widehat{C}$.
Fix a K\"ahler class $\omega$ on $X$, and denote a K\"ahler metric in $\omega$ by $\widehat{\omega}$.

By Theorem \ref{intro thrm interPosi},
  \begin{align*}
    &\int_{X}\widehat{C}^{\nd(C)}\wedge (\widehat{A}+\widehat{B})^{\nd(A+B+C)-\nd(C)} \wedge \widehat{\omega}^{n-\nd(\widehat{A}+\widehat{B}+\widehat{C})}\\
    &=C^{\nd(C)}\cdot (A+B)^{\nd(A+B+C)-\nd(C)} \cdot \omega^{n-\nd(A+B+C)}\\
    &>0.
  \end{align*}
By semi-positivity of the volume form
  \begin{equation*}
    \widehat{C}^{\nd(C)}\wedge (\widehat{A}+\widehat{B})^{\nd(A+B+C)-\nd(C)} \wedge \widehat{\omega}^{n-\nd(A+B+C)}
  \end{equation*}
at any point and positivity of the integration, there is some point $x$ such that
  \begin{align*}
    &\widehat{C}(x)^{\nd(C)}\wedge (\widehat{A}(x)+\widehat{B}(x))^{\nd(A+B+C)-\nd(C)} \wedge \widehat{\omega}(x)^{n-\nd(A+B+C)}\\
    &>0.
  \end{align*}

Then by Corollary \ref{interPos linear} or Theorem \ref{intro thrm interPosi},
\begin{itemize}
  \item $\widehat{C}(x)$ is $\nd(C)$ positive;
  \item $\widehat{A}(x)+\widehat{B}(x)+\widehat{C}(x)$ is $\nd(A+B+C)$ positive.
\end{itemize}

On the other hand, we claim that $\widehat{C}(x)$ is exactly $\nd(C)$ positive and $\widehat{A}(x)+\widehat{B}(x)+\widehat{C}(x)$ is exactly $\nd(A+B+C)$ positive.
If $\widehat{C}(x)$ is not exactly $\nd(C)$ positive, then we will have that
  \begin{equation*}
    \widehat{C}(x)^{\nd(C)+1}\wedge \widehat{\omega}(x)^{n-\nd(C)-1} >0.
  \end{equation*}
Then it follows that
  \begin{equation*}
    C^{\nd(C)+1}\cdot \omega^{n-\nd(C)-1}=\int_X \widehat{C}^{\nd(C)+1}\wedge \widehat{\omega}^{n-\nd(C)-1} >0
  \end{equation*}
by the semi-positivity of $\widehat{C}$. This contradicts to the definition of $\nd(C)$. By the same way, we get that $\widehat{A}(x)+\widehat{B}(x)+\widehat{C}(x)$ is exactly $\nd(A+B+C)$ positive.
Thus we have that
  \begin{align*}
    &\codim \ker(\widehat{A}(x)+\widehat{B}(x)+\widehat{C}(x)) =\nd(A+B+C), \\
    &\codim \ker(\widehat{C}(x)) = \nd(C).
  \end{align*}

Linear algebraic argument (or applying Theorem \ref{rank} to some abelian variety) implies that
  \begin{align*}
    &\codim \ker(\widehat{A}(x)+\widehat{B}(x)+\widehat{C}(x))+\codim \ker(\widehat{C}(x))\\
    &\leq \codim \ker(\widehat{A}(x)+\widehat{C}(x))+\codim \ker(\widehat{B}(x)+\widehat{C}(x)).
  \end{align*}
Again by semi-positivity, we have that
  \begin{align*}
    &\codim \ker(\widehat{A}(x)+\widehat{C}(x)) \leq \nd(A+C),\\
    &\codim \ker(\widehat{B}(x)+\widehat{C}(x)) \leq \nd(B+C).
  \end{align*}

Take all estimates above into account, we get that
  \begin{equation*}
    \nd(A+B+C) + \nd(C)\leq \nd(A+C)+\nd(B+C).
  \end{equation*}

This finishes the proof.
\end{proof}

Just as Theorem \ref{polymatroid}, the numerical dimension endows a finite set of semi-positive classes with a polymatroid structure.

\begin{cor}\label{polymatroidKahler}
  Let $X$ be a a compact K\"ahler manifold and let $\alpha_1,..., \alpha_m \in H^{1,1}(X,\mathbb{R})$ be nonzero semi-positive classes on $X$. Then
  \begin{equation*}
    r(I)=\nd(\alpha_I), \forall I \subset [m]
  \end{equation*}
  endows $[m]$ with a loopless polymatroid structure.
\end{cor}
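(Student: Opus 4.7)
The plan is to verify the three polymatroid axioms plus the loopless condition one by one. Submodularity is the only substantive axiom, and it will reduce directly to Proposition~\ref{rank on Kahler}. The remaining conditions follow from elementary positivity considerations, exactly paralleling the argument used for Corollary~\ref{polymatroid2}.

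For submodularity, given nonempty $I, J \subset [m]$, I would use the disjoint decomposition $I \cup J = (I \setminus J) \sqcup (J \setminus I) \sqcup (I \cap J)$ and set
\begin{equation*}
A = \alpha_{I \setminus J}, \quad B = \alpha_{J \setminus I}, \quad C = \alpha_{I \cap J},
\end{equation*}
with the convention that a sum indexed by the empty set vanishes. Each of these classes inherits a semi-positive representative obtained by summing the given representatives of the $\alpha_i$'s, so the hypotheses of Proposition~\ref{rank on Kahler} are met; if some of $A, B, C$ happen to be zero, the zero form is semi-positive and the statement degenerates to Remark~\ref{nd subadd}. Since $A+C = \alpha_I$, $B+C = \alpha_J$, and $A+B+C = \alpha_{I \cup J}$, the proposition yields precisely
\begin{equation*}
r(I \cup J) + r(I \cap J) \leq r(I) + r(J).
\end{equation*}

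The remaining axioms are straightforward. Normalization is the convention $r(\emptyset) = 0$. For monotonicity, if $I \subset J$ and $\beta := \alpha_{J \setminus I}$, expanding $\alpha_J^k = (\alpha_I + \beta)^k$ binomially and intersecting with $\omega^{n-k}$ writes $\alpha_J^k \cdot \omega^{n-k}$ as a sum of nonnegative intersections of nef classes, one of which is $\alpha_I^k \cdot \omega^{n-k}$; hence $\nd(\alpha_J) \geq \nd(\alpha_I)$. For the loopless property, for nonempty $I$ each representative $\widehat{\alpha}_i$ is a nonnegative real $(1,1)$-form that does not vanish identically (otherwise $\alpha_i$ would be the zero class), so $\int_X \widehat{\alpha}_i \wedge \omega^{n-1} > 0$ by pointwise semi-positivity; summing over $i \in I$ gives $\alpha_I \cdot \omega^{n-1} > 0$, so $r(I) \geq 1$. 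I do not foresee any real obstacle: all of the analytic content is packaged inside Proposition~\ref{rank on Kahler}, and what remains here is essentially set-theoretic bookkeeping and standard positivity of intersections.
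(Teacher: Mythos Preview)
Your proof is correct and follows essentially the same line as the paper, which simply invokes Proposition~\ref{rank on Kahler} together with the argument of Corollary~\ref{polymatroid2}. Your choice $A=\alpha_{I\setminus J}$, $B=\alpha_{J\setminus I}$, $C=\alpha_{I\cap J}$ is a slightly cleaner variant of the paper's $A=B_I$, $B=B_J$, $C=B_{I\cap J}$ (which needs the extra observation $\nd(B_I+B_{I\cap J})=\nd(B_I)$), but the content is identical.
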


\begin{proof}
  Apply Proposition \ref{rank on Kahler} and the same argument as Corollary \ref{polymatroid2}.
\end{proof}

\begin{rmk}
It is interesting to see whether the above polymatroids given by nef or semi-positive classes are algebraic (or even representable), see Example \ref{matroid exmpl}.

In the case when $\alpha_1,...,\alpha_m$ are given by semi-ample line bundles on a complex projective manifold $X$, the polymatroid is representable. We can assume that every $\alpha_k$ is given by a free line bundle $A_k$. Let $$\phi_k  : X \rightarrow \mathbb{P}_k = \mathbb{P}(H^0 (X, A_k))$$
be the Kodaira map defined by $A_k$, then we have a morphism
$$\phi= \phi_1 \times...\times \phi_m : X \rightarrow \mathbb{P}=\mathbb{P}_1 \times...\times \mathbb{P}_m.$$
Denote $Y=\phi (X) \subset \mathbb{P}$, then we have that $\dim \pi_I (Y) = \kappa (A_I) =\nd(A_I)$. By \cite{CCLMZ20}, $r(I)=\dim \pi_I (Y)$ endows the set $\alpha_1,...,\alpha_m$ with an algebraic polymatroid structure. Since over a field of characteristic zero any algebraic polymatroid is representable, the desired result holds. One such construction for matroids also appeared in \cite{huhwangEnumeration} where the projective manifold is a wonderful compactification and can be further generalized to polymatroids \cite{huhwangPolymatroid}.

However, in general one cannot reduce the nef case to the semi-ample case, since there exist nef $(1,1)$ classes which cannot be made to be semi-ample by any modifications.
\end{rmk}

Let $X\subset Y = Y_1 \times ...\times Y_m$ be in the setting of Definition \ref{defn kahler multidegree}, define \emph{the support of $X$ with with respect to $Y$} by
$$\MSupp _Y (X) = \{\mathbf{n}\in \mathbb{N}^m| \deg^{\mathbf{n}} _Y (X) >0\}.$$

\begin{cor}\label{support}
The support of $X$ with with respect to $Y$, $\MSupp _Y (X)$, is a discrete polymatroid.
\end{cor}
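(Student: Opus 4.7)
The plan is to exhibit the rank function explicitly and then quote the two main results of the section to see that $\MSupp_Y(X)$ coincides with the set of lattice points of the associated discrete polymatroid. For each $I \subset [m]$, set
\[
  r(I) := \dim \pi_I(X),
\]
with the convention $r(\emptyset) = 0$, and let $\alpha_i = (\pi_i^* \omega_i)_{|X} \in H^{1,1}(X,\mathbb{R})$. Since each $\omega_i$ is K\"ahler, each $\alpha_i$ admits a smooth semi-positive representative (the restriction to $X$ of the pulled-back K\"ahler form). The first step is to observe that $r(I) = \nd(\alpha_I)$ for every $I \subset [m]$: this extends the computation at the end of the proof of Theorem \ref{multidegree}, since $\alpha_I = \pi_I^*(\sum_{i\in I}\omega_i)_{|X}$ is the restriction of a pullback under the proper holomorphic map $\pi_I|_X \colon X \to \prod_{i\in I} Y_i$, whose generic rank is $\dim \pi_I(X)$, and the same Remmert-type argument yields the identification of $\nd(\alpha_I)$ with this generic rank.

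Next, I would verify that $r$ is a polymatroid rank function. Since the $\alpha_i$ are nonzero semi-positive classes on the compact K\"ahler manifold $X$, Corollary \ref{polymatroidKahler} applies directly and endows $[m]$ with a loopless polymatroid structure with rank function $r$. In particular submodularity, monotonicity, normalization, and looplessness all come for free.

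Finally I would combine this with Theorem \ref{multidegree}. Note that by definition $\mathbf{n} \in \MSupp_Y(X)$ requires $n_{[m]} = \dim X$, and that $r([m]) = \dim \pi_{[m]}(X) = \dim X$ since $\pi_{[m]}$ is the identity on $X$. By Theorem \ref{multidegree},
\[
  \mathbf{n} \in \MSupp_Y(X) \iff n_{[m]} = r([m]) \ \text{and}\ n_I \leq r(I)\ \forall I \subsetneq [m],
\]
which is precisely the description of the discrete polymatroid associated to $r$. Hence $\MSupp_Y(X)$ is a discrete polymatroid.

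There is no real obstacle here: both the nontrivial geometric input (the characterization of multidegree positivity via projection dimensions) and the combinatorial input (submodularity of numerical dimensions of semi-positive classes) have already been established, so the argument is a clean packaging of Theorem \ref{multidegree} with Corollary \ref{polymatroidKahler}. The only point requiring a line of care is the identification $r(I) = \nd(\alpha_I)$ for arbitrary $I$, which is a direct generalization of the singleton case already treated in the proof of Theorem \ref{multidegree}.
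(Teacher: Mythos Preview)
Your proposal is correct and follows essentially the same route as the paper: identify $r(I)=\dim\pi_I(X)=\nd(\alpha_I)$ via the argument in the proof of Theorem~\ref{multidegree}, invoke the submodularity of numerical dimension for semi-positive classes (the paper cites Proposition~\ref{rank on Kahler} directly, you cite its repackaging Corollary~\ref{polymatroidKahler}), and then match $\MSupp_Y(X)$ to the defining description of the discrete polymatroid with rank $r$. One small caveat: Corollary~\ref{polymatroidKahler} is stated for \emph{nonzero} semi-positive classes, so if some projection $\pi_i$ collapses $X$ to a point you lose looplessness; this does not affect the conclusion, since looplessness is not part of the definition of a discrete polymatroid, but you should either drop the word ``loopless'' or note that submodularity, monotonicity, and normalization survive regardless.
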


\begin{proof}
By Theorem \ref{multidegree}, we have that
  \begin{equation*}
    \MSupp _Y (X)=\{ \mathbf{n}\in \mathbb{N}^m | n_{[m]}=\dim X=n, \dim \pi_I(X)\geq n_I , \forall I \subsetneq [m] \}.
  \end{equation*}
  As we see in the proof of Theorem \ref{multidegree},
  \begin{equation*}
    \nd(\alpha_I)=\dim \pi_I(X).
  \end{equation*}

Here the $\alpha_I$ are semi-positive classes on $X$. Then by Proposition \ref{rank on Kahler},
  $$r(I)=\nd(\alpha_I)=\dim \pi_I(X)$$ is a rank function. It follows that $\MSupp _Y (X)$ is a discrete polymatroid with rank function $r(I)$ by the submodular theorem (see \cite{Submodular}).
\end{proof}

This extends related result of \cite{CCLMZ20} to the K\"ahler setting.
For a projective variety over an algebraically closed field, \cite{branhuhlorentz} also proved the result using the theory of Lorentzian polynomials.


Finally, in the linear setting we show that discrete polymatroids appear naturally in the HL property.

\begin{prop}\label{polym HL linear}
Let $X=\mathbb{C}^n / \Gamma$ be a compact complex torus of dimension $n$, and let $\alpha_1,...,\alpha_m$ be nef $(1,1)$ classes on $X$. Denote
\begin{equation*}
  M=\{\mathbf{n}=(n_1,...,n_m)\in \mathbb{N}^m | \sum_{i=1} ^m n_i =m,\ \alpha_1 ^{n_1} \cdot...\cdot \alpha_m ^{n_m}\ \text{has HL property.}\},
\end{equation*}
then $M$ is a discrete polymatroid with the rank function $r(I) = \nd(\alpha_I) - (n-m)$.
\end{prop}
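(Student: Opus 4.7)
The plan is to combine Theorem \ref{hardlef tori}, which controls HL on a torus in terms of numerical dimensions of partial sums, with the submodularity of $\nd$ from Proposition \ref{rank on Kahler}; both are available here because every nef class on a compact complex torus has a semi-positive constant-coefficient representative.

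First I would translate membership in $M$ into clean combinatorial inequalities. View $\Omega=\alpha_1^{n_1}\cdots \alpha_m^{n_m}$ as the complete intersection of the multiset containing $n_i$ copies of $\alpha_i$, which has total size $\sum n_i=m$. Theorem \ref{hardlef tori}, applied with $p+q=n-m$, says $\Omega$ has HL property iff every sub-multiset $I$---encoded by a tuple $(k_1,\ldots,k_m)$ with $0\leq k_i\leq n_i$---satisfies $\nd(\sum k_i\alpha_i)\geq (\sum k_i)+(n-m)$. The key simplification is that $\nd$ of a positive combination of nef classes depends only on the support: expanding $(\sum c_i\alpha_i)^k\cdot\omega^{n-k}$ as a nonnegative sum of nef intersection numbers shows this quantity is positive iff the same holds after replacing every nonzero $c_i$ by $1$, so $\nd(\sum_{i\in J} c_i\alpha_i)=\nd(\alpha_J)$ for $c_i>0$ with support $J$. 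The strongest constraint coming from a support $J\subset[m]$ is therefore obtained by taking $k_i=n_i$ for $i\in J$, namely $n_J\leq \nd(\alpha_J)-(n-m)=r(J)$. Hence $\mathbf{n}\in M$ iff $n_{[m]}=m$ and $n_J\leq r(J)$ for every non-empty $J\subset[m]$.

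Next I would verify that $r$ is a polymatroid rank function. Normalization $r(\emptyset)=0$ is by convention. Monotonicity follows from $\nd(\alpha_J)\geq\nd(\alpha_I)$ whenever $I\subset J$, since $\alpha_J-\alpha_I=\alpha_{J\setminus I}$ is a sum of nef classes, hence nef. For submodularity, I would apply Proposition \ref{rank on Kahler} to the three semi-positive classes $A=\alpha_{I\setminus J}$, $B=\alpha_{J\setminus I}$, $C=\alpha_{I\cap J}$, yielding $\nd(\alpha_{I\cup J})+\nd(\alpha_{I\cap J})\leq \nd(\alpha_I)+\nd(\alpha_J)$, i.e., $r(I\cup J)+r(I\cap J)\leq r(I)+r(J)$. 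Since $\nd\leq n$ forces $r([m])\leq m$, and a non-empty $M$ requires $r([m])\geq n_{[m]}=m$, we have $r([m])=m$ in the non-trivial case, so the condition $n_{[m]}=m$ coincides with $n_{[m]}=r([m])$, and $M$ is exactly the discrete polymatroid associated to $r$.

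I expect the main technical point to be the first step: correctly unpacking the multi-subset inequalities of Theorem \ref{hardlef tori} into the cleaner condition $n_J\leq r(J)$ indexed by ordinary subsets of $[m]$. This rests entirely on the support invariance of $\nd$ on positive combinations of nef classes; once it is in hand, the polymatroid structure is a direct consequence of the submodularity provided by Proposition \ref{rank on Kahler}.
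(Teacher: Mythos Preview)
Your proposal is correct and follows the same approach as the paper: reduce the HL condition via Theorem \ref{hardlef tori} to the inequalities $n_J\leq \nd(\alpha_J)-(n-m)$, then invoke the submodularity of $\nd$ (the paper cites Corollary \ref{polymatroidKahler}, which is the packaged form of the same Proposition \ref{rank on Kahler} argument you spell out). You are somewhat more explicit than the paper on two points---the reduction from sub-multisets to ordinary subsets via support-invariance of $\nd$, and the verification that $r([m])=m$ in the non-trivial case---but these are elaborations rather than a different route.
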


\begin{proof}
By Theorem \ref{hardlef tori}, if $\alpha_1 ^{n_1} \cdot...\cdot \alpha_m ^{n_m}$ has HL property, then we must have $$\nd(\alpha_k) \geq n-m+1.$$
Therefore, for any $I\subset [m]$, $$r(I) = \nd(\alpha_I) - (n-m) \in \mathbb{N}.$$

By Theorem \ref{hardlef tori} again, we have that
\begin{equation*}
  M=\{\mathbf{n}=(n_1,...,n_m)\in \mathbb{N}^m | \sum_{i=1} ^m n_i =m,\ r(I) \geq n_I, \forall I\subsetneq [m]\}.
\end{equation*}
On the other hand, by Corollary \ref{polymatroidKahler}, $r(\cdot)$ is a rank function on $[m]$.

Thus, $M$ is a discrete polymatroid with the rank function $r(I) = \nd(\alpha_I) - (n-m)$.

\end{proof}


\bibliography{reference}

\newcommand{\etalchar}[1]{$^{#1}$}
\providecommand{\bysame}{\leavevmode\hbox to3em{\hrulefill}\thinspace}
\providecommand{\MR}{\relax\ifhmode\unskip\space\fi MR }
\providecommand{\MRhref}[2]{%
  \href{http://www.ams.org/mathscinet-getitem?mr=#1}{#2}
}
\providecommand{\href}[2]{#2}
\begin{thebibliography}{CCRL{\etalchar{+}}20}

\bibitem[Ale38]{alexandroff1938theorie}
Alexander Alexandrov, \emph{Zur theorie der gemischten volumina von konvexen
  {K}\"orpern. {IV}. {D}ie gemischten {D}iskriminanten und die gemischten
  volumina}, Matematicheskii Sbornik \textbf{45} (1938), no.~2, 227--251.

\bibitem[Ale96]{alexandrovSelectedworksIconvexbodies}
A.~D. Alexandrov, \emph{Selected works. {P}art {I}}, Classics of Soviet
  Mathematics, vol.~4, Gordon and Breach Publishers, Amsterdam, 1996, Selected
  scientific papers, Translated from the Russian by P. S. V. Naidu, Edited and
  with a preface by Yu. G. Reshetnyak and S. S. Kutateladze. \MR{1629804}

\bibitem[Bad01]{badescuSurfacebook}
Lucian Badescu, \emph{Algebraic surfaces}, Universitext, Springer-Verlag, New
  York, 2001, Translated from the 1981 Romanian original by Vladimir Ma\c{s}ek
  and revised by the author. \MR{1805816}

\bibitem[BH20]{branhuhlorentz}
Petter Br\"{a}nd\'{e}n and June Huh, \emph{Lorentzian polynomials}, Ann. of
  Math. (2) \textbf{192} (2020), no.~3, 821--891. \MR{4172622}

\bibitem[Cat08]{cattanimixedHRR}
Eduardo Cattani, \emph{Mixed {L}efschetz theorems and {H}odge-{R}iemann
  bilinear relations}, Int. Math. Res. Not. IMRN (2008), no.~10, Art. ID
  rnn025, 20. \MR{2429243}

\bibitem[CCRL{\etalchar{+}}20]{CCLMZ20}
Federico Castillo, Yairon Cid-Ruiz, Binglin Li, Jonathan Monta\~{n}o, and
  Naizhen Zhang, \emph{When are multidegrees positive?}, Adv. Math.
  \textbf{374} (2020), 107382, 34. \MR{4157582}

\bibitem[CHL{\etalchar{+}}22]{huhwangPolymatroid}
Colin Crowley, June Huh, Matt Larson, Connor Simpson, and Botong Wang,
  \emph{The {B}ergman fan of a polymatroid}, arXiv:2207.08764 (2022).

\bibitem[CL22]{Submodular}
Federico Castillo and Fu~Liu, \emph{Deformation cones of nested braid fans},
  Int. Math. Res. Not. IMRN (2022), no.~3, 1973--2026. \MR{4373230}

\bibitem[dCM02]{decataldoLefsemismall}
Mark Andrea~A. de~Cataldo and Luca Migliorini, \emph{The hard {L}efschetz
  theorem and the topology of semismall maps}, Ann. Sci. \'{E}cole Norm. Sup.
  (4) \textbf{35} (2002), no.~5, 759--772. \MR{1951443}

\bibitem[DF22]{dangfarveBdivisor}
Nguyen-Bac Dang and Charles Favre, \emph{Intersection theory of nef
  {$b$}-divisor classes}, Compos. Math. \textbf{158} (2022), no.~7, 1563--1594.
  \MR{4476829}

\bibitem[DN06]{DN06}
Tien-Cuong Dinh and Vi{\^e}t-Anh Nguy{\^e}n, \emph{The mixed {H}odge-{R}iemann
  bilinear relations for compact {K}\"ahler manifolds}, Geom. Funct. Anal.
  \textbf{16} (2006), no.~4, 838--849.

\bibitem[DX21]{dangxiao-valuations}
Nguyen-Bac Dang and Jian Xiao, \emph{Positivity of valuations on convex bodies
  and invariant valuations by linear actions}, J. Geom. Anal. \textbf{31}
  (2021), no.~11, 10718--10777. \MR{4310154}

\bibitem[Gro90]{gromov1990convex}
Misha Gromov, \emph{Convex sets and {K}\"ahler manifolds}, Advances in
  Differential Geometry and Topology, ed. F. Tricerri, World Scientific,
  Singapore (1990), 1--38.

\bibitem[Huh12]{huhgraph}
June Huh, \emph{Milnor numbers of projective hypersurfaces and the chromatic
  polynomial of graphs}, J. Amer. Math. Soc. \textbf{25} (2012), no.~3,
  907--927. \MR{2904577}

\bibitem[Huh18]{huhICM2018}
\bysame, \emph{Combinatorial applications of the {H}odge-{R}iemann relations},
  Proceedings of the {I}nternational {C}ongress of {M}athematicians---{R}io de
  {J}aneiro 2018. {V}ol. {IV}. {I}nvited lectures, World Sci. Publ.,
  Hackensack, NJ, 2018, pp.~3093--3111. \MR{3966524}

\bibitem[Huh22]{huh2022combinatorics}
\bysame, \emph{Combinatorics and {H}odge theory}, Proceedings of the
  {I}nternational {C}ongress of {M}athematicians, 2022.

\bibitem[HW17]{huhwangEnumeration}
June Huh and Botong Wang, \emph{Enumeration of points, lines, planes, etc},
  Acta Math. \textbf{218} (2017), no.~2, 297--317. \MR{3733101}

\bibitem[IM75]{ingletonNonalgebraic}
A.~W. Ingleton and R.~A. Main, \emph{Non-algebraic matroids exist}, Bull.
  London Math. Soc. \textbf{7} (1975), 144--146. \MR{369110}

\bibitem[Ing71]{ingletonRepreMatro}
A.~W. Ingleton, \emph{Representation of matroids}, Combinatorial {M}athematics
  and its {A}pplications ({P}roc. {C}onf., {O}xford, 1969), Academic Press,
  London, 1971, pp.~149--167. \MR{0278974}

\bibitem[JL21]{jiang2021algebraicKT}
Chen Jiang and Zhiyuan Li, \emph{Algebraic reverse {K}hovanskii--{T}eissier
  inequality via okounkov bodies}, arXiv:2112.02847 (2021).

\bibitem[KK16]{KK16}
Kiumars Kaveh and A.~G. Khovanskii, \emph{Complete intersections in spherical
  varieties}, Selecta Math. (N.S.) \textbf{22} (2016), no.~4, 2099--2141.
  \MR{3573954}

\bibitem[Luo90]{luotieHodgeindex}
Tie Luo, \emph{A note on the {H}odge index theorem}, Manuscripta Math.
  \textbf{67} (1990), no.~1, 17--20. \MR{1037992}

\bibitem[LX16]{lehmxiao16convexity}
Brian Lehmann and Jian Xiao, \emph{Convexity and {Z}ariski decomposition
  structure}, Geom. Funct. Anal. \textbf{26} (2016), no.~4, 1135--1189.
  \MR{3558307}

\bibitem[LX17]{lehXiaoCorrespondences}
\bysame, \emph{Correspondences between convex geometry and complex geometry},
  \'{E}pijournal G\'{e}om. Alg\'{e}brique \textbf{1} (2017), Art. 6, 29.
  \MR{3743109}

\bibitem[Mor86]{moriwakiNefGood}
Atsushi Moriwaki, \emph{Relative {Z}ariski decomposition on higher-dimensional
  algebraic varieties}, Proc. Japan Acad. Ser. A Math. Sci. \textbf{62} (1986),
  no.~3, 108--111. \MR{846365}

\bibitem[Oxl11]{oxleyMatroidBook}
James Oxley, \emph{Matroid theory}, second ed., Oxford Graduate Texts in
  Mathematics, vol.~21, Oxford University Press, Oxford, 2011. \MR{2849819}

\bibitem[Pan85]{Panov1987ONSP}
A.~A. Panov, \emph{Some properties of mixed discriminants}, Mat. Sb. (N.S.)
  \textbf{128(170)} (1985), no.~3, 291--305, 446. \MR{815265}

\bibitem[RT19]{ross2019hodge}
Julius Ross and Matei Toma, \emph{Hodge-{R}iemann bilinear relations for
  {S}chur classes of ample vector bundles}, arXiv:1905.13636, to appear in Ann.
  Sci. \'{E}c. Norm. Sup\'{e}r. (2019).

\bibitem[RT21]{rossHR}
\bysame, \emph{On {H}odge-{R}iemann cohomology classes}, arXiv:2106.11285, to
  appear in Ann. Sci. \'{E}c. Norm. Sup\'{e}r. (2021).

\bibitem[RT22]{rossHRkahler}
\bysame, \emph{Hodge-{R}iemann relations for {S}chur classes in the linear and
  {K}\"ahler cases}, arXiv:2202.13816, to appear in IMRN (2022).

\bibitem[Sch14]{schneiderBrunnMbook}
Rolf Schneider, \emph{Convex bodies: the {B}runn-{M}inkowski theory}, expanded
  ed., Encyclopedia of Mathematics and its Applications, vol. 151, Cambridge
  University Press, Cambridge, 2014. \MR{3155183}

\bibitem[Tim98]{timorinMixedHRR}
V.~A. Timorin, \emph{Mixed {H}odge-{R}iemann bilinear relations in a linear
  context}, Funktsional. Anal. i Prilozhen. \textbf{32} (1998), no.~4, 63--68,
  96. \MR{1678857}

\bibitem[Tru01]{trungposimixed}
Ng\^{o}~Vi\^{e}t Trung, \emph{Positivity of mixed multiplicities}, Math. Ann.
  \textbf{319} (2001), no.~1, 33--63. \MR{1812818}

\bibitem[Wil18]{williamECM}
Geordie Williamson, \emph{The {H}odge theory of the {H}ecke category}, European
  {C}ongress of {M}athematics, Eur. Math. Soc., Z\"{u}rich, 2018, pp.~663--683.
  \MR{3890447}

\bibitem[Xia15]{XiaoWeakMorse}
Jian Xiao, \emph{Weak transcendental holomorphic {M}orse inequalities on
  compact {K}\"{a}hler manifolds}, Ann. Inst. Fourier (Grenoble) \textbf{65}
  (2015), no.~3, 1367--1379. \MR{3449182}

\bibitem[Xia21a]{xiaoHodgeIndex}
\bysame, \emph{Hodge-index type inequalities, hyperbolic polynomials, and
  complex {H}essian equations}, Int. Math. Res. Not. IMRN (2021), no.~15,
  11652--11669. \MR{4294129}

\bibitem[Xia21b]{xiaomixedHRR}
\bysame, \emph{Mixed {H}odge-{R}iemann bilinear relations and
  {$m$}-positivity}, Sci. China Math. \textbf{64} (2021), no.~7, 1703--1714.
  \MR{4280377}

\bibitem[Zha21]{zhangteiss}
Yashan Zhang, \emph{Teissier problem for nef classes}, arXiv:2108.13678 (2021).

\bibitem[Zha22]{zhangHRR}
\bysame, \emph{On higher-rank {K}hovanskii-{T}eissier inequalities}, J. Funct.
  Anal. \textbf{282} (2022), no.~1, Paper No. 109264, 24. \MR{4325837}

\end{thebibliography}
\bibliographystyle{amsalpha}

\bigskip

\bigskip

\noindent
\textsc{Tsinghua University, Beijing 100084, China}\\
\noindent
\verb"Email: hujj22@mails.tsinghua.edu.cn"\\
\noindent
\verb"Email: jianxiao@tsinghua.edu.cn"

\end{document}